\newcommand*{\rom}[1]{\expandafter\@slowromancap\romannumeral #1@}
  \theoremstyle{plain}
    \newtheorem{thm}{Theorem}[section]
    \newtheorem{proposition}[thm]{Proposition}
   \newtheorem{lemma}[thm]{Lemma}
    \newtheorem{corollary}[thm]{Corollary}
    \newtheorem{subsec}[thm]{}
\theoremstyle{definition}
    \newtheorem{definition}[thm]{Definition}
        \newtheorem{remark}[thm]{Remark}
    \newtheorem{exam}[thm]{Example}
\theoremstyle{remark}
\title{}
\author{}
\date{}
\begin{document}

\title[Cup product, Fr\"{o}licher-Nijenhuis bracket and the derived bracket]{Cup product, Fr\"{o}licher-Nijenhuis bracket and the derived bracket associated to Hom-Lie algebras}

\author{Anusuiya Baishya}
\address{Department of Mathematics,
Indian Institute of Technology, Kharagpur 721302, West Bengal, India.}
\email{anusuiyabaishya530@gmail.com}

\author{Apurba Das}
\address{Department of Mathematics,
Indian Institute of Technology, Kharagpur 721302, West Bengal, India.}
\email{apurbadas348@gmail.com, apurbadas348@maths.iitkgp.ac.in}


\begin{abstract}
In this paper, we introduce some new graded Lie algebras associated with a Hom-Lie algebra. At first, we define the cup product bracket and its application to the deformation theory of Hom-Lie algebra morphisms. We observe an action of the well-known Hom-analogue of the Nijenhuis-Richardson graded Lie algebra on the cup product graded Lie algebra. Using the corresponding semidirect product, we define the Fr\"{o}licher-Nijenhuis bracket and study its application to Nijenhuis operators. We show that the Nijenhuis-Richardson graded Lie algebra and the Fr\"{o}licher-Nijenhuis algebra constitute a matched pair of graded Lie algebras. Finally, we define another graded Lie bracket, called the derived bracket that is useful to study Rota-Baxter operators on Hom-Lie algebras.
\end{abstract}

\maketitle


\medskip

\medskip
\begin{center}

    {\em 2020 MSC classification.} 17B61, 17B70, 17B56.
    
    {\em Keywords.} Hom-Lie algebras, Nijenhuis-Richardson bracket, Cup product, Fr\"{o}licher-Nijenhuis bracket, Derived bracket.

    \end{center}

\noindent

\thispagestyle{empty}

\tableofcontents

\vspace{0.2cm}

\section{Introduction}\label{sec1}
 The classical Nijenhuis-Richardson bracket plays a prominent role in the Lie algebra theory as it determines Lie algebra structures on a given vector space as Maurer-Cartan elements. It has been shown in \cite{nij-ric-1,nij-ric-2} that the Nijenhuis-Richardson bracket is useful to show that the obstruction for extending a finite order deformation of a Lie algebra is a third cocycle. Further, given a Lie algebra $\mathfrak{g}$, the Chevalley-Eilenberg differential of the Lie algebra $\mathfrak{g}$ is a graded derivation for the Nijenhuis-Richardson bracket. Another well-known operation on the Chevalley-Eilenberg cochain complex of a Lie algebra is the cup product operation \cite{nij-ric-mor}. This cup product is a generalization of the Gerstenhaber's cup product \cite{gers-ring} and it plays an important role in the study of deformations of Lie algebra homomorphisms. The Chevalley-Eilenberg differential of the Lie algebra is also a graded derivation for the cup product operation. It is further well-known that the induced cup product operation at the level of Chevalley-Eilenberg cohomology is trivial. Although the Nijenhuis-Richardson bracket and the cup product operation both provide rich structures, the relationship between these two operations is not very well-known. See \cite{yang,nij-ric-1,nij-ric-mor} for some results.

\medskip

On the other hand, the notion of Hom-Lie algebras was introduced by Hartwig, Larsson and Silvestrov \cite{hls} in the $q$-deformations of Witt and Virasoro Lie algebras. In a Hom-Lie algebra, the skew-symmetric bilinear bracket does not generally satisfy the Jacobi identity. However, it satisfies a generalization of the Jacobi identity twisted by a linear map, called the Hom-Jacobi identity. When the twist map is identity, one recovers a classical Lie algebra. It has been observed in \cite{hls} that some $q$-deformations of Witt and Virasoro algebras have the structure of a Hom-Lie algebra. A general study and various constructions of Hom-Lie algebras were given in \cite{ls1,ls2,am1,jin-li}. In the last twenty years, Hom-Lie algebras have been paid much attention because of their close relationship with discrete and deformed vector fields \cite{ls1,ls2,ls3}. Among others, homology theory \cite{yau}, cohomology and deformation theory \cite{amm-ej-makh,makh-sil-2}, representations \cite{sheng} and enveloping algebras of Hom-Lie algebras \cite{gengoux} are extensively studied. In \cite{benayadi}, the authors also considered quadratic Hom-Lie algebras that allow one to study Manin triples of Hom-Lie algebras and hence Hom-Lie bialgebras \cite{sheng-bai}.

\medskip

In \cite{amm-ej-makh} the authors considered a generalization of the classical Nijenhuis-Richardson bracket on the space of all skew-symmetric multilinear maps on a given vector space that commute with a fixed twist map $\alpha$. This bracket, which is also referred as the Nijenhuis-Richardson bracket satisfies the graded Jacobi identity. The Maurer-Cartan elements of this graded Lie bracket correspond to multiplicative Hom-Lie algebra structures on the given vector space with the twist map $\alpha$. This Nijenhuis-Richardson bracket plays a crucial role in cohomology and deformations of Hom-Lie algebras in the same way the classical Nijenhuis-Richardson bracket plays for Lie algebras.

\medskip

In the present paper, we initiate our study by introducing the cup product operation $[~,~]_\mathsf{C}$ (also called the cup product bracket) on the graded space $C^\bullet_\mathrm{Hom} (\mathfrak{g}, \mathfrak{g})$ of cochains of a Hom-Lie algebra $\mathfrak{g}$. We show that it defines a graded Lie algebra structure. This cup product bracket endowed with a suitable differential makes a differential graded Lie algebra that controls the deformations of Hom-Lie algebra morphisms. We find some other descriptions of the cup product bracket that are also useful in the course of our study. Among others, we show that the induced cup product operation at the level of the cohomology is trivial. This generalizes the similar result from the Lie algebra context. Next, given a Hom-Lie algebra, we show that the corresponding Nijenhuis-Richardson bracket acts on the cup product Lie bracket by graded derivations.  Thus, we obtain the semidirect product graded Lie algebra structure on their direct sum. As a byproduct, we define a bracket $[~,~]_\mathsf{FN} : C^\bullet_\mathrm{Hom} (\mathfrak{g}, \mathfrak{g}) \times C^\bullet_\mathrm{Hom} (\mathfrak{g}, \mathfrak{g}) \rightarrow C^\bullet_\mathrm{Hom} (\mathfrak{g}, \mathfrak{g})$ by
\begin{align*}
    [P, Q]_\mathsf{FN} := [P, Q]_\mathsf{C} + (-1)^m ~ i_{(\delta_\mathrm{Hom} P)}Q  - (-1)^{(m+1) n} ~ i_{(\delta_\mathrm{Hom} Q)} P, 
\end{align*}
for $P \in C^m_\mathrm{Hom} (\mathfrak{g}, \mathfrak{g})$ and $Q \in C^n_\mathrm{Hom} (\mathfrak{g}, \mathfrak{g})$. Here $\delta_\mathrm{Hom}$ is the Hom-Lie algebra differential. This bracket turns out to be a graded Lie bracket, called the Fr\"{o}licher-Nijenhuis bracket and the graded Lie algebra $( C^\bullet_\mathrm{Hom} ( \mathfrak{g}, \mathfrak{g}), [~,~]_\mathsf{FN})$ is called the Fr\"{o}licher-Nijenhuis algebra associated with the Hom-Lie algebra $\mathfrak{g}$. 
Like the Maurer-Cartan characterization of Nijenhuis operators in classical geometry \cite{fro-nij-1,fro-nij-2}, here we observe that a linear map $N: \mathfrak{g} \rightarrow \mathfrak{g}$ satisfying $\alpha \circ N = N \circ \alpha$ is a Nijenhuis operator on the Hom-Lie algebra if and only if $[N, N]_\mathsf{FN} = 0$.
It follows that the Fr\"{o}licher-Nijenhuis bracket characterizes Nijenhuis operators as Maurer-Cartan elements. 
We also observed that the Nijenhuis-Richardson graded Lie algebra and the Fr\"{o}licher-Nijenhuis algebra forms a matched pair of graded Lie algebras.

\medskip

In the last few years, Rota-Baxter operators on Lie algebras have been extensively studied due to their connections with pre-Lie algebras, post-Lie algebras, classical Yang-Baxter equation and integrable systems \cite{bai-guo-ni,kuper}. The Maurer-Cartan characterization and cohomology of Rota-Baxter operators of weight $0$ and nonzero weight are respectively studied in \cite{tang} and \cite{das-weighted}. The weight $0$ case was generalized to the context of Hom-Lie algebras in \cite{mishra-naolekar}. Namely, the authors first construct a graded Lie algebra (generalizing the graded Lie algebra obtained in \cite{tang}) associated with a Hom-Lie algebra and show that its Maurer-Cartan elements are Rota-Baxter operators of weight $0$. Since their bracket is obtained by Voronov's derived bracket construction, we call it a derived bracket. As a consequence of our semidirect product graded Lie algebra constructed earlier and by another byproduct, we can define a new bracket $[~,~]_\mathsf{D}: C^\bullet_\mathrm{Hom} (\mathfrak{g}, \mathfrak{g}) \times C^\bullet_\mathrm{Hom} (\mathfrak{g}, \mathfrak{g}) \rightarrow C^\bullet_\mathrm{Hom} (\mathfrak{g}, \mathfrak{g})$ by
\begin{align}
[P,Q]_{\mathsf{D}} : = [P, Q ]_{\mathsf{C}} +i_{\theta P} Q - (-1)^{mn} i_{\theta Q} P,
\end{align}
for $P \in C^m_\mathrm{Hom} (\mathfrak{g}, \mathfrak{g})$ and $Q \in C^n_\mathrm{Hom} (\mathfrak{g}, \mathfrak{g})$. Here $\theta$ is some suitable map in the context. This bracket is also a graded Lie bracket. An explicit description of this bracket shows that it coincides with the derived bracket considered in \cite{mishra-naolekar} (hence in the Lie algebra context, it coincides with the bracket given in \cite{tang}). In other words, the derived bracket can be expressed in terms of the cup product bracket and the contraction operator $i$.
For this reason, we refer $[~,~]_\mathsf{D}$ as the derived bracket and the graded Lie algebra $(C^\bullet_\mathrm{Hom} (\mathfrak{g}, \mathfrak{g}), [~,~]_\mathsf{D})$ is called the derived algebra associated with the Hom-Lie algebra $\mathfrak{g}$. Our description of the bracket is useful to connect it with the Nijenhuis-Richardson algebra by a graded Lie algebra morphism. Given any scalar $\lambda \in {\bf k}$, we also define a differential $d_\lambda$  that makes the triple $( C^\bullet_\mathrm{Hom} (\mathfrak{g}, \mathfrak{g}) , [~,~]_\mathsf{D}, d_\lambda)$ into a differential graded Lie algebra. We show that a linear map $R: \mathfrak{g} \rightarrow \mathfrak{g}$ satisfying $\alpha \circ R = R \circ \alpha$ is a Rota-Baxter operator of weight $\lambda$ on the Hom-Lie algebra $\mathfrak{g}$ if and only if 
\begin{align*}
    d_\lambda R + \frac{1}{2} [R,R]_\mathsf{D} = 0.
\end{align*}
In particular, $R$ is a Rota-Baxter operator of weight $0$ if and only if $[R,R]_\mathsf{D} = 0$. Therefore, we obtain Maurer-Cartan characterization of Rota-Baxter operators. Finally, we generalize the above differential graded Lie algebra to study relative Rota-Baxter operators of any weight $\lambda$. Among others, we define the cohomology of a relative Rota-Baxter operator of weight $\lambda$ and interpret it as the cohomology of a suitable Hom-Lie algebra.


\medskip

\noindent {\bf Organization of the paper:} The paper is organized as follows. In Section \ref{sec2}, we recall Hom-Lie algebras, their cochains and the (Hom-analogue of the classical) Nijenhuis-Richardson bracket. In Section \ref{sec3}, we first define the cup product bracket and a suitable differential that makes a differential graded Lie algebra. In terms of this differential graded Lie algebra, we study cohomology and deformations of a Hom-Lie algebra morphism. We begin Section \ref{sec4} by defining an action of the Nijenhuis-Richardson graded Lie algebra on the cup product algebra. Then we define the Fr\"{o}licher-Nijenhuis bracket and study Nijenhuis operators. We also show that the Nijenhuis-Richardson algebra and the Fr\"{o}licher-Nijenhuis algebra forms a matched pair of graded Lie algebras. Finally, in Section \ref{sec5}, we define the differential graded Lie algebra $( C^\bullet_\mathrm{Hom} (\mathfrak{g}, \mathfrak{g}) , [~,~]_\mathsf{D}, d_\lambda)$ and a generalization of it to study (relative) Rota-Baxter operators of any weight $\lambda$.

\medskip

\noindent {\bf Notations:} All vector spaces, (multi)linear maps, tensor and wedge products are over a field ${\bf k}$ of characteristic $0$ unless specified otherwise. Let $(\mathcal{L} = \oplus_{i \in \mathbb{Z}} \mathcal{L}^i, [~,~], d)$ be a differential graded Lie algebra. A degree $1$ element $s \in \mathcal{L}^1$ is called a {\em Maurer-Cartan element} of the differential graded Lie algebra $\mathcal{L}$ if it satisfies
\begin{align*}
    d s + \frac{1}{2} [s, s] = 0.
\end{align*}
A permutation $\sigma \in S_{n_1 + \cdots + n_k}$ is called a $(n_1 , \ldots, n_k )$-shuffle if
\begin{align*}
    \sigma (1) < \cdots < \sigma (n_1), ~ \sigma (n_1 + 1) < \cdots < \sigma (n_1 + n_2),~ \ldots, \sigma (n_1 + \cdots + n_{k-1} + 1) < \cdots < \sigma (n_1 + \cdots + n_k).
\end{align*}
We denote the set of all $(n_1, \ldots, n_k)$-shuffles by $Sh (n_1, \ldots, n_k).$

\medskip

\section{Hom-Lie algebras and the Nijenhuis-Richardson bracket}\label{sec2}
In this section, we first recall Hom-Lie algebras and their associated cochain complex. Then the Hom-analogue of the Nijenhuis-Richardson bracket is also recalled whose Maurer-Cartan elements are precisely multiplicative Hom-Lie algebra structures. Our main references are \cite{amm-ej-makh,hls,makh-sil-2,sheng}.

\begin{definition}
    A {\bf Hom-Lie algebra} is a triple $(\mathfrak{g}, [~,~], \alpha)$ consisting of a vector space $\mathfrak{g}$ endowed with a skew-symmetric bilinear map (called the Hom-Lie bracket) $[~,~]: \mathfrak{g} \times \mathfrak{g} \rightarrow \mathfrak{g} $ and a linear map (called the twist map) $\alpha: \mathfrak{g} \rightarrow \mathfrak{g}$ that satisfy
\begin{align}\label{hj-iden}
 [\alpha (x) , [ y, z]] + [\alpha (y), [z,x]] + [\alpha (z), [x, y]] = 0,~~ \text{ for } x, y, z \in \mathfrak{g}.
\end{align} 
\end{definition}

The identity (\ref{hj-iden}) is called the {\em Hom-Jacobi identity}. A Hom-Lie algebra $(\mathfrak{g}, [~,~], \alpha)$ with $\alpha = \mathrm{id}_\mathfrak{g}$ is nothing but a Lie algebra. Thus, the class of Lie algebras is a subclass of the class of Hom-Lie algebras.


\medskip

Let $(\mathfrak{g}, [~,~]_\mathfrak{g}, \alpha)$ and $(\mathfrak{h}, [~,~]_\mathfrak{h}, \beta)$ be two Hom-Lie algebras. A {\bf morphism} of Hom-Lie algebras from $\mathfrak{g}$ to $\mathfrak{h}$ is a linear map $\varphi : \mathfrak{g} \rightarrow \mathfrak{h}$ satisfying $\beta \circ  \varphi = \varphi \circ \alpha $ and $\varphi ([x, y]_\mathfrak{g}) = [\varphi (x) , \varphi (y)]_\mathfrak{h}$, for $x,y \in \mathfrak{g}$.

\begin{exam}
Let $(\mathfrak{g}, [~,~])$ be a Lie algebra and $\alpha : \mathfrak{g} \rightarrow \mathfrak{g}$ be a Lie algebra homomorphism. Then the triple $(\mathfrak{g}, \alpha \circ [~,~], \alpha)$ is a Hom-Lie algebra.
\end{exam}


\begin{exam}
    Let $\mathfrak{g}$ be a $3$-dimensional vector space with a basis $\{ e_1, e_2, e_3 \}$. Define a bilinear skew-symmetric bracket $[~,~] : \mathfrak{g} \times \mathfrak{g} \rightarrow \mathfrak{g}$ and a linear map $\alpha : \mathfrak{g} \rightarrow \mathfrak{g}$ by 
    \begin{align*}
        [e_1, e_2] =~& a e_1 + b e_3, \quad [e_1, e_3] = c e_2, \quad [e_2, e_3] = de_1 + 2a e_3, \\
        &\alpha (e_1) = e_1, \quad \alpha (e_2) = 2 e_2, \quad \alpha (e_3) = 2 e_3,
    \end{align*}
    where $a, b , c , d \in {\bf k}$ are fixed scalars.
    Then $(\mathfrak{g}, [~,~], \alpha)$ is a Hom-Lie algebra.
\end{exam}

\begin{exam}
Let $(A, \mu, \alpha)$ be a Hom-associative algebra \cite{makh-sil-2}. That is, $A$ is a vector space, $\mu : A \times A \rightarrow A, ~ (a , b) \mapsto a \cdot b$ is a bilinear map and $\alpha : A \rightarrow A$ is a linear map satisfying
\begin{align*}
\alpha (a) \cdot (b \cdot c) = (a \cdot b) \cdot \alpha (c), ~ \text{ for } a, b, c \in A.
\end{align*}
If $(A, \mu, \alpha)$ is a Hom-associative algebra, then the triple $(A, [~,~], \alpha)$ is a Hom-Lie algebra, where $[a, b ] := a \cdot b - b \cdot a$, for $a, b \in A$. This is called the commutator Hom-Lie algebra of the Hom-associative algebra $(A, \mu, \alpha).$
\end{exam}

\begin{exam}\label{jack} (Jackson $\mathfrak{sl}_2$) Let $\mathfrak{g}$ be a $3$-dimensional vector space with a basis $\{ e, h, f \}$. For each parameter $q \in {\bf k}$, we define a bilinear skew-symmetric bracket $[~,~] : \mathfrak{g} \times \mathfrak{g} \rightarrow \mathfrak{g}$ and a linear map $\alpha : \mathfrak{g} \rightarrow \mathfrak{g}$ by 
\begin{align*}
    [h, e ] &= 2 e,  \quad [e, f ] = \frac{1+q}{2} h , \quad [h, f ] = - 2 q f, \\
    &\alpha (e) = q e , \quad \alpha (h) = q h, \quad \alpha (f) = q^2 f.
\end{align*}
Then $(\mathfrak{g}, [~,~], \alpha)$ is a Hom-Lie algebra, called the Jackson $\mathfrak{sl}_2$.

    Note that Jackson $\mathfrak{sl}_2$ is a $q$-deformation of the classical $\mathfrak{sl}_2$. In particular, if $q= 1$, we recover the classical $\mathfrak{sl}_2$.
\end{exam}

A Hom-Lie algebra $(\mathfrak{g}, [~,~], \alpha)$ is said to be {\bf multiplicative} if
\begin{align*}
    \alpha ([x, y]) = [\alpha (x), \alpha (y)], \text{ for all } x, y \in \mathfrak{g}.
\end{align*}
A morphism of multiplicative Hom-Lie algebras from $(\mathfrak{g}, [~,~]_\mathfrak{g}, \alpha)$ to $(\mathfrak{h}, [~,~]_\mathfrak{h}, \beta)$ is a linear map $\varphi : \mathfrak{g} \rightarrow \mathfrak{h}$ that is a Hom-Lie algebra morphism (i.e. $\varphi ([x, y]_\mathfrak{g}) = [\varphi (x), \varphi(y) ]_\mathfrak{h}$, for $x, y \in \mathfrak{g}$) and satisfying $\beta \circ \varphi = \varphi \circ \alpha$.

\medskip

\noindent \underline{\bf Note :} In this paper, we are mainly interested in multiplicative Hom-Lie algebras and our results are valid only for them. Thus, from now onward, by a Hom-Lie algebra (resp. a morphism of Hom-Lie algebras), we shall always assume a multiplicative Hom-Lie algebra (resp. a morphism of multiplicative Hom-Lie algebras) unless specified otherwise.

\begin{definition} Let $(\mathfrak{g}, [~,~], \alpha)$ be a Hom-Lie algebra. A {\bf representation} of $\mathfrak{g}$ consists of a vector space $V$ together with a bilinear map $\diamond : \mathfrak{g} \times V \rightarrow V$ and a linear map  $\beta : V \rightarrow V$ satisfying $\beta (x \diamond v) = \alpha (x) \diamond \beta (v)$ and
\begin{align*}
[x, y]  \diamond \beta (v) = \alpha (x) \diamond ( y \diamond v) - \alpha (y) \diamond (x \diamond v),~ \text{ for } x, y \in \mathfrak{g}, v \in V.
\end{align*}
\end{definition}

A representation as above may be denoted by the triple  $(V, \diamond, \beta)$ or simply by $V$ when no confusion arises. It follows from the above definition that any Hom-Lie algebra $(\mathfrak{g}, [~,~], \alpha)$ can be regarded as a representation $(\mathfrak{g}, \diamond = [~,~], \alpha)$ of itself, called the {\em adjoint representation}.

Next, we recall the cohomology of a Hom-Lie algebra $(\mathfrak{g}, [~,~], \alpha)$ with coefficients in a given representation $(V, \diamond, \beta)$. For each $n \geq 0$, the $n$-th cochain group $C^n_{\mathrm{Hom}} (\mathfrak{g}, V)$ is given by
\begin{align*}
C^0_{\mathrm{Hom}} (\mathfrak{g}, V) = \{ v \in V ~|~ \beta (v) = v \} ~~~ \text{ and } ~~~ C^{n \geq 1}_{\mathrm{Hom}} (\mathfrak{g}, V) = \{ f : \wedge^n \mathfrak{g} \rightarrow V ~|~ \beta \circ f = f \circ \alpha^{\wedge^n} \}.
\end{align*}
The differential $\delta_{\mathrm{Hom}} : C^n_{\mathrm{Hom}} (\mathfrak{g}, V) \rightarrow C^{n+1}_{\mathrm{Hom}} (\mathfrak{g}, V)$, for $n \geq 0$, is given by
\begin{align}
\delta_{\mathrm{Hom}} (v)(x) =~& x \diamond v,\\
(\delta_{\mathrm{Hom}} f)(x_1, \ldots, x_{n+1}) =~& \sum_{i=1}^{n+1} (-1)^{i+1} ~  \alpha^{n-1} (x_i) \diamond f (x_1, \ldots, \widehat{x_i}, \ldots, x_{n+1}) \label{hom-lie-coho}\\
~&+ \sum_{1 \leq i < j \leq n+1} (-1)^{i+j} ~ f ( [x_i, x_j], \alpha (x_1), \ldots, \widehat{\alpha (x_i)}, \ldots, \widehat{\alpha (x_j)}, \ldots, \alpha (x_{n+1}) ), \nonumber
\end{align}
for $v \in C^0_\mathrm{Hom} (\mathfrak{g}, V)$, $f \in C^{n \geq 1}_{\mathrm{Hom}} (\mathfrak{g}, V)$ and $x, x_1, \ldots, x_{n+1}\in \mathfrak{g}$.
The corresponding cohomology groups are called the {\em cohomology} of the Hom-Lie algebra $(\mathfrak{g}, [~,~], \alpha)$ with coefficients in $(V, \diamond, \beta)$, and they are denoted by $H^\bullet_{\mathrm{Hom}} (\mathfrak{g}, V)$. 

When $\mathfrak{g}$ is a Lie algebra and $V$ is a Lie algebra representation (that is, when $\alpha = \mathrm{id}_\mathfrak{g}$ and $\beta = \mathrm{id}_V$), one recovers the classical Chevalley-Eilenberg cohomology of the Lie algebra.


\medskip

\noindent {\bf Nijenhuis-Richardson bracket.}
Let $\mathfrak{g}$ be a vector space and $\alpha: \mathfrak{g} \rightarrow \mathfrak{g}$ be a linear map. (Note that $\mathfrak{g}$ need not be a Hom-Lie algebra.) For each $n \geq 1$, we define 
\begin{align*}
     C^{n}_{\mathrm{Hom}} (\mathfrak{g}, \mathfrak{g}) = \{ f : \wedge^n \mathfrak{g} \rightarrow \mathfrak{g} ~|~ \alpha \circ f = f \circ \alpha^{\wedge^n} \}.
\end{align*}
It has been shown in \cite{amm-ej-makh} that the graded space $C^\bullet_{\mathrm{Hom}} (\mathfrak{g}, \mathfrak{g}) := \bigoplus_{n \geq 1} C^n_{\mathrm{Hom}} (\mathfrak{g}, \mathfrak{g})$ carries a degree $-1$ graded Lie bracket (also called the {\em Nijenhuis-Richardson bracket} in the present context) that generalizes the classical Nijenhuis-Richardson bracket  given by
\begin{align*}
    [P, Q]_{\mathsf{NR}} := i_P Q - (-1)^{(m-1)(n-1)} i_Q P, ~~~ \text{ where }
\end{align*}
\begin{align}\label{contraction-map}
    (i_P  Q) (x_1, \ldots, x_{m+n-1}) = \sum_{\sigma \in Sh (m, n-1)} (-1)^\sigma ~Q  \big( P (x_{\sigma (1)}, \ldots, x_{\sigma (m)}), \alpha^{m-1} x_{\sigma (m+1)}, \ldots, \alpha^{m-1}x_{\sigma (m+n-1)} \big),
\end{align}
for $P \in C^m_{\mathrm{Hom}} (\mathfrak{g}, \mathfrak{g})$, $Q \in C^n_{\mathrm{Hom}} (\mathfrak{g}, \mathfrak{g})$ and $x_1, \ldots, x_{m+n-1} \in \mathfrak{g}$. In other words, the shifted graded space $C^{\bullet + 1}_{\mathrm{Hom}} (\mathfrak{g}, \mathfrak{g}) = C^\bullet_{\mathrm{Hom}} (\mathfrak{g}, \mathfrak{g})[1]$ is a graded Lie algebra with the above Nijenhuis-Richardson bracket. Note that an element $\mu \in C^2_\mathrm{Hom} (\mathfrak{g}, \mathfrak{g})$ corresponds to a bilinear skew-symmetric multiplicative bracket $[~,~] : \mathfrak{g} \times \mathfrak{g} \rightarrow \mathfrak{g}$ given by 
\begin{align*}
[x, y]:= \mu (x, y), \text{ for } x, y \in \mathfrak{g}.
\end{align*}
With the above notations, $\mu$ is a Maurer-Cartan element of the graded Lie algebra $( C^{\bullet + 1}_{\mathrm{Hom}} (\mathfrak{g}, \mathfrak{g}) , [~,~]_\mathsf{NR})$ if and only if $(\mathfrak{g}, [~,~], \alpha)$ is a Hom-Lie algebra. Thus, Hom-Lie brackets are characterized by Maurer-Cartan elements.

Let $(\mathfrak{g}, [~,~], \alpha)$ be a Hom-Lie algebra. Note that the 
differential (\ref{hom-lie-coho}) of the Hom-Lie algebra $(\mathfrak{g}, [~,~], \alpha)$ with coefficients in the adjoint representation can simply be written as 
\begin{align*}
    \delta_{\mathrm{Hom}} (f) = -[\mu, f]_{\mathsf{NR}}, ~ \text{ for } f \in C^n_{\mathrm{Hom}} (\mathfrak{g}, \mathfrak{g}),
\end{align*}
where $\mu \in C^2_{\mathrm{Hom}} (\mathfrak{g}, \mathfrak{g})$ is the Maurer-Cartan element corresponding to the Hom-Lie bracket $[~,~]$. This description of the differential also says that $\delta_\mathrm{Hom}$ is a graded derivation for the graded Lie algebra $(C^{\bullet + 1 }_\mathrm{Hom} (\mathfrak{g}, \mathfrak{g}), [~,~]_\mathsf{NR})$.


\section{Cup product bracket for Hom-Lie algebras}\label{sec3}
In this section, we first define the cup product bracket in the context of Hom-Lie algebras and show that it defines a graded Lie algebra structure. This graded Lie algebra together with a suitable differential becomes a differential graded Lie algebra whose Maurer-Cartan elements correspond to Hom-Lie algebra morphisms. As applications, we study cohomology and deformations of a Hom-Lie algebra morphism. In the end, we provide some new descriptions of the cup product bracket and show that it induces a trivial operation at the level of cohomology.

Let $(\mathfrak{g}, [~,~], \alpha)$ be a Hom-Lie algebra. For each $m, n \geq 1$, we define a bracket
\begin{align*}
    [~,~]_\mathsf{C} : C^m_\mathrm{Hom} (\mathfrak{g}, \mathfrak{g}) \times C^n_\mathrm{Hom} (\mathfrak{g}, \mathfrak{g}) \rightarrow C^{m+n}_\mathrm{Hom} (\mathfrak{g}, \mathfrak{g}) 
\end{align*}
by
\begin{align}\label{cupp}
    [P, Q]_\mathsf{C} (x_1, \ldots, x_{m+n} ) := \sum_{\sigma \in Sh (m,n)} (-1)^\sigma ~[\alpha^{n-1} P (x_{\sigma (1)}, \ldots, x_{\sigma (m)}), \alpha^{m-1} Q(x_{\sigma (m+1)}, \ldots, x_{\sigma (m+n)})],
\end{align}
for $P \in C^m_\mathrm{Hom} (\mathfrak{g}, \mathfrak{g})$, $Q \in C^n_\mathrm{Hom} (\mathfrak{g}, \mathfrak{g})$ and $x_1, \ldots, x_{m+n} \in \mathfrak{g}$. This is called the {\em cup product bracket} or simply the {\em cup product}.

\begin{remark}
When $\mathfrak{g}$ is a Lie algebra (i.e. $\alpha = \mathrm{id}_\mathfrak{g}$), one gets the cup product in the Chevalley-Eilenberg cochain complex of $\mathfrak{g}$ \cite{nij-ric-mor}.
\end{remark}

The cup product defined above can be generalized to cochains of a Hom-Lie algebra with coefficients in another Hom-Lie algebra. More precisely, let $(\mathfrak{g}, [~,~]_\mathfrak{g}, \alpha)$ and $(\mathfrak{h}, [~,~]_\mathfrak{h}, \beta)$ be two Hom-Lie algebras. For each ${m, n \geq 1}$, we define the cup product bracket (denoted by the same notation as above)
   $ [~,~]_{\mathsf{C}} : C^m_{\mathrm{Hom}} (\mathfrak{g}, \mathfrak{h}) \times C^n_{\mathrm{Hom}} (\mathfrak{g}, \mathfrak{h}) \rightarrow C^{m+n}_{\mathrm{Hom}} (\mathfrak{g}, \mathfrak{h})$
by
\begin{align}\label{cup-form}
    [P, Q]_{\mathsf{C}} (x_1, \ldots ,x_{m+n}) := \sum_{\sigma \in Sh (m,n)} (-1)^\sigma ~[\beta^{n-1} P (x_{\sigma (1)}, \ldots, x_{\sigma (m)}), \beta^{m-1} Q(x_{\sigma (m+1)}, \ldots, x_{\sigma (m+n)})]_\mathfrak{h},
\end{align}
for $P \in  C^m_{\mathrm{Hom}} (\mathfrak{g}, \mathfrak{h}),~ Q \in C^n_{\mathrm{Hom}} (\mathfrak{g}, \mathfrak{h})$ and  $x_1, \ldots, x_{m+n} \in \mathfrak{g}$. Then we have the following result.


\begin{thm}
Let $(\mathfrak{g},[~,~]_\mathfrak{g}, \alpha)$ and $(\mathfrak{h}, [~,~]_\mathfrak{h}, \beta) $ be two Hom-Lie algebras. Then 
\begin{align*}
( C^\bullet_\mathrm{Hom} (\mathfrak{g}, \mathfrak{h}) = \bigoplus_{{n \geq 1}} C^n_{\mathrm{Hom}} (\mathfrak{g}, \mathfrak{h}), [~,~]_{\mathsf{C}})
\end{align*}
is a graded Lie algebra.
\end{thm}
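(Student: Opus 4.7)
The plan is to verify the three defining axioms of a graded Lie algebra in the natural grading $|P|=m$ for $P\in C^m_\mathrm{Hom}(\mathfrak{g},\mathfrak{h})$: closure $[C^m_\mathrm{Hom},C^n_\mathrm{Hom}]_\mathsf{C}\subseteq C^{m+n}_\mathrm{Hom}$, graded skew-symmetry $[P,Q]_\mathsf{C}=-(-1)^{mn}[Q,P]_\mathsf{C}$, and the graded Jacobi identity. Closure splits into two pieces: skew-symmetry of $[P,Q]_\mathsf{C}$ in its $m+n$ arguments is the usual shuffle-sum argument, while the compatibility $\beta\circ[P,Q]_\mathsf{C}=[P,Q]_\mathsf{C}\circ\alpha^{\wedge(m+n)}$ follows by applying $\beta$ to (\ref{cup-form}), using multiplicativity of $\mathfrak{h}$ to push $\beta$ inside $[~,~]_\mathfrak{h}$, commuting it past the $\beta^{n-1}$ and $\beta^{m-1}$ factors already present, and then applying $\beta\circ P=P\circ\alpha^{\wedge m}$ and $\beta\circ Q=Q\circ\alpha^{\wedge n}$. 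Graded skew-symmetry follows from the block-swap bijection $Sh(m,n)\to Sh(n,m)$, which introduces a sign $(-1)^{mn}$, together with the skew-symmetry of $[~,~]_\mathfrak{h}$.

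The main content is the graded Jacobi identity, which for $P\in C^m_\mathrm{Hom}$, $Q\in C^n_\mathrm{Hom}$, $R\in C^p_\mathrm{Hom}$ reads
\begin{align*}
(-1)^{mp}[[P,Q]_\mathsf{C},R]_\mathsf{C}+(-1)^{mn}[[Q,R]_\mathsf{C},P]_\mathsf{C}+(-1)^{np}[[R,P]_\mathsf{C},Q]_\mathsf{C}=0.
\end{align*}
I would expand the first term as a double sum indexed by $Sh(m+n,p)\times Sh(m,n)$. The key move is that multiplicativity of $\beta$ lets the outer factor $\beta^{p-1}$ be pushed through the inner bracket, so that a typical summand takes the form
\begin{align*}
\bigl[[\beta^{n+p-2}P(\cdots),\beta^{m+p-2}Q(\cdots)]_\mathfrak{h},\,\beta^{m+n-1}R(\cdots)\bigr]_\mathfrak{h}.
\end{align*}
The standard combinatorial bijection $Sh(m+n,p)\times Sh(m,n)\cong Sh(m,n,p)$ with matching signs then reindexes this as a single sum over $\rho\in Sh(m,n,p)$. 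Performing the analogous expansion for the other two cyclic terms and collecting the contributions from each fixed $\rho$, the three summands should assemble into the Hom-Jacobi identity
\begin{align*}
[\beta(x),[y,z]_\mathfrak{h}]_\mathfrak{h}+[\beta(y),[z,x]_\mathfrak{h}]_\mathfrak{h}+[\beta(z),[x,y]_\mathfrak{h}]_\mathfrak{h}=0
\end{align*}
applied with $x,y,z$ equal to appropriate $\beta$-twists of $P(\cdots),Q(\cdots),R(\cdots)$, after one skew-symmetry flip in $\mathfrak{h}$ per summand to move the double bracket into canonical form.

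The main obstacle is the sign-and-twist bookkeeping that forces the three inner brackets arising from the three cyclic terms to carry \emph{matching} exponents of $\beta$ on $P(\cdots),Q(\cdots),R(\cdots)$ after a common $\beta$-power is pulled out; only then does Hom-Jacobi apply in its standard form. Here the hypothesis that $\mathfrak{h}$ is multiplicative is decisive, both for passing $\beta$ through $[~,~]_\mathfrak{h}$ and for balancing the exponents so that a common power can be factored. A secondary piece of bookkeeping is checking that the sign of the bijection $Sh(m+n,p)\times Sh(m,n)\cong Sh(m,n,p)$, combined with the skew-symmetry flips in $\mathfrak{h}$ and the cyclic prefactors $(-1)^{mp},(-1)^{mn},(-1)^{np}$, matches exactly the signs in the Hom-Jacobi identity, so that each fixed $\rho$ contributes a vanishing sum.
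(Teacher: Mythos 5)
Your proposal is correct and follows essentially the same route as the paper: graded skew-symmetry via the block-swap bijection $Sh(m,n)\cong Sh(n,m)$, and the graded Jacobi identity by expanding each cyclic term as a double shuffle sum, using multiplicativity of $\beta$ to push the outer power of $\beta$ through the inner bracket, reindexing over $Sh(m,n,p)$, and invoking the Hom-Jacobi identity of $[~,~]_\mathfrak{h}$. The only cosmetic differences are that you nest the double bracket on the left (costing one skew-symmetry flip per term, which you correctly account for) while the paper nests it on the right, and that you explicitly record the $\beta$-equivariance needed for closure, which the paper leaves implicit.
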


\begin{proof}
To check the graded skew-symmetry of the cup product bracket, we first observe a bijection between the sets $Sh (m,n)$ and $Sh (n,m)$.
Given any permutation $\sigma \in S_{m+n}$, we define a new permutation $\tau \in S_{m+n}$ by
\begin{align*}
    \tau (i) = \begin{cases}
        \sigma (m+i)  & \text{ for } 1 \leq i \leq n,\\
        \sigma (i-n)  & \text{ for } n+1 \leq i \leq m+n.
    \end{cases}
\end{align*}
Then it is easy to see that $\sigma \in Sh (m,n)$ if and only if $\tau \in Sh (n,m)$. Moreover, we have $(-1)^\sigma = (-1)^{mn} (-1)^\tau$.

For any $P \in  C^m_{\mathrm{Hom}} (\mathfrak{g}, \mathfrak{h})$ and $Q \in C^n_{\mathrm{Hom}} (\mathfrak{g}, \mathfrak{h})$, we observe that
\begin{align*}
     &[P, Q]_{\mathsf{C}} (x_1, \ldots ,x_{m+n})\\
     &= \sum_{\sigma \in Sh (m,n)} (-1)^\sigma ~[\beta^{n-1} P (x_{\sigma (1)}, \ldots, x_{\sigma (m)}), \beta^{m-1} Q(x_{\sigma (m+1)}, \ldots, x_{\sigma (m+n)})]_\mathfrak{h}\\
     &=(-1)^{mn} \sum_{\tau \in Sh (n,m)} (-1)^\tau ~[\beta^{n-1} P (x_{\tau (n+1)}, \ldots, x_{\tau (n+m)}), \beta^{m-1} Q(x_{\tau (1)}, \ldots, x_{\tau (n)})]_\mathfrak{h}\\
     &= -(-1)^{mn} \sum_{\tau \in Sh (n,m)} (-1)^\tau ~[\beta^{m-1} Q(x_{\tau (1)}, \ldots, x_{\tau (n)}), \beta^{n-1} P (x_{\tau (n+1)}, \ldots, x_{\tau (n+m)}) ]_\mathfrak{h}\\
     &=-(-1)^{mn}[ Q,P]_{\mathsf{C}} (x_1, \ldots, x_{m+n}).
\end{align*}
This shows that the cup product is graded skew-symmetric. 
To prove the graded Jacobi identity, we take $P \in C^m_{\mathrm{Hom}} (\mathfrak{g}, \mathfrak{h})$, $Q \in C^n_{\mathrm{Hom}} (\mathfrak{g}, \mathfrak{h})$ and $R \in C^k_{\mathrm{Hom}} (\mathfrak{g}, \mathfrak{h})$. Then for any $x_1, \ldots, x_{m+n+k} \in \mathfrak{g}$, we have
\begin{align} \label{PQR}
    &[P,[Q,R]_{\mathsf{C}}]_{\mathsf{C}}(x_1,x_2, \ldots , x_{m+n+k}) \nonumber\\
    &~= \sum_{\sigma \in Sh (m,n+k)} (-1)^\sigma~ [\beta^{n+k-1}P(x_{\sigma (1)},\ldots , x_{\sigma (m)}), \beta^{m-1}[Q,R]_{\mathsf{C}}(x_{\sigma (m+1)}, \ldots , x_{\sigma (m+n+k)})]_{\mathfrak{h}}  \nonumber  \\
    &~= \sum_{\sigma \in Sh (m,n+k)} (-1)^\sigma ~\big[\beta^{n+k-1}P(x_{\sigma (1)},\ldots , x_{\sigma (m)}),
    \beta^{m-1} \sum_{\tau \in Sh(n,k)}(-1)^\tau [\beta^{k-1}Q(x_{\tau \sigma (m+1)},\ldots ,x_{\tau \sigma (m+n)} ),  \nonumber \\
    & \qquad \qquad \qquad \qquad \qquad \qquad \qquad \qquad \qquad \qquad \qquad \qquad \qquad  \beta^{n-1}R(x_{\tau \sigma (m+n+1)},\ldots , x_{\tau \sigma (m+n+k)})]_{\mathfrak{h}} \big]_{\mathfrak{h}}  \nonumber \\
    &~=\sum_{\sigma \in Sh (m,n+k)} \sum_{\tau \in Sh(n,k)}(-1)^\sigma (-1)^\tau ~\big[\beta^{n+k-1}P(x_{\sigma (1)},\ldots , x_{\sigma (m)}), [\beta^{m+k-2}Q(x_{\tau \sigma (m+1)},\ldots ,x_{\tau \sigma (m+n)} ), \nonumber \\
     & \qquad \qquad \qquad \qquad \qquad \qquad \qquad \qquad \qquad \qquad  \beta^{m+n-2}R(x_{\tau \sigma (m+n+1)},\ldots , x_{\tau \sigma (m+n+k)})]_{\mathfrak{h}} \big]_{\mathfrak{h}}
    .
\end{align}
 We now establish a bijective correspondence between the sets $Sh(m,n,k)$ and $Sh(m,n+k) \times Sh(n,k)$. For any given permutations $\sigma \in S_{m+n+k}, \tau \in S_{n+k}$, another permutation, say $\gamma$ can be defined in $S_{m+n+k}$ as
\begin{align*}
    \gamma (i) = \begin{cases}
        \sigma (i)  & \text{ for } 1 \leq i \leq m,\\
        \tau \sigma (i)  & \text{ for } m+1 \leq i \leq m+n+k.
    \end{cases}
\end{align*}
If $\sigma \in Sh(m,n+k)$, $\tau \in Sh(n,k)$ then $\gamma \in Sh(m,n,k)$. We also see that $(-1)^{\gamma}= (-1)^{\sigma}(-1)^{\tau}$. Thus, from (\ref{PQR}) we get
    \begin{align*}
        &[P,[Q,R]_{\mathsf{C}}]_{\mathsf{C}}(x_1,x_2, \ldots , x_{m+n+k}) \\
        &~= \sum_{\gamma \in Sh (m,n,k)} (-1)^\gamma ~\big[\beta^{n+k-1}P(x_{\gamma (1)},\ldots , x_{\gamma (m)}),[\beta^{m+k-2}Q(x_{\gamma (m+1)},\ldots , x_{\gamma (m+n)}),\\
        & \qquad \qquad \qquad \qquad \qquad \qquad \qquad \qquad  \beta^{m+n-2}R(x_{\gamma (m+n+1)},\ldots , x_{\gamma (m+n+k)})]_{\mathfrak{h}} \big]_{\mathfrak{h}}.
    \end{align*}
By following the same approach and applying bijections between sets $Sh(m,n,k), Sh(n,k,m)$,  and then $Sh(m,n,k), Sh(k,m,n)$, we obtain the following two identities : 
    \begin{align*}
        &(-1)^{m(k+n)}[Q,[R,P]_{\mathsf{C}}]_{\mathsf{C}}(x_1,x_2, \ldots , x_{m+n+k})\\
        &~= \sum_{\sigma \in Sh (m,n,k)} (-1)^\sigma~\big [\beta^{m+k-1}Q(x_{\sigma (m+1)},\ldots , x_{\sigma (m+n)}),[\beta^{m+n-2}R(x_{\sigma (m+n+1)},\ldots , x_{\sigma (m+n+k)}),\\
        & \qquad \qquad \qquad \qquad \qquad \qquad  \beta^{k+n-2}P(x_{\sigma (1)},\ldots , x_{\sigma (m)})]_{\mathfrak{h}} \big]_{\mathfrak{h}},\\
        &(-1)^{k(m+n)}[R,[P,Q]_{\mathsf{C}}]_{\mathsf{C}}(x_1,x_2, \ldots , x_{m+n+k})\\
        &~= \sum_{\sigma \in Sh (m,n,k)} (-1)^\sigma~ \big[ \beta^{m+n-1}R(x_{\sigma (m+n+1)},\ldots , x_{\sigma (m+n+k)}),  [\beta^{n+k-2}P(x_{\sigma (1)},\ldots , x_{\sigma (m)}),\\
        &\qquad \qquad \qquad \qquad \qquad \qquad \beta^{m+k-2}Q(x_{\sigma (m+1)},\ldots , x_{\sigma (m+n)}) ]_{\mathfrak{h}} \big]_{\mathfrak{h}}.
    \end{align*}
Due to the Hom-Jacobi identity of $[~,~]_{\mathfrak{h}}$, it is clear that
    \begin{align*}
        [P,[Q,R]_{\mathsf{C}}]_{\mathsf{C}}+(-1)^{m(k+n)}[Q,[R,P]_{\mathsf{C}}]_{\mathsf{C}}+(-1)^{k(m+n)}[R,[P,Q]_{\mathsf{C}}]_{\mathsf{C}}=0.
    \end{align*}
Using the skew-symmetry of the $[~,~]_{\mathsf{C}}$ in the above expression, we finally get
    \begin{align*}
        [P,[Q,R]_{\mathsf{C}}]_{\mathsf{C}}= [[P,Q]_{\mathsf{C}},R]_{\mathsf{C}} + (-1)^{mn}[Q,[P,R]_{\mathsf{C}}]_{\mathsf{C}}.
    \end{align*}
Hence the bracket $[~,~]_\mathsf{C}$ satisfies the graded Jacobi identity.
\end{proof}


We also define a map $D : C^n_{\mathrm{Hom}} (\mathfrak{g}, \mathfrak{h}) \rightarrow C^{n+1}_{\mathrm{Hom}} (\mathfrak{g}, \mathfrak{h})$ by
\begin{align*}
(D f) (x_1, \ldots, x_{n+1}) = \sum_{1 \leq i < j \leq n+1} (-1)^{i+j}  ~ f ( [x_i, x_j]_\mathfrak{g}, \alpha (x_1), \ldots, \alpha (x_{i-1}), \widehat{\alpha (x_i)}, \ldots, \widehat{\alpha (x_j)}, \ldots, \alpha (x_{n+1} ) ),
\end{align*}
for $f \in  C^n_{\mathrm{Hom}} (\mathfrak{g}, \mathfrak{h})$ and $x_1, \ldots, x_{n+1} \in \mathfrak{g}$.
This is precisely the differential defining the cohomology of the Hom-Lie algebra $(\mathfrak{g}, [~,~]_\mathfrak{g}, \alpha)$ with coefficients in the trivial representation $(\mathfrak{h}, \diamond = 0, \beta)$. Hence we have $D^2 = 0$. Moreover, $D$ is a derivation for the cup product bracket $[~,~]_{\mathsf{C}}.$ In other words, $(C^\bullet_{\mathrm{Hom}} (\mathfrak{g}, \mathfrak{h}), [~,~]_\mathsf{C}, D) $ is a differential graded Lie algebra. In the following, we will study deformations of Hom-Lie algebra morphisms in terms of this differential graded Lie algebra.

\medskip

\noindent {\bf Cohomology and deformations of Hom-Lie algebra morphisms.}

    \begin{proposition}
        Let $(\mathfrak{g}, [~,~]_\mathfrak{g}, \alpha)$ and $(\mathfrak{h}, [~,~]_\mathfrak{h}, \beta)$ be two Hom-Lie algebras. Then a linear map $\varphi: \mathfrak{g} \rightarrow \mathfrak{h}$ is a Hom-Lie algebra morphism if and only if $\phi \in C^1_{\mathrm{Hom}} (\mathfrak{g}, \mathfrak{h})$ and it is a Maurer-Cartan element in the differential graded Lie algebra $(C^\bullet_{\mathrm{Hom}} (\mathfrak{g}, \mathfrak{h}), [~,~]_\mathsf{C}, D) $. 
    \end{proposition}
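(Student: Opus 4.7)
The plan is to unpack the Maurer--Cartan equation $D\varphi + \frac{1}{2}[\varphi,\varphi]_{\mathsf{C}} = 0$ for a degree one cochain and match it directly with the two conditions defining a morphism of multiplicative Hom-Lie algebras, namely $\beta \circ \varphi = \varphi \circ \alpha$ and $\varphi([x,y]_\mathfrak{g}) = [\varphi(x),\varphi(y)]_\mathfrak{h}$ for all $x, y \in \mathfrak{g}$. The first of these is already built into the hypothesis $\varphi \in C^1_{\mathrm{Hom}}(\mathfrak{g}, \mathfrak{h})$, since by definition a $1$-cochain is a linear map satisfying $\beta \circ \varphi = \varphi \circ \alpha$. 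Hence it suffices to show that, under this intertwining hypothesis, the Maurer--Cartan equation is equivalent to bracket preservation.

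To check this, I would evaluate both sides on an arbitrary pair $(x,y) \in \mathfrak{g} \times \mathfrak{g}$. From the formula for $D$, the only surviving term when $n = 1$ is the one with $(i,j) = (1,2)$, which gives $(D\varphi)(x,y) = -\varphi([x,y]_\mathfrak{g})$ (no $\alpha$-twisted arguments appear because both positions are hatted). For the cup product, the shuffle set $Sh(1,1)$ consists of the identity and the transposition, and the exponents $\beta^{n-1}$, $\beta^{m-1}$ in the formula for $[~,~]_{\mathsf{C}}$ both reduce to $\beta^0 = \mathrm{id}_\mathfrak{h}$ when $m = n = 1$. A short computation using skew-symmetry of $[~,~]_\mathfrak{h}$ then yields $[\varphi,\varphi]_{\mathsf{C}}(x,y) = 2\,[\varphi(x),\varphi(y)]_\mathfrak{h}$.

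Combining the two contributions, the Maurer--Cartan equation evaluated at $(x,y)$ reads $-\varphi([x,y]_\mathfrak{g}) + [\varphi(x),\varphi(y)]_\mathfrak{h} = 0$, which holds for every pair if and only if $\varphi$ preserves the brackets. This completes the equivalence. The proof is thus a direct low-degree calculation with no genuine obstacle; the only mildly delicate bookkeeping is tracking the sign $(-1)^{i+j}$ coming from $D$ against the shuffle sign in the cup product, in order to confirm that the two contributions combine with the correct relative sign so that the resulting equation expresses bracket-preservation rather than its negative.
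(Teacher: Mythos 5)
Your proposal is correct and follows essentially the same route as the paper: both proofs observe that $\varphi \in C^1_{\mathrm{Hom}}(\mathfrak{g},\mathfrak{h})$ encodes the condition $\beta\circ\varphi=\varphi\circ\alpha$, and then evaluate $D\varphi + \tfrac{1}{2}[\varphi,\varphi]_\mathsf{C}$ on a pair $(x,y)$ to get $-\varphi([x,y]_\mathfrak{g}) + [\varphi(x),\varphi(y)]_\mathfrak{h}$, whose vanishing is exactly bracket preservation. The sign bookkeeping you flag works out precisely as you describe.
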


    \begin{proof}
    Note that the linear map $\varphi : \mathfrak{g} \rightarrow \mathfrak{h}$ satisfies $\beta \circ \varphi = \varphi \circ \alpha$ if and only if $\varphi \in C^1_\mathrm{Hom} (\mathfrak{g}, \mathfrak{h})$. Moreover, we have
        \begin{align*}
        \big(       D \varphi + \frac{1}{2} [\varphi, \varphi]_\mathsf{C} \big) (x, y ) = - \varphi ([x,y]_\mathfrak{g} ) + \frac{1}{2} ( [\varphi(x), \varphi(y)]_\mathfrak{h} - [\varphi(y), \varphi (x)]_\mathfrak{h}),
        \end{align*} 
        for any $x, y \in \mathfrak{g}$. Hence $\varphi$ is a Maurer-Cartan element (i.e. $ D \varphi + \frac{1}{2} [\varphi, \varphi]_\mathsf{C} = 0$) if and only if $\varphi ([x, y]_\mathfrak{g}) = [\phi(x) , \phi (y)]_\mathfrak{h}$, for $x, y \in \mathfrak{g}$.
    \end{proof}

    It follows from the above proposition that a morphism $\varphi : \mathfrak{g} \rightarrow \mathfrak{h}$ of Hom-Lie algebras induces a differential $D_\varphi : C^\bullet_{\mathrm{Hom}} (\mathfrak{g}, \mathfrak{h}) \rightarrow C^{\bullet + 1}_{\mathrm{Hom}} (\mathfrak{g}, \mathfrak{h})$ given by
    \begin{align}\label{twisted-diff}
    D_\varphi (f) = D f + [ \varphi, f ]_\mathsf{C}.
    \end{align}
    The triple $(C^\bullet_{\mathrm{Hom}} (\mathfrak{g}, \mathfrak{h}), [~,~]_\mathsf{C}, D_\varphi)$ is also a graded Lie algebra.

\begin{proposition}
Let $\varphi: \mathfrak{g} \rightarrow \mathfrak{h}$ be a morphism of Hom-Lie algebras. For a linear map $\varphi' : \mathfrak{g} \rightarrow \mathfrak{h}$ satisfying $\beta \circ \varphi' = \varphi' \circ \alpha$, the sum $\varphi + \varphi' $ is a Hom-Lie algebra morphism if and only if $\varphi' \in C^1_{\mathrm{Hom}} (\mathfrak{g}, \mathfrak{h})$ is a Maurer-Cartan element in the differential graded Lie algebra $( C^\bullet_{\mathrm{Hom}} (\mathfrak{g}, \mathfrak{h}), [~,~]_\mathsf{C}, D_\varphi )$.
\end{proposition}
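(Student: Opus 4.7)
The plan is to reduce this to the preceding proposition by a short bilinear expansion. First I would note that $\beta \circ \varphi = \varphi \circ \alpha$ (since $\varphi$ is a Hom-Lie algebra morphism) together with $\beta \circ \varphi' = \varphi' \circ \alpha$ (by hypothesis) gives $\varphi + \varphi' \in C^1_{\mathrm{Hom}}(\mathfrak{g}, \mathfrak{h})$. By the previous proposition, $\varphi + \varphi'$ is then a Hom-Lie algebra morphism if and only if
\begin{equation*}
D(\varphi + \varphi') + \tfrac{1}{2}\,[\varphi + \varphi',\, \varphi + \varphi']_{\mathsf{C}} \;=\; 0.
\end{equation*}

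I would then expand this identity using the bilinearity of $D$ and of the cup product. The key input is that since $\varphi$ and $\varphi'$ both lie in cochain degree $1$, the graded skew-symmetry $[P,Q]_{\mathsf{C}} = -(-1)^{mn}[Q,P]_{\mathsf{C}}$ established in the preceding theorem specialises, for $m = n = 1$, to $[\varphi, \varphi']_{\mathsf{C}} = [\varphi', \varphi]_{\mathsf{C}}$. Thus the two cross terms combine into $2[\varphi, \varphi']_{\mathsf{C}}$, and the Maurer-Cartan condition above rearranges into
\begin{equation*}
\bigl(D\varphi + \tfrac{1}{2}[\varphi, \varphi]_{\mathsf{C}}\bigr) \;+\; \bigl(D\varphi' + [\varphi, \varphi']_{\mathsf{C}} + \tfrac{1}{2}[\varphi', \varphi']_{\mathsf{C}}\bigr) \;=\; 0.
\end{equation*}

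Since $\varphi$ is itself a Hom-Lie algebra morphism, the first bracketed expression vanishes by the previous proposition. Recalling the definition $D_\varphi f = Df + [\varphi, f]_{\mathsf{C}}$ from (\ref{twisted-diff}), the remainder is exactly $D_\varphi \varphi' + \tfrac{1}{2}[\varphi', \varphi']_{\mathsf{C}} = 0$, which is the Maurer-Cartan equation for $\varphi'$ in the twisted differential graded Lie algebra $(C^\bullet_{\mathrm{Hom}}(\mathfrak{g}, \mathfrak{h}), [~,~]_{\mathsf{C}}, D_\varphi)$. Both implications of the stated equivalence follow at once.

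The one conceptual point that must not be overlooked is that the cup product of two degree-$1$ cochains is graded \emph{symmetric} rather than skew, so the cross term in the expansion of $[\varphi+\varphi',\,\varphi+\varphi']_{\mathsf{C}}$ is a genuine $2[\varphi, \varphi']_{\mathsf{C}}$ and not zero; it is precisely this cross term, together with $D\varphi'$, that assembles into the twisted differential $D_\varphi \varphi'$. Beyond this observation the argument is purely formal bilinearity, so I do not anticipate any serious obstacle.
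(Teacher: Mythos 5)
Your proposal is correct and follows essentially the same route as the paper: expand $D(\varphi+\varphi') + \tfrac{1}{2}[\varphi+\varphi',\varphi+\varphi']_{\mathsf{C}}$ bilinearly, use the symmetry $[\varphi,\varphi']_{\mathsf{C}} = [\varphi',\varphi]_{\mathsf{C}}$ in degree $1$ and the Maurer--Cartan equation for $\varphi$, and identify the remainder with $D_\varphi(\varphi') + \tfrac{1}{2}[\varphi',\varphi']_{\mathsf{C}}$. Your explicit remark about the graded symmetry of the cup product on degree-$1$ cochains is exactly the (implicit) step the paper relies on in combining the cross terms.
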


\begin{proof}
We have
\begin{align*}
&D ( \varphi + \varphi' ) + \frac{1}{2} [\varphi + \varphi' ,  \varphi + \varphi']_{\mathsf{C}} \\
&= D \varphi + D \varphi' + \frac{1}{2} ( [ \varphi, \varphi]_{\mathsf{C}} +  [ \varphi, \varphi']_{\mathsf{C}} +  [ \varphi', \varphi]_{\mathsf{C}} + [ \varphi', \varphi']_{\mathsf{C}} ) \\
&= D \varphi' + [ \varphi, \varphi']_{\mathsf{C}} + \frac{1}{2} [\varphi', \varphi']_{\mathsf{C}} = D_\varphi (\varphi') +   \frac{1}{2} [\varphi', \varphi']_{\mathsf{C}}.
\end{align*}
Hence the result follows from definitions.
\end{proof}

Let $\varphi: \mathfrak{g} \rightarrow \mathfrak{h}$ be a morphism of Hom-Lie algebras. Then $\varphi$ induces a map $\diamond : \mathfrak{g} \times \mathfrak{h} \rightarrow \mathfrak{h}$ given by $x \diamond y := [ \varphi (x) , y ]_{\mathfrak{h}}$ for $x \in \mathfrak{g},~ y \in \mathfrak{h}.$ Then the triple $(\mathfrak{h}, \diamond , \beta)$ is a representation of the Hom-Lie algebra $(\mathfrak{g}, [~,~]_{\mathfrak{g}}, \alpha)$. Hence we can consider the cochain complex defining the cohomology of the Hom-Lie algebra $(\mathfrak{g}, [~,~]_\mathfrak{g}, \alpha)$ with coefficients in the above representation. Then it is easy to see that the corresponding differential map $\delta_\mathrm{Hom}^\varphi: C^\bullet_\mathrm{Hom} (\mathfrak{g}, \mathfrak{h}) \rightarrow  C^{\bullet +1}_\mathrm{Hom} (\mathfrak{g}, \mathfrak{h})$ is same with the map $D_\varphi$. The cohomology groups of the complex $\{ C^\bullet_\mathrm{Hom} (\mathfrak{g}, \mathfrak{h}), \delta_\mathrm{Hom}^\varphi \}$ or the complex $\{ C^\bullet_\mathrm{Hom} (\mathfrak{g}, \mathfrak{h}), D_\varphi   \}$ are called the {\em cohomology groups} of $\varphi$, and they are denoted by $H^\bullet_\varphi (\mathfrak{g}, \mathfrak{h})$. As an application of the cohomology of a Hom-Lie algebra morphism $\varphi$, we now study deformations of $\varphi$ by keeping the domain and codomain Hom-Lie algebras intact. The general deformations of a Hom-Lie algebra morphism are considered in \cite{arfa-fraj-makh}.

\medskip


Let $( \mathfrak{g}, [~,~]_\mathfrak{g}, \alpha)$ be a Hom-Lie algebra. Consider the space $\mathfrak{g} [ \! [ t ] \! ]$ of all formal power series in $t$ with coefficients in $\mathfrak{g}$. Then $\mathfrak{g}  [ \! [ t ] \! ]$ is a ${\bf k}  [ \! [ t ] \! ]$-module. Note that the ${\bf k}$-bilinear map $[~,~]_\mathfrak{g} : \mathfrak{g} \times \mathfrak{g} \rightarrow \mathfrak{g}$ (resp. the ${\bf k}$-linear map $\alpha : \mathfrak{g} \rightarrow \mathfrak{g}$) can be extended to a ${\bf k}  [ \! [ t ] \! ]$-bilinear map (resp. ${\bf k}  [ \! [ t ] \! ]$-linear map) which we denote by the same notation $[~,~]_\mathfrak{g} : \mathfrak{g}  [ \! [ t ] \! ]  \times \mathfrak{g} [ \! [ t ] \! ] \rightarrow \mathfrak{g}  [ \! [ t ] \! ]$ (resp. $\alpha : \mathfrak{g}  [ \! [ t ] \! ] \rightarrow \mathfrak{g}  [ \! [ t ] \! ]$). Then $(\mathfrak{g} [ \! [ t ] \! ], [~,~]_\mathfrak{g}, \alpha)$ is a Hom-Lie algebra over the ring ${\bf k} [ \! [ t ] \! ]$.

\begin{definition}
Let $( \mathfrak{g}, [~,~]_\mathfrak{g}, \alpha)$ and $( \mathfrak{h}, [~,~]_\mathfrak{h}, \beta)$ be two Hom-Lie algebras and $\varphi: \mathfrak{g} \rightarrow \mathfrak{h}$ be a Hom-Lie algebra morphism. A {\bf formal deformation} of $\varphi$ consists of a formal sum
\begin{align*}
\varphi_t = \sum_{i = 0}^\infty t^i \varphi_i \in \mathrm{Hom}(\mathfrak{g}, \mathfrak{h}) [\![t]\!] ~ \text{ with } \varphi_0 = \varphi
\end{align*}
that satisfies $\beta \circ \varphi_t = \varphi_t \circ \alpha$ and $\varphi_t ([x, y]_\mathfrak{g}) = [\varphi_t (x) , \varphi_t (y) ]_\mathfrak{h}$, for all $x, y \in \mathfrak{g}$.
\end{definition}

It follows that $\varphi_t = \sum_{i = 0}^\infty t^i \varphi_i $ is a formal deformation of $\varphi$ if the ${\bf k} [ \! [ t ] \! ]$-linear map $\varphi_t : \mathfrak{g}[\![ t ]\!] \rightarrow \mathfrak{h}[\![ t ]\!]$ is a morphism of Hom-Lie algebras from $( \mathfrak{g}[\![ t ]\!] , [~,~]_\mathfrak{g}, \alpha)$ to $( \mathfrak{h}[\![ t ]\!] , [~,~]_\mathfrak{h}, \alpha)$. Equivalently, we must have
\begin{align}\label{defor-eqn}
\begin{cases} \beta \circ \varphi_n = \varphi_n \circ \alpha,\\
\varphi_n ([x,y]_{\mathfrak{g}}) = \sum_{i+j= n} [\varphi_i (x) , \varphi_j (y) ]_\mathfrak{h},
\end{cases}
\end{align}
for each $n \geq 0$ and $x, y \in \mathfrak{g}$.
The conditions in (\ref{defor-eqn}) hold trivially for $n=0$ as we have $\varphi_0 = \varphi$ is a Hom-Lie algebra morphism. However, for $n=1$, we have $\beta \circ \varphi_1 = \varphi_1 \circ \alpha$ and
\begin{align}
\varphi_1 ([x, y]_\mathfrak{g}) = [\varphi(x) , \varphi_1 (y)]_\mathfrak{h} + [ \varphi_1 (x) , \varphi (y) ]_{\mathfrak{h}}, ~ \text{ for } x, y \in \mathfrak{g}.
\end{align}
This shows that $\varphi_1$ is a $1$-cocycle in the cohomology complex of $\varphi$. This is called the {\em infinitesimal} of the formal deformation $\varphi_t$.


In the following, we consider finite order deformations of a Hom-Lie algebra morphism and their extensibility. As before, let $(\mathfrak{g}, [~,~]_\mathfrak{g}, \alpha)$ and  $(\mathfrak{h}, [~,~]_\mathfrak{h}, \beta)$ be two Hom-Lie algebras and $\varphi : \mathfrak{g} \rightarrow \mathfrak{h}$ be a Hom-Lie algebra morphism. For $N \geq 1$, consider the Hom-Lie algebras $(\mathfrak{g} [\![t]\!] / (t^{N+1}) , [~,~]_\mathfrak{g}, \alpha )$ and $(\mathfrak{h} [\![t]\!] / (t^{N+1}) , [~,~]_\mathfrak{h}, \beta )$ over the ring ${\bf k} [\![t]\!] / (t^{N+1})$. An {\bf order $N$ deformation} of $\varphi$ consists of a sum $\varphi_t = \sum_{i = 0}^N t^i \varphi_i$ with $\varphi_0 = \varphi$ such that the ${\bf k} [\![t]\!] / (t^{N+1})$-linear map $\varphi_t$ is a Hom-Lie algebra morphism from $(\mathfrak{g} [\![t]\!] / (t^{N+1}) , [~,~]_\mathfrak{g}, \alpha ) $ to $(\mathfrak{h} [\![t]\!] / (t^{N+1}) , [~,~]_\mathfrak{h}, \beta )$.

Thus, $\varphi_t = \sum_{i=0}^N t^i \varphi_i$ is an order $N$ deformation if and only if the conditions in (\ref{defor-eqn}) hold for $n=0, 1, \ldots, N$. That is, 
\begin{align*}
    \beta \circ \varphi_n = \varphi_n \circ \alpha ~~~ \text{ and } ~~~ D(\varphi_n) + \frac{1}{2} \sum_{i+j = n} [\varphi_i , \varphi_j]_\mathsf{C} = 0,  ~ \text{ for } n = 0, 1, \ldots, N.
\end{align*}
The last condition can be equivalently written as
\begin{align}
D_\varphi (\varphi_n ) = - \frac{1}{2} \sum_{ \substack{   i+j = n \\ i, j \geq 1}} [\varphi_i, \varphi_j ]_\mathsf{C}, ~ \text{ for } n = 0, 1, \ldots, N.
\end{align}

An order $N$ deformation $\varphi_t = \sum_{i=0}^N t^i \varphi_i$ is said to be {\bf extensible} if there exists a linear map $\varphi_{N+1} : \mathfrak{g} \rightarrow \mathfrak{h}$ with $\beta \circ \varphi_{N+1} = \varphi_{N+1} \circ \alpha$ such that $\overline{\phi_t} = \sum_{i=0}^{N+1} t^i \varphi_i$ is an  order $N+1$ deformation. Thus, we are looking for a linear map $\varphi_{N+1} : \mathfrak{g} \rightarrow \mathfrak{h}$ with $\beta \circ \varphi_{N+1} = \varphi_{N+1} \circ \alpha$ that satisfies
\begin{align}\label{exten-eqn}
D_\varphi ( \varphi_{N+1} ) = - \frac{1}{2} \sum_{ \substack{   i+j = N+1 \\ i, j \geq 1}} [ \varphi_i, \varphi_j ]_\mathsf{C}.
\end{align}
Note that the right-hand side of (\ref{exten-eqn}) depends only on the deformation $\varphi_t$. It is called the {\em obstruction} to extend the order $N$ deformation $\varphi_t$, and denoted by $\mathrm{Ob}_{\varphi_t}.$

\begin{proposition}
The obstruction $\mathrm{Ob}_{\varphi_t}$ is a $2$-cocycle in the cohomology complex of $\varphi$, i.e. 
\begin{align*}
D_\varphi (  \mathrm{Ob}_{\varphi_t}) = 0.
\end{align*}
\end{proposition}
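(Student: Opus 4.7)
\smallskip

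\noindent\textbf{Proof plan.} The key observation is that $D_\varphi = D + [\varphi, -]_\mathsf{C}$ is a graded derivation of degree $+1$ for the cup product bracket. Indeed, $D$ is already a graded derivation of $[~,~]_\mathsf{C}$ (it is the differential of the DGLA $(C^\bullet_\mathrm{Hom}(\mathfrak{g},\mathfrak{h}),[~,~]_\mathsf{C},D)$), and for any graded Lie algebra the operator $[\varphi,-]_\mathsf{C}$ is a derivation by the graded Jacobi identity. So $D_\varphi$ is a derivation, and moreover $D_\varphi^2 = 0$ since $\varphi$ is a Maurer-Cartan element.

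Next, I would rewrite the order $N$ deformation equations in a form involving $D_\varphi$. Using that all $\varphi_i$ have degree $1$, graded skew-symmetry gives $[\varphi_0,\varphi_n]_\mathsf{C} = [\varphi_n,\varphi_0]_\mathsf{C}$, so separating the $i=0$ and $j=0$ terms in $\tfrac{1}{2}\sum_{i+j=n}[\varphi_i,\varphi_j]_\mathsf{C}$ produces $[\varphi,\varphi_n]_\mathsf{C}$ plus a sum over $i,j \geq 1$. The deformation equation therefore takes the equivalent form
\begin{align*}
D_\varphi(\varphi_n) \;=\; -\tfrac{1}{2}\!\sum_{\substack{i+j=n\\ i,j\geq 1}}[\varphi_i,\varphi_j]_\mathsf{C}, \qquad n = 1,\dots,N.
\end{align*}

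Now I would apply $D_\varphi$ to the obstruction and expand using the derivation property. Since $|\varphi_i| = 1$,
\begin{align*}
D_\varphi(\mathrm{Ob}_{\varphi_t}) \;=\; -\tfrac{1}{2}\!\sum_{\substack{i+j=N+1\\ i,j\geq 1}} \!\Big([D_\varphi\varphi_i,\varphi_j]_\mathsf{C} - [\varphi_i, D_\varphi\varphi_j]_\mathsf{C}\Big).
\end{align*}
Since $1 \leq i,j \leq N$ in this sum, the previous identity may be substituted for both $D_\varphi\varphi_i$ and $D_\varphi\varphi_j$, producing a sum of triple brackets $[[\varphi_k,\varphi_l]_\mathsf{C},\varphi_j]_\mathsf{C}$ and $[\varphi_i,[\varphi_k,\varphi_l]_\mathsf{C}]_\mathsf{C}$ indexed by triples $(k,l,j)$ respectively $(i,k,l)$ of positive integers summing to $N+1$.

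The final step, and the only genuinely technical one, is to show that all these triple brackets cancel. Grouping the terms by the unordered partition $\{a,b,c\}$ of $N+1$ appearing as indices, each such partition contributes a sum of the form $[[\varphi_a,\varphi_b]_\mathsf{C},\varphi_c]_\mathsf{C} + \text{cyclic}$, which vanishes by the graded Jacobi identity (all elements being of degree $1$, the Jacobi identity reads $[[\varphi_a,\varphi_b]_\mathsf{C},\varphi_c]_\mathsf{C} + [[\varphi_b,\varphi_c]_\mathsf{C},\varphi_a]_\mathsf{C} + [[\varphi_c,\varphi_a]_\mathsf{C},\varphi_b]_\mathsf{C} = 0$). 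The main potential pitfall is keeping the combinatorial multiplicities and signs straight when collecting terms with repeated indices, but with all degrees equal to $1$ the signs collapse and a careful bookkeeping yields $D_\varphi(\mathrm{Ob}_{\varphi_t}) = 0$ as desired.
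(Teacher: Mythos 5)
Your proposal is correct and follows essentially the same route as the paper: apply the derivation property of $D_\varphi$ (the paper does this by splitting into $D$ and $[\varphi,-]_\mathsf{C}$ and recombining), substitute the order-$N$ deformation equations $D_\varphi(\varphi_n) = -\tfrac{1}{2}\sum_{i+j=n,\,i,j\geq 1}[\varphi_i,\varphi_j]_\mathsf{C}$, and kill the resulting sum of triple brackets via the graded Jacobi identity for degree-$1$ elements. The signs and the final cancellation argument match the paper's computation.
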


\begin{proof}
We have
\begin{align*}
D_\varphi (  \mathrm{Ob}_{\varphi_t})  &= D_\varphi ( - \frac{1}{2} \sum_{ \substack{   i+j = N+1 \\ i, j \geq 1}} [ \varphi_i, \varphi_j ]_\mathsf{C} ) \\
&=  - \frac{1}{2} \sum_{ \substack{   i+j = N+1 \\ i, j \geq 1}}   ( D [ \varphi_i, \varphi_j ]_\mathsf{C} + [ \varphi, [ \varphi_i, \varphi_j ]_\mathsf{C}  ]_\mathsf{C}) \\
&= - \frac{1}{2} \sum_{ \substack{   i+j = N+1 \\ i, j \geq 1}}   \big( [ D \varphi_i, \varphi_j ]_\mathsf{C} -  [ \varphi_i, D \varphi_j ]_\mathsf{C}  + [ [ \varphi, \varphi_i]_\mathsf{C} , \varphi_j ]_\mathsf{C}  - [ \varphi_i, [ \varphi, \varphi_j ]_\mathsf{C} ]_\mathsf{C} \big) \\
&=  - \frac{1}{2} \sum_{ \substack{   i+j = N+1 \\ i, j \geq 1}}    \big( [ D_\varphi (\varphi_i), \varphi_j ]_\mathsf{C} -  [ \varphi_i, D_\varphi (\varphi_j) ]_\mathsf{C} \big)  \\
&= \frac{1}{4} \sum_{ \substack{   i_1 + i_2 +j = N+1 \\ i_1, i_2, j \geq 1}} [[ \varphi_{i_1}, \varphi_{i_2}]_\mathsf{C}, \varphi_j ]_\mathsf{C}  - \frac{1}{4} \sum_{ \substack{   i+j_1 + j_2 = N+1 \\ i, j_1, j_2 \geq 1}} [ \varphi_i, [ \varphi_{j_1}, \varphi_{j_2}]_\mathsf{C} ]_\mathsf{C} \\
&= \frac{1}{2} \sum_{ \substack{   i+j+k = N+1 \\ i, j, k \geq 1}} [[ \varphi_{i}, \varphi_{j} ]_\mathsf{C}, \varphi_k ]_\mathsf{C} = 0.
\end{align*}
Hence the proof.
\end{proof}

The cohomology class $[ \mathrm{Ob}_{\varphi_t} ] \in {H^2_\varphi ( \mathfrak{g}, \mathfrak{h})} $ is called the {\em obstruction class} to extend the deformation $\varphi_t$.  As a consequence of (\ref{exten-eqn}), we now get the following result.

\begin{thm}
A finite order deformation $\varphi_t$ of a Hom-Lie algebra morphism $\varphi$ is extensible if and only if the corresponding obstruction class $[ \mathrm{Ob}_{\varphi_t} ] \in {H^2_\varphi ( \mathfrak{g}, \mathfrak{h})}$ vanishes.
\end{thm}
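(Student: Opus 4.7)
The plan is to observe that extensibility has been reformulated in the paragraph immediately preceding the theorem as the solvability, for $\varphi_{N+1}$, of the equation
\begin{align*}
D_\varphi(\varphi_{N+1}) \;=\; -\tfrac{1}{2} \sum_{\substack{i+j = N+1 \\ i,j \geq 1}} [\varphi_i, \varphi_j]_{\mathsf{C}} \;=\; \mathrm{Ob}_{\varphi_t},
\end{align*}
subject to the compatibility $\beta \circ \varphi_{N+1} = \varphi_{N+1} \circ \alpha$, i.e.\ subject to $\varphi_{N+1} \in C^1_{\mathrm{Hom}}(\mathfrak{g}, \mathfrak{h})$. Since the preceding proposition already shows $D_\varphi(\mathrm{Ob}_{\varphi_t}) = 0$, the obstruction is a $2$-cocycle in the complex $(C^\bullet_{\mathrm{Hom}}(\mathfrak{g}, \mathfrak{h}), D_\varphi)$ and hence defines a legitimate class $[\mathrm{Ob}_{\varphi_t}] \in H^2_\varphi(\mathfrak{g}, \mathfrak{h})$. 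With this in hand, the theorem reduces to the tautology: this cocycle is a coboundary (in $D_\varphi(C^1_{\mathrm{Hom}}(\mathfrak{g}, \mathfrak{h}))$) if and only if its cohomology class is zero.

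For the forward direction, I would assume $\varphi_t$ is extensible, take the witness $\varphi_{N+1} \in C^1_{\mathrm{Hom}}(\mathfrak{g}, \mathfrak{h})$ provided by the definition of extensibility, and read off from the displayed equation above that $\mathrm{Ob}_{\varphi_t} = D_\varphi(\varphi_{N+1})$. Hence $\mathrm{Ob}_{\varphi_t}$ is a $D_\varphi$-coboundary, and $[\mathrm{Ob}_{\varphi_t}] = 0$ in $H^2_\varphi(\mathfrak{g}, \mathfrak{h})$.

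For the converse, I would assume $[\mathrm{Ob}_{\varphi_t}] = 0$, pick any $\psi \in C^1_{\mathrm{Hom}}(\mathfrak{g}, \mathfrak{h})$ with $D_\varphi(\psi) = \mathrm{Ob}_{\varphi_t}$, and define $\varphi_{N+1} := \psi$. The condition $\beta \circ \varphi_{N+1} = \varphi_{N+1} \circ \alpha$ is built into the cochain space $C^1_{\mathrm{Hom}}(\mathfrak{g}, \mathfrak{h})$, while equation (\ref{exten-eqn}) holds by the choice of $\psi$. Therefore $\overline{\varphi_t} := \sum_{i=0}^{N+1} t^i \varphi_i$ satisfies the recursive conditions (\ref{defor-eqn}) for $n = 0, 1, \ldots, N+1$ and is an order $N+1$ deformation extending $\varphi_t$.

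There is no substantive obstacle: the real work has already been invested in setting up the differential graded Lie algebra $(C^\bullet_{\mathrm{Hom}}(\mathfrak{g}, \mathfrak{h}), [~,~]_{\mathsf{C}}, D_\varphi)$, in rewriting the order $N+1$ deformation equation as (\ref{exten-eqn}), and in verifying that $\mathrm{Ob}_{\varphi_t}$ is $D_\varphi$-closed. The proof of the theorem itself is a purely formal unwinding of these definitions, and it would be presented as a short two-direction argument with the displayed equation above as the pivot.
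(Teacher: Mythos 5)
Your proposal is correct and follows exactly the route the paper intends: the theorem is stated as an immediate consequence of the reformulation of extensibility as the solvability of $D_\varphi(\varphi_{N+1}) = \mathrm{Ob}_{\varphi_t}$ in equation (\ref{exten-eqn}), together with the preceding proposition showing $D_\varphi(\mathrm{Ob}_{\varphi_t}) = 0$, and the paper gives no further argument. Your explicit unwinding of the two directions is a faithful (and slightly more detailed) rendering of that same tautological argument.
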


\begin{corollary}
(i) If ${H^2_\varphi ( \mathfrak{g}, \mathfrak{h})} = 0$ then any finite order deformation of $\varphi$ is extensible.

(ii) If ${H^2_\varphi ( \mathfrak{g}, \mathfrak{h})} = 0$ then every $1$-cocycle in the cohomology complex of $\varphi$ is the infinitesimal of some formal deformation of $\varphi$.
\end{corollary}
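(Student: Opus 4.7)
The plan is to deduce both parts of the corollary as easy consequences of the preceding theorem and the cocycle interpretation of infinitesimals already established in the text. The core idea is to iteratively extend a finite order deformation to higher and higher orders, using the vanishing hypothesis on $H^2_\varphi(\mathfrak{g},\mathfrak{h})$ to kill the obstruction at each stage.

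For part (i), I would simply note that by the previous theorem, a finite order deformation $\varphi_t$ of $\varphi$ is extensible if and only if its obstruction class $[\mathrm{Ob}_{\varphi_t}] \in H^2_\varphi(\mathfrak{g},\mathfrak{h})$ vanishes. The hypothesis $H^2_\varphi(\mathfrak{g},\mathfrak{h}) = 0$ forces every such class to be zero, so every finite order deformation extends. This is essentially a one-line argument.

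For part (ii), I would start from a given $1$-cocycle $\varphi_1 \in C^1_\mathrm{Hom}(\mathfrak{g},\mathfrak{h})$, i.e.\ satisfying $\beta \circ \varphi_1 = \varphi_1 \circ \alpha$ and $D_\varphi(\varphi_1) = 0$. Unpacking the definition of $D_\varphi$ from \eqref{twisted-diff} and the formula for the cup product, the cocycle condition is precisely the order $n = 1$ case of the deformation equations \eqref{defor-eqn}. Hence $\varphi_t^{(1)} := \varphi + t\varphi_1$ is an order $1$ deformation of $\varphi$. Now apply part (i) inductively: having constructed an order $N$ deformation $\varphi_t^{(N)} = \sum_{i=0}^N t^i \varphi_i$, the hypothesis $H^2_\varphi(\mathfrak{g},\mathfrak{h}) = 0$ together with the theorem gives the existence of $\varphi_{N+1}$ extending it to an order $N+1$ deformation. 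The formal sum $\varphi_t = \sum_{i=0}^\infty t^i \varphi_i$ obtained in the limit is then a formal deformation whose linear term is the prescribed $\varphi_1$.

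There is no real obstacle: the only point requiring any care is checking that an order $1$ deformation is the same data as a $1$-cocycle of the complex $(C^\bullet_\mathrm{Hom}(\mathfrak{g},\mathfrak{h}), D_\varphi)$, but this is immediate from unwinding $D_\varphi(\varphi_1) = D\varphi_1 + [\varphi,\varphi_1]_{\mathsf{C}}$ and comparing with the $n = 1$ equation in \eqref{defor-eqn}, which was already observed in the discussion of infinitesimals. The rest is a standard inductive extension argument of Gerstenhaber type.
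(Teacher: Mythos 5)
Your proposal is correct and follows exactly the intended route: the paper states this corollary without proof as an immediate consequence of the preceding theorem, and your argument (part (i) directly from the vanishing of the obstruction class, part (ii) by identifying a $1$-cocycle with an order $1$ deformation and then extending inductively order by order) is the standard Gerstenhaber-type argument the authors have in mind. No gaps.
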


\medskip

\noindent {\bf Some other descriptions of the cup product bracket.}
Let $(\mathfrak{g}, [~,~], \alpha)$ be a Hom-Lie algebra. Here we give two new descriptions of the cup product operation (\ref{cupp}). Among others, we show that the cup product operation on cochains induces a trivial operation at the level of cohomology.


First, we define a map $\theta : C^n_\mathrm{Hom}(\mathfrak{g}, \mathfrak{g}) \rightarrow  C^{n+1}_\mathrm{Hom}(\mathfrak{g}, \mathfrak{g})$ by
\begin{align} \label{theta}
\theta f :=  - i_{f}  \mu, \text{ for } f \in C^n_{\mathrm{Hom}} (\mathfrak{g}, \mathfrak{g}).
\end{align}
Here $\mu \in C^2_{\mathrm{Hom}} (\mathfrak{g}, \mathfrak{g})$ is the element that corresponds to the Hom-Lie bracket of $\mathfrak{g}$.  Explicitly, we have
\begin{align*}
(\theta f) (x_1, \ldots, x_{n+1})  =  \sum_{i=1}^{n+1} (-1)^{n+i} ~[ f (x_1, \ldots, \widehat{ x_i}, \ldots, x_{n+1} ) ,\alpha^{n-1} (x_i) ],
\end{align*}
for $f \in C^n_\mathrm{Hom} (\mathfrak{g}, \mathfrak{g})$ and $x_1, \ldots , x_{n+1}\in \mathfrak{g}$. With this notation, we have the following.


\begin{lemma}\label{lemma-lemma}
Let $(\mathfrak{g}, [~,~], \alpha)$ be a Hom-Lie algebra. For any $P \in C^m_{\mathrm{Hom}} (\mathfrak{g}, \mathfrak{g})$ and $Q \in C^n_{\mathrm{Hom}} (\mathfrak{g}, \mathfrak{g})$, we have
\begin{align*}
[P, Q]_{\mathsf{C}} = (-1)^n \big(  i_P  (\theta Q) - \theta (i_P  Q) \big).
\end{align*}
\end{lemma}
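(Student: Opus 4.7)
The plan is to prove the identity pointwise by evaluating both sides on an arbitrary tuple $(x_1, \ldots, x_{m+n}) \in \mathfrak{g}^{m+n}$ and matching terms. All three of $[P, Q]_\mathsf{C}$, $i_P(\theta Q)$ and $\theta(i_P Q)$ lie in $C^{m+n}_\mathrm{Hom}(\mathfrak{g}, \mathfrak{g})$, so this is a legitimate pointwise comparison. First I would expand $i_P(\theta Q)(x_1, \ldots, x_{m+n})$ using the definitions of $i_P$ in (\ref{contraction-map}) and of $\theta$ in (\ref{theta}): the outer $i_P$ produces a sum over shuffles $\sigma \in Sh(m,n)$ with $P(x_{\sigma(1)}, \ldots, x_{\sigma(m)})$ in slot $1$ of $\theta Q$ and $\alpha^{m-1} x_{\sigma(m+1)}, \ldots, \alpha^{m-1} x_{\sigma(m+n)}$ in the remaining $n$ slots, and expanding $\theta Q$ introduces a further sum over $j \in \{1, \ldots, n+1\}$ singling out the slot promoted into the Hom-Lie bracket. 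The total splits as $i_P(\theta Q) = T_1 + T_2$, where $T_1$ collects the $j = 1$ terms (with $P(\ldots)$ in the bracket slot) and $T_2$ collects the $j \geq 2$ terms (with some $\alpha^{m-1} x_{\sigma(m+j-1)}$ in the bracket slot).

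For $T_1$ I would pull $\alpha^{m-1}$ inside $Q$ via the multiplicativity relation $Q \circ \alpha^{\wedge^n} = \alpha \circ Q$, then swap the two bracket entries using the skew-symmetry of $[~,~]$; this converts $(-1)^{n+1}$ into $(-1)^n$ and, after summing over $\sigma$, reproduces $(-1)^n [P, Q]_\mathsf{C}(x_1, \ldots, x_{m+n})$ exactly as in (\ref{cupp}). The remaining piece $T_2$ will be shown to equal $\theta(i_P Q)$ via a reindexing argument: for each $(\sigma, j)$ with $2 \leq j \leq n+1$, I set $i := \sigma(m+j-1)$ and let $\tau \in Sh(m, n-1)$ be the shuffle of $\{1, \ldots, m+n\} \setminus \{i\} \cong \{1, \ldots, m+n-1\}$ obtained by deleting the entry at position $m+j-1$ from $\sigma$. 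This is a bijection between the index set of $T_2$ and $\{1, \ldots, m+n\} \times Sh(m, n-1)$, and the tuple of $x$-arguments inside $Q$ becomes $x_1, \ldots, \widehat{x_i}, \ldots, x_{m+n}$ reshuffled by $\tau$; the inner sum over $\tau$ then reconstructs $(i_P Q)(x_1, \ldots, \widehat{x_i}, \ldots, x_{m+n})$, and the outer sum over $i$ matches the explicit formula for $\theta(i_P Q)$.

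The main obstacle is verifying that the sign $(-1)^\sigma (-1)^{n+j}$ of each term of $T_2$ reduces to $(-1)^\tau(-1)^{m+n-1+i}$, the sign dictated by $\theta(i_P Q)$. Setting $c := |\{k \leq m : \tau(k) \geq i\}|$, one checks (i) that $(-1)^\sigma = (-1)^\tau(-1)^{c}$, because the only inversions of $\sigma$ absent in $\tau$ come from pairing position $m+j-1$ against the first block of $\sigma$, with exactly $c$ such inversions; and (ii) that $j = i + 1 - m + c$, by counting second-block entries of $\tau$ strictly below $i$ and using that $|\{k \leq m : \tau(k) < i\}| = m - c$ together with $|\{k \leq m+n-1 : \tau(k) < i\}| = i - 1$. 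Substituting these, the combined sign collapses (the $c$-dependence cancels modulo $2$) to $(-1)^\tau(-1)^{m+n-1+i}$, as required. Combining $T_1 + T_2$ gives $i_P(\theta Q) = (-1)^n [P, Q]_\mathsf{C} + \theta(i_P Q)$, which rearranges to the claimed identity.
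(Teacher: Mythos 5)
Your proof is correct and follows essentially the same route as the paper's: expand $i_P(\theta Q)$ as a shuffle sum, split according to whether $P(\ldots)$ or one of the $\alpha^{m-1}x_{\sigma(k)}$ occupies the distinguished bracket slot of $\theta Q$, identify the former contribution with $(-1)^n[P,Q]_\mathsf{C}$ via skew-symmetry and multiplicativity, and the latter with $\theta(i_PQ)$ by a reindexing of shuffles. The only difference is cosmetic bookkeeping: the paper organizes the reindexing through the three-block shuffle set $Sh(m,n-1,1)$ and the identity $\theta f=-i_f\mu$, whereas you verify the sign match $(-1)^\sigma(-1)^{n+j}=(-1)^\tau(-1)^{m+n-1+i}$ directly by inversion counting, which checks out.
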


\begin{proof}
For any $x_1, \ldots , x_{m+n}\in \mathfrak{g}$, we have
\begin{align} 
    & \theta (i_P Q)(x_1, \ldots , x_{m+n})  \nonumber\\ \nonumber
    ~&~ = -\sum_{\sigma \in Sh(m+n-1,1)} (-1)^\sigma ~[(i_P Q)(x_{\sigma (1)},\ldots , x_{\sigma (m+n-1)}),\alpha^{m+n-2} x_{\sigma (m+n)}]\\ \nonumber
    ~&~ =- \sum_{\sigma \in Sh(m+n-1,1)} (-1)^\sigma \sum_{\tau \in Sh(m,n-1)} (-1)^\tau ~ \big[Q\big(P(x_{\tau \sigma (1)}, \ldots ,x_{\tau \sigma (m)} ),\alpha^{m-1}x_{\tau \sigma (m+1)}, \ldots   \big),\\ \nonumber
    ~&~\qquad \qquad \qquad \qquad \qquad \qquad \qquad \qquad \qquad \qquad   \alpha^{m+n-2}x_{\tau \sigma (m+n)}\big]\\ 
    ~&~ = - \sum_{\gamma \in Sh(m,n-1,1)} (-1)^\gamma ~ \big[ Q\big(P(x_{\gamma (1)}, \ldots , x_{\gamma (m)}),\alpha^{m-1}x_{ \gamma (m+1)}, \ldots , \alpha^{m-1} x_{ \gamma (m+n-1)}  \big), \alpha^{m+n-2}x_{ \gamma (m+n)}    \big]. \label{iPQ}
\end{align}
On the other hand,
\begin{align}
   & (i_P(\theta Q)) (x_1, \ldots , x_{m+n}) \nonumber \\
    ~& = -i_P (i_Q \mu) (x_1, \ldots , x_{m+n})  \nonumber\\
    ~& = -\sum_{\sigma \in Sh(m,n)} (-1)^\sigma ~(i_Q \mu) \big( P(x_{\sigma (1)}, \ldots , x_{\sigma (m)}), \alpha^{m-1} x_{\sigma (m+1)}, \ldots ,\alpha^{m-1} x_{\sigma (m+n)}   \big) \nonumber\\
    ~& =-  \sum_{\sigma \in Sh(m,n)} (-1)^\sigma ~\Big(  \sum_{\tau \in Sh(n-1,1)} (-1)^\tau ~\big[ Q \big(  P(x_{\sigma (1)}, \ldots , x_{\sigma (m)}), \ldots , \alpha^{m-1} x_{\tau \sigma (m+n-1)}  \big), 
      \alpha^{m+n-2}x_{\tau \sigma (m+n)}\big]  \nonumber  \\
      & \qquad \qquad \qquad \qquad \qquad \qquad  + (-1)^n \big[ \alpha^{m-1}Q(x_{\sigma (m+1)}, \ldots , x_{\sigma (m+n)}), \alpha^{n-1}P(x_{\sigma (1)}, \ldots , x_{\sigma (m)})  \big]    \Big) \nonumber \\
    ~&= -  \sum_{\sigma \in Sh(m,n)} (-1)^\sigma \sum_{\tau \in Sh(n-1,1)} (-1)^\tau~ \big[ Q \big(  P(x_{\sigma (1)}, \ldots , x_{\sigma (m)}),  \alpha^{m-1} x_{\tau \sigma (m+1)}, \ldots  \big), \alpha^{m+n-2}x_{\tau \sigma (m+n)}\big] \nonumber \\
    & \qquad  \qquad \qquad  - (-1)^n \sum_{\sigma \in Sh(m,n)} (-1)^\sigma~ \big[ \alpha^{m-1}Q(x_{\sigma (m+1)}, \ldots , x_{\sigma (m+n)}), \alpha^{n-1}P(x_{\sigma (1)}, \ldots , x_{\sigma (m)})  \big] \nonumber \\
    ~& = -\sum_{\gamma \in Sh(m,n-1,1)} (-1)^\gamma ~ \big[ Q\big(P(x_{\gamma (1)}, \ldots , x_{\gamma (m)}),\alpha^{m-1}x_{ \gamma (m+1)}, \ldots , \alpha^{m-1} x_{ \gamma (m+n-1)}  \big), \alpha^{m+n-2}x_{ \gamma (m+n)}    \big]  \nonumber \\
    ~& \qquad \qquad \qquad +(-1)^n \sum_{\sigma \in Sh(m,n)} (-1)^\sigma ~\big[ \alpha^{n-1}P(x_{\sigma (1)}, \ldots , x_{\sigma (m)}) ,\alpha^{m-1}Q(x_{\sigma (m+1)}, \ldots , x_{\sigma (m+n)}) \big] \nonumber \\
    ~& = ~\theta (i_P Q)(x_1, \ldots , x_{m+n}) +(-1)^n [P,Q]_{\mathsf{C}}(x_1, \ldots , x_{m+n})   \qquad  (\text{using (\ref{iPQ}) and the definition of $[~,~]_{\mathsf{C}}$}). \nonumber
\end{align}
Hence we can conclude that $[P,Q]_{\mathsf{C}}= (-1)^n \big(i_P \theta Q- \theta (i_P Q)  \big)$.
\end{proof}
 
 \begin{proposition}\label{cup-new-new}
 For any $P \in C^m_{\mathrm{Hom}} (\mathfrak{g}, \mathfrak{g})$ and $Q \in C^n_{\mathrm{Hom}} (\mathfrak{g}, \mathfrak{g})$, we have
 \begin{align*}
 [P, Q ]_{\mathsf{C}} = i_P (\delta_{\mathrm{Hom}} Q ) + (-1)^{m-1}i_{ (\delta_{\mathrm{Hom}} P)} Q  + (-1)^m ~\delta_{\mathrm{Hom}} ( i_P Q).
 \end{align*}
 \end{proposition}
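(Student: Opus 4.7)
The plan is to derive this proposition from Lemma~\ref{lemma-lemma} by translating the operator $\theta$ into the differential $\delta_{\mathrm{Hom}}$. Since $\delta_{\mathrm{Hom}} = -[\mu, -]_{\mathsf{NR}}$ and $\theta f = -i_f \mu$ by \eqref{theta}, a direct expansion of the Nijenhuis--Richardson bracket gives, for any $f \in C^n_{\mathrm{Hom}}(\mathfrak{g}, \mathfrak{g})$,
\[
\delta_{\mathrm{Hom}} f = -i_\mu f + (-1)^{n-1} i_f \mu = -i_\mu f + (-1)^n \theta f,
\]
which rearranges to the key identity $\theta f = (-1)^n(\delta_{\mathrm{Hom}} f + i_\mu f)$.

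Next, I substitute this expression for $\theta Q$ (of degree $n$) and $\theta(i_P Q)$ (of degree $m+n-1$) into the compact formula $[P, Q]_{\mathsf{C}} = (-1)^n(i_P(\theta Q) - \theta(i_P Q))$ from Lemma~\ref{lemma-lemma}. Collecting signs, and noting $(-1)^{2n+m-1} = (-1)^{m-1}$, one obtains
\[
[P, Q]_{\mathsf{C}} = i_P(\delta_{\mathrm{Hom}} Q) + (-1)^m \delta_{\mathrm{Hom}}(i_P Q) + \bigl(i_P(i_\mu Q) + (-1)^m i_\mu(i_P Q)\bigr).
\]
It therefore suffices to show that the bracketed quantity equals $(-1)^{m-1} i_{\delta_{\mathrm{Hom}} P} Q$.

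For this final step, I use the graded pre-Lie relation for the insertion operator, applied to the triple $(P, \mu, Q)$,
\[
i_P(i_\mu Q) - i_{i_P \mu} Q = (-1)^{m-1}\bigl(i_\mu(i_P Q) - i_{i_\mu P} Q\bigr),
\]
which is equivalent to the graded Jacobi identity for $[~,~]_{\mathsf{NR}}$. Rearranging and using the expansion $\delta_{\mathrm{Hom}} P = -i_\mu P + (-1)^{m-1} i_P \mu$, the two contributions of the form $i_\mu(i_P Q)$ cancel (since $(-1)^{m-1} + (-1)^m = 0$) while the remaining terms collapse precisely to $(-1)^{m-1} i_{\delta_{\mathrm{Hom}} P} Q$, completing the argument.

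The main technical obstacle is checking that the pre-Lie identity survives in the Hom setting, since definition \eqref{contraction-map} of $i$ carries the $\alpha^{k-1}$ twist factors absent in the classical case. Fortunately the multiplicativity condition $f(\alpha^{k-1} x_1, \ldots, \alpha^{k-1} x_a) = \alpha^{k-1} f(x_1, \ldots, x_a)$ for $f \in C^a_{\mathrm{Hom}}$ allows the twists to propagate through the shuffle sums unobstructed, and a (tedious) term-by-term verification shows that the classical identity carries over unchanged. Alternatively, one can bypass the pre-Lie step by expanding both sides of the proposition directly in terms of the shuffle formulas for $i$ and $\delta_{\mathrm{Hom}}$ and matching them combinatorially; this is longer but avoids any structural appeal beyond the definitions.
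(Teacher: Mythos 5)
Your proof is correct and follows essentially the same route as the paper's: both rest on Lemma \ref{lemma-lemma}, the expansion $\delta_{\mathrm{Hom}} = -[\mu,-]_{\mathsf{NR}}$ written out via the insertion operator, and the graded right pre-Lie identity of $P\odot Q = i_Q P$ applied to the triple $(Q,\mu,P)$; the paper merely runs the computation in the opposite direction, starting from the right-hand side and collapsing it to $(-1)^n\bigl(i_P(\theta Q)-\theta(i_PQ)\bigr)$. Your explicit caution about verifying the pre-Lie identity in the presence of the $\alpha$-twists is reasonable (the paper asserts it without proof), and your sign bookkeeping checks out.
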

 
 \begin{proof}

Expanding the RHS, we get
    \begin{align*}
        &i_P (\delta_{\mathrm{Hom}} Q ) + (-1)^{m-1}i_{ (\delta_{\mathrm{Hom}} P)} Q  + (-1)^m ~\delta_{\mathrm{Hom}} ( i_P Q)\\
        &=-i_P [\mu , Q]_{\mathsf{NR}} - (-1)^{m-1} i_{[\mu , P]_{\mathsf{NR}}}Q - (-1)^m [\mu, i_PQ]_{\mathsf{NR}}\\
        &= -i_P i_{\mu} Q + (-1)^{n-1} i_P i_Q \mu - (-1)^{m-1} i_{i_{\mu}P}Q + i_{i_P \mu}Q - (-1)^m i_{\mu} i_P Q +(-1)^m(-1)^{m+n-2}i_{i_P Q}\mu\\
        &= (-1)^n \big( -i_P i_Q \mu + i_{i_P Q}\mu \big) +i_{i_P \mu} Q- i_P i_{\mu} Q -(-1)^{m-1}\big( i_{i_{\mu}P}Q - i_{\mu} i_P Q \big)\\
        &= (-1)^n \big(i_P \theta Q - \theta (i_P Q)\big) \qquad (\text{by the graded right pre-Lie identity of $P\odot Q= i_Q P$})\\
        &= [P,Q]_\mathsf{C} \quad (\text{by Lemma \ref{lemma-lemma}}).
    \end{align*}
This proves the result.
 \end{proof}

 
 \begin{proposition}\label{delta-hom-derivation}
 For any $P \in C^m_{\mathrm{Hom}} (\mathfrak{g}, \mathfrak{g})$ and $Q \in C^n_{\mathrm{Hom}} (\mathfrak{g}, \mathfrak{g})$, we have
 \begin{align*}
 \delta_{\mathrm{Hom}} [P, Q]_{\mathsf{C}} =  [\delta_{\mathrm{Hom}}  P, Q]_{\mathsf{C}} + (-1)^m [P, \delta_{\mathrm{Hom}} Q]_{\mathsf{C}}.
 \end{align*}
 \end{proposition}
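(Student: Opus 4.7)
The plan is to deduce the graded Leibniz rule for $\delta_{\mathrm{Hom}}$ from the alternative description of the cup product given in Proposition \ref{cup-new-new}, combined with the basic fact $\delta_{\mathrm{Hom}}^2 = 0$ (which is part of the cohomology construction). The reason this approach is natural is that, after expanding $[P,Q]_{\mathsf{C}}$ via Proposition \ref{cup-new-new}, the derivation identity becomes essentially a bookkeeping calculation: the terms of $[\delta_{\mathrm{Hom}} P, Q]_{\mathsf{C}}$ and $[P, \delta_{\mathrm{Hom}} Q]_{\mathsf{C}}$ that contain a double application of $\delta_{\mathrm{Hom}}$ vanish, and the remaining terms reorganize into $\delta_{\mathrm{Hom}}[P,Q]_{\mathsf{C}}$.

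More precisely, the first step is to apply $\delta_{\mathrm{Hom}}$ to the identity in Proposition \ref{cup-new-new}, obtaining
\[
\delta_{\mathrm{Hom}}[P,Q]_{\mathsf{C}} = \delta_{\mathrm{Hom}}\bigl( i_P (\delta_{\mathrm{Hom}} Q) \bigr) + (-1)^{m-1} \delta_{\mathrm{Hom}}\bigl( i_{\delta_{\mathrm{Hom}} P} Q \bigr),
\]
since $\delta_{\mathrm{Hom}}^2(i_P Q) = 0$. The second step is to apply Proposition \ref{cup-new-new} to both $[\delta_{\mathrm{Hom}} P, Q]_{\mathsf{C}}$ (now with $\delta_{\mathrm{Hom}} P \in C^{m+1}_{\mathrm{Hom}}(\mathfrak{g},\mathfrak{g})$, so the sign conventions shift $m \mapsto m+1$) and $[P, \delta_{\mathrm{Hom}} Q]_{\mathsf{C}}$, and again use $\delta_{\mathrm{Hom}}^2 = 0$ on the cochains $P$ and $Q$ to drop the $i_{\delta_{\mathrm{Hom}}^2 P} Q$ and $i_P(\delta_{\mathrm{Hom}}^2 Q)$ terms. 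This leaves
\begin{align*}
[\delta_{\mathrm{Hom}} P, Q]_{\mathsf{C}} &= i_{\delta_{\mathrm{Hom}} P}(\delta_{\mathrm{Hom}} Q) + (-1)^{m+1}\delta_{\mathrm{Hom}}\bigl(i_{\delta_{\mathrm{Hom}} P} Q\bigr), \\
(-1)^m [P, \delta_{\mathrm{Hom}} Q]_{\mathsf{C}} &= - i_{\delta_{\mathrm{Hom}} P}(\delta_{\mathrm{Hom}} Q) + \delta_{\mathrm{Hom}}\bigl(i_P (\delta_{\mathrm{Hom}} Q)\bigr).
\end{align*}
The third step is simply to add these two expressions: the $i_{\delta_{\mathrm{Hom}} P}(\delta_{\mathrm{Hom}} Q)$ terms cancel, and the surviving terms match $\delta_{\mathrm{Hom}}[P,Q]_{\mathsf{C}}$ exactly from the first step.

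There is no serious obstacle here; the only thing to be a little careful about is the sign conventions in Proposition \ref{cup-new-new} when one of the arguments has its degree raised by one. In that step, one should verify the signs by keeping $m \mapsto m+1$ (for $[\delta_{\mathrm{Hom}} P, Q]_{\mathsf{C}}$) and by treating $\delta_{\mathrm{Hom}} Q$ as a cochain of degree $n+1$ (for $[P, \delta_{\mathrm{Hom}} Q]_{\mathsf{C}}$). An alternative route, which would be slightly heavier, is to write $\delta_{\mathrm{Hom}} f = -[\mu, f]_{\mathsf{NR}}$ and prove that $\mathrm{ad}_\mu$ is a graded derivation of $[~,~]_{\mathsf{C}}$, but that essentially amounts to the action statement that the paper establishes only in Section \ref{sec4}, so the route via Proposition \ref{cup-new-new} is more economical at this point in the paper.
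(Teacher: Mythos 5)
Your proposal is correct and follows essentially the same route as the paper: both expand $\delta_{\mathrm{Hom}}[P,Q]_{\mathsf{C}}$, $[\delta_{\mathrm{Hom}}P,Q]_{\mathsf{C}}$ and $[P,\delta_{\mathrm{Hom}}Q]_{\mathsf{C}}$ via Proposition \ref{cup-new-new}, kill the double-differential terms with $\delta_{\mathrm{Hom}}^2=0$, and cancel the two $i_{(\delta_{\mathrm{Hom}}P)}(\delta_{\mathrm{Hom}}Q)$ terms. Your sign bookkeeping (including the shift $m\mapsto m+1$) matches the paper's computation exactly.
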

 
 \begin{proof}
From Proposition \ref{cup-new-new}, we have
\begin{align*}
\delta_{\mathrm{Hom}} [P,Q]_\mathsf{C} = \delta_{\mathrm{Hom}}  (i_P (\delta_{\mathrm{Hom}} Q )) + (-1)^{m-1} \delta_{\mathrm{Hom}} ( i_{(\delta_{\mathrm{Hom}} P)} Q ).
\end{align*}
On the other hand, by applying the same proposition, we get
\begin{align*}
&[\delta_{\mathrm{Hom}} P, Q ]_\mathsf{C} ~+~ (-1)^m ~[ P, \delta_{\mathrm{Hom}} Q ]_\mathsf{C} \\
&= \cancel{i_{(\delta_{\mathrm{Hom}} P)}(\delta_{\mathrm{Hom}} Q)} + (-1)^{m+1} \delta_{\mathrm{Hom}}  ( i_{(\delta_{\mathrm{Hom}} P)}Q)  \\
& \qquad \qquad + (-1)^m (-1)^{m-1} \cancel{ i_{(\delta_{\mathrm{Hom}} P)}(\delta_{\mathrm{Hom}} Q)}  + (-1)^m (-1)^m \delta_{\mathrm{Hom}} (  i_P (\delta_\mathrm{Hom} Q)) \\
&= \delta_{\mathrm{Hom}} (  i_P (\delta_\mathrm{Hom} Q)) + (-1)^{m+1} \delta_{\mathrm{Hom}}  ( i_{(\delta_{\mathrm{Hom}} P)} Q  ).
\end{align*}
Hence, the result follows.
 \end{proof}

 The above proposition says that the differential $\delta_\mathrm{Hom}$ is a graded derivation for the cup product bracket $[~,~]_\mathsf{C}$. In other words, $( C^\bullet_\mathrm{Hom} (\mathfrak{g}, \mathfrak{g}), [~,~]_\mathsf{C}, \delta_\mathrm{Hom})$ is a differential graded Lie algebra. In particular, it says that the cup product bracket $[~,~]_\mathsf{C}$ induces an operation on the graded space of cohomology $H^\bullet_\mathrm{Hom} (\mathfrak{g}, \mathfrak{g})$. However, as a consequence of Proposition \ref{cup-new-new}, we obtain the following result.

 \begin{thm}
     Let $(\mathfrak{g}, [~,~], \alpha)$ be a Hom-Lie algebra. Then the induced cup product operation at the level of cohomology is trivial.
 \end{thm}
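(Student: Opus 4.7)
The plan is to deduce the triviality of the induced cup product directly from Proposition \ref{cup-new-new}, which gives the key identity
\begin{align*}
[P, Q]_{\mathsf{C}} = i_P (\delta_{\mathrm{Hom}} Q) + (-1)^{m-1} i_{(\delta_{\mathrm{Hom}} P)} Q + (-1)^m \, \delta_{\mathrm{Hom}}(i_P Q).
\end{align*}
This formula is precisely the computational heart of the theorem, so no new calculations should be required; the work has already been done in establishing that formula.

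First, I would recall that the induced operation on cohomology is well-defined thanks to Proposition \ref{delta-hom-derivation}, which asserts that $\delta_\mathrm{Hom}$ is a graded derivation for $[~,~]_\mathsf{C}$. In particular, the cup product of two cocycles is again a cocycle, and changing a cocycle by a coboundary alters the cup product only by a coboundary. Hence it suffices to show that for any two cocycles $P \in C^m_\mathrm{Hom}(\mathfrak{g}, \mathfrak{g})$ and $Q \in C^n_\mathrm{Hom}(\mathfrak{g}, \mathfrak{g})$, the class $[[P, Q]_\mathsf{C}]$ in $H^{m+n}_\mathrm{Hom}(\mathfrak{g}, \mathfrak{g})$ vanishes.

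Next, I would substitute $\delta_\mathrm{Hom} P = 0$ and $\delta_\mathrm{Hom} Q = 0$ into the identity from Proposition \ref{cup-new-new}. The first two terms on the right-hand side vanish immediately, leaving
\begin{align*}
[P, Q]_\mathsf{C} = (-1)^m \, \delta_\mathrm{Hom}(i_P Q).
\end{align*}
Thus $[P, Q]_\mathsf{C}$ is a coboundary, which means that its cohomology class is zero. This establishes that the induced cup product on $H^\bullet_\mathrm{Hom}(\mathfrak{g}, \mathfrak{g})$ is identically trivial.

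The main obstacle in this argument has already been overcome in proving Proposition \ref{cup-new-new} (and the supporting Lemma \ref{lemma-lemma}); there is no further combinatorial difficulty. One small point worth checking is that $i_P Q$ genuinely lies in the Hom-complex, i.e.\ that it commutes with $\alpha$ in the appropriate sense so that $\delta_\mathrm{Hom}(i_P Q)$ makes sense as a Hom-cochain. This follows from the construction of the contraction $i_P Q$ via formula (\ref{contraction-map}), since the Hom-equivariance of $P$ and $Q$ together with multiplicativity of $\alpha$ ensures that $i_P Q \in C^{m+n-1}_\mathrm{Hom}(\mathfrak{g}, \mathfrak{g})$, and therefore $\delta_\mathrm{Hom}$ may be applied to it.
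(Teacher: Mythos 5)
Your proposal is correct and is exactly the argument the paper intends: the theorem is stated as an immediate consequence of Proposition \ref{cup-new-new}, and specializing that identity to cocycles $P,Q$ gives $[P,Q]_\mathsf{C} = (-1)^m\,\delta_{\mathrm{Hom}}(i_P Q)$, a coboundary. Your additional remarks on well-definedness (via Proposition \ref{delta-hom-derivation}) and on $i_P Q$ lying in the Hom-complex are sound and only make the argument more complete.
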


 When $\mathfrak{g}$ is a Lie algebra (i.e. $\alpha = \mathrm{id}_\mathfrak{g}$), we recover the fact that the induced cup product operation at the level of Chevalley-Eilenberg cohomology is trivial \cite{nij-ric-mor}.
 

\medskip

Let $\delta^\mathrm{tr}_\mathrm{Hom} : C^n_\mathrm{Hom} (\mathfrak{g}, \mathfrak{g}) \rightarrow C^{n+1}_\mathrm{Hom} (\mathfrak{g}, \mathfrak{g})$, for $n \geq 0$, be the differential of the Hom-Lie algebra $(\mathfrak{g}, [~,~], \alpha)$ with coefficients in $\mathfrak{g}$ but with the trivial action. Then we have $\delta^\mathrm{tr}_\mathrm{Hom} (f) = - i_\mu f$, for $f \in C^n_\mathrm{Hom} (\mathfrak{g}, \mathfrak{g})$. It is easy to see that $\delta^\mathrm{tr}_\mathrm{Hom} $ is also a graded derivation for the cup product bracket. Note that, we have $\delta_\mathrm{Hom} f = \delta_\mathrm{Hom}^\mathrm{tr} f + (-1)^n \theta f$, for $f \in C^n_\mathrm{Hom} (\mathfrak{g}, \mathfrak{g})$. That is,
\begin{align*}
    \theta f = (-1)^n (\delta_\mathrm{Hom} - \delta_\mathrm{Hom}^\mathrm{tr}) (f), \text{ for } f \in C^n_\mathrm{Hom} (\mathfrak{g}, \mathfrak{g}).
\end{align*}
Since $\delta_\mathrm{Hom}$ and $\delta_\mathrm{Hom}^\mathrm{tr}$ are both graded derivations for the cup product bracket, we have the following derivation-like property of the map $\theta$.
This result will be useful in constructing the derived bracket in Section \ref{sec5}.

\begin{proposition}\label{tr-theta}
For any $P \in C^m_\mathrm{Hom} (\mathfrak{g}, \mathfrak{g}),~ Q \in C^n_\mathrm{Hom}(\mathfrak{g}, \mathfrak{g}),$ we have 
\begin{align*}
\theta  [P, Q]_{\mathsf{C}}  =~& (-1)^n [ \theta (P), Q]_{\mathsf{C}} + [P, \theta (Q)]_{\mathsf{C}}.
\end{align*}
\end{proposition}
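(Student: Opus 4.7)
The plan is to avoid any direct computation on $\theta$ itself and instead exploit the identification $\theta = (-1)^{\,\cdot\,}(\delta_{\mathrm{Hom}} - \delta_{\mathrm{Hom}}^{\mathrm{tr}})$ recorded in the paragraph preceding the statement. Both $\delta_{\mathrm{Hom}}$ (by Proposition \ref{delta-hom-derivation}) and $\delta_{\mathrm{Hom}}^{\mathrm{tr}}$ (by the paragraph preceding the statement) are already known to be graded derivations of degree $+1$ for $[~,~]_{\mathsf{C}}$. Therefore their difference $\Delta := \delta_{\mathrm{Hom}} - \delta_{\mathrm{Hom}}^{\mathrm{tr}}$ is also a graded derivation of degree $+1$, and $\theta$ is obtained from $\Delta$ by the degree-dependent sign twist $\theta f = (-1)^n \Delta f$ on $f \in C^n_{\mathrm{Hom}}(\mathfrak{g},\mathfrak{g})$. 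The whole argument then reduces to bookkeeping of signs.

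Concretely, I would begin by writing, for $P \in C^m_{\mathrm{Hom}}(\mathfrak{g},\mathfrak{g})$ and $Q \in C^n_{\mathrm{Hom}}(\mathfrak{g},\mathfrak{g})$, the chain of equalities
\begin{align*}
\theta [P,Q]_{\mathsf{C}} &= (-1)^{m+n}\, \Delta [P,Q]_{\mathsf{C}} \\
&= (-1)^{m+n} \bigl( [\Delta P, Q]_{\mathsf{C}} + (-1)^m [P, \Delta Q]_{\mathsf{C}} \bigr),
\end{align*}
where the second equality uses the derivation rule applied to $\delta_{\mathrm{Hom}}$ and $\delta_{\mathrm{Hom}}^{\mathrm{tr}}$ separately and then subtracted. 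Substituting $\Delta P = (-1)^m \theta P$ and $\Delta Q = (-1)^n \theta Q$ and collecting signs gives $(-1)^{m+n}\cdot(-1)^m = (-1)^n$ on the first term and $(-1)^{m+n}\cdot(-1)^m\cdot(-1)^n = 1$ on the second, which is precisely $(-1)^n [\theta P, Q]_{\mathsf{C}} + [P, \theta Q]_{\mathsf{C}}$.

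There is no real obstacle here beyond verifying two preliminary points that the paper either states or can be checked in one line: (i) that $\delta_{\mathrm{Hom}}^{\mathrm{tr}}$ is indeed a graded derivation of $[~,~]_{\mathsf{C}}$ (this is essentially the same computation as Proposition \ref{delta-hom-derivation}, or can be seen from the formula $\delta_{\mathrm{Hom}}^{\mathrm{tr}} f = -i_\mu f$ combined with pre-Lie type identities), and (ii) the sign relation $\theta f = (-1)^n \Delta f$, which is just the rearrangement of $\delta_{\mathrm{Hom}} f = \delta_{\mathrm{Hom}}^{\mathrm{tr}} f + (-1)^n \theta f$ given in the excerpt. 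The main thing to be careful about is that the derivation rule for $\delta_{\mathrm{Hom}}$ and $\delta_{\mathrm{Hom}}^{\mathrm{tr}}$ must be applied to operators of the \emph{same} degree $+1$, which is why no extra sign arises when one subtracts the two derivation identities before reintroducing $\theta$ via the twist.
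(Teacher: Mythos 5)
Your proposal is correct and is exactly the argument the paper intends: the paper states Proposition \ref{tr-theta} as an immediate consequence of the identity $\theta f = (-1)^n(\delta_{\mathrm{Hom}}-\delta_{\mathrm{Hom}}^{\mathrm{tr}})(f)$ together with the fact that $\delta_{\mathrm{Hom}}$ and $\delta_{\mathrm{Hom}}^{\mathrm{tr}}$ are both degree $+1$ graded derivations of $[~,~]_{\mathsf{C}}$, and your sign bookkeeping $(-1)^{m+n}\cdot(-1)^m=(-1)^n$ and $(-1)^{m+n}\cdot(-1)^{m+n}=1$ is right. The only caveat, which you already flag, is that the derivation property of $\delta_{\mathrm{Hom}}^{\mathrm{tr}}$ is asserted rather than proved in the paper, so your proof inherits that unverified (but routine) step on the same terms as the paper itself.
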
 





\section{Fr\"olicher-Nijenhuis bracket for Hom-Lie algebras} \label{sec4}

Given a Hom-Lie algebra, we first show a graded Lie algebra action of the Nijenhuis-Richardson graded Lie algebra on the cup product graded Lie algebra. As a byproduct, we obtain a new graded Lie bracket (which we call the Fr\"{o}licher-Nijenhuis bracket) on the space of cochains of a Hom-Lie algebra. Next, we study Nijenhuis operators on a Hom-Lie algebra in terms of the Fr\"{o}licher-Nijenhuis bracket. Finally, we show that the Nijenhuis-Richardson algebra and the Fr\"{o}licher-Nijenhuis algebra form a matched pair of graded Lie algebras.

Let $(\mathcal{A}, [~,~]_\mathcal{A})$ and $(\mathcal{B}, [~,~]_\mathcal{B})$ be two graded Lie algebras. Then a {\em graded Lie algebra action} of $(\mathcal{A}, [~,~]_\mathcal{A})$ on $(\mathcal{B}, [~,~]_\mathcal{B})$ is a degree $0$ bilinear map $\rho: \mathcal{A} \times \mathcal{B} \rightarrow \mathcal{B},~ (a,b) \mapsto \rho (a) (b)$ that satisfies
\begin{align}
\rho ([a, a']_\mathcal{A})(b) =~& \rho (a) \rho (a')(b) - (-1)^{|a||a'|}  \rho (a') \rho (a)(b), \label{action1}\\
\rho (a) ([b, b']_\mathcal{B}) =~& [ \rho (a) b, b']_\mathcal{B} + (-1)^{|a||b|} [b, \rho (a) b']_\mathcal{B}, \label{action2}
\end{align}
for all homogeneous elements $a, a' \in \mathcal{A}$ and $b, b' \in \mathcal{B}$. The first condition says that the map $\mathcal{A} \rightarrow \mathrm{End} (\mathcal{B})$ is a morphism of graded Lie algebras, while the second condition implies that $\rho(a)$ is a graded derivation of degree $|a|$ on the graded Lie algebra $(\mathcal{B}, [~,~]_\mathcal{B})$. Thus, a graded Lie algebra action as above is simply given by a graded Lie algebra morphism $\mathcal{A} \rightarrow \mathrm{Der} ( \mathcal{B}, [~,~]_\mathcal{B})$.

The above notion of a graded Lie algebra action is a generalization of the Lie algebra action of a Lie algebra on another Lie algebra. It is well-known that a Lie algebra action gives rise to the semidirect product. The corresponding graded version is given by the following.

\begin{proposition}\label{semidirect} Let $(\mathcal{A}, [~,~]_\mathcal{A})$ and $(\mathcal{B}, [~,~]_\mathcal{B})$ be two graded Lie algebras and let $\rho : \mathcal{A} \times \mathcal{B} \rightarrow \mathcal{B}$ defines an action of the graded Lie algebra $\mathcal{A}$ on $\mathcal{B}$. Then the direct sum $\mathcal{A} \oplus \mathcal{B}$ carries a graded Lie algebra structure with the bracket given by
\begin{align*}
[(a, b), (a', b')]_\ltimes := ( [a, a']_\mathcal{A}, ~[b, b']_\mathcal{B} + \rho (a) b' - (-1)^{|b||a'|} \rho (a') b ),
\end{align*} 
for homogeneous elements $(a, b), (a', b') \in \mathcal{A} \oplus \mathcal{B}$.
\end{proposition}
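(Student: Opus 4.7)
The plan is to verify the graded Lie algebra axioms for $(\mathcal{A} \oplus \mathcal{B}, [~,~]_\ltimes)$ directly, using the gradation $(\mathcal{A} \oplus \mathcal{B})^k = \mathcal{A}^k \oplus \mathcal{B}^k$ so that a homogeneous element $(a,b)$ of degree $k$ satisfies $|a|=|b|=k$. First I would observe that the defining formula is visibly bilinear and degree-preserving, since $[~,~]_\mathcal{A}$, $[~,~]_\mathcal{B}$, and $\rho$ each have degree $0$. Thus the only real content is graded skew-symmetry and the graded Jacobi identity.

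For graded skew-symmetry, I would split the bracket into its two components. On the $\mathcal{A}$-slot the claim is immediate from skew-symmetry of $[~,~]_\mathcal{A}$. On the $\mathcal{B}$-slot, I would compare
\[
[b,b']_\mathcal{B} + \rho(a) b' - (-1)^{|b||a'|} \rho(a') b
\]
with $-(-1)^{|(a,b)||(a',b')|}$ times the expression obtained by swapping the primed and unprimed letters. Using $|a|=|b|$ and $|a'|=|b'|$, the sign $(-1)^{|a||a'|+|b||b'|}$ collapses to $+1$, and the three terms match after applying the skew-symmetry of $[~,~]_\mathcal{B}$ and rewriting the sign exponents.

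For the graded Jacobi identity, I would compute $[(a,b),[(a',b'),(a'',b'')]_\ltimes]_\ltimes$ and its two cyclically permuted (and signed) copies, and show that their sum vanishes componentwise. The $\mathcal{A}$-component is exactly the graded Jacobi identity for $(\mathcal{A},[~,~]_\mathcal{A})$. The $\mathcal{B}$-component decomposes naturally into three types of terms: the pure $[~,~]_\mathcal{B}$-terms (which cancel by the graded Jacobi identity for $\mathcal{B}$); terms containing a single $\rho$, which cancel in pairs by the derivation property \eqref{action2} applied to brackets of the form $\rho(a)([b',b'']_\mathcal{B})$; and terms containing two $\rho$'s or a $\rho$ applied to an $\mathcal{A}$-bracket, which cancel by the morphism property \eqref{action1} applied to $\rho([a,a']_\mathcal{A})b''$ and its analogues. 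I would organize the bookkeeping by first writing out all three iterated brackets, grouping terms by how many $\rho$'s appear, and then invoking the three identities in turn.

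The routine part is just the first component and the pure $[~,~]_\mathcal{B}$ portion of the second component. The main obstacle is purely combinatorial: tracking the Koszul signs $(-1)^{|b||a'|}$, $(-1)^{|(a,b)||(a',b')|}$, etc., through the Jacobiator so that the pairs cancel correctly. A helpful simplification is to use from the outset that $|a|=|b|$ on each homogeneous summand, which frequently reduces compound signs like $(-1)^{|a||b'|+|b||a'|}$ to $(-1)^{2|a||a'|}=1$; with this reduction the three identities \eqref{action1}, \eqref{action2}, and the graded Jacobi identities for $\mathcal{A}$ and $\mathcal{B}$ match the three groups of terms verbatim.
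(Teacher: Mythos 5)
Your proof is correct; the paper states Proposition \ref{semidirect} without proof (as the graded analogue of the classical semidirect product construction), and your direct verification --- reducing graded skew-symmetry and the Jacobiator to the identities (\ref{action1}), (\ref{action2}) and the graded Jacobi identities of $\mathcal{A}$ and $\mathcal{B}$, under the convention that $|a|=|b|$ on each homogeneous summand --- is exactly the intended argument. Your grading convention is moreover the one the paper actually uses later, in Theorem \ref{semi-pro-thm}, where homogeneous elements are pairs $(P,E)\in C^{m+1}_{\mathrm{Hom}}(\mathfrak{g},\mathfrak{g})\oplus C^{m}_{\mathrm{Hom}}(\mathfrak{g},\mathfrak{h})$ and the sign $(-1)^{|b||a'|}$ becomes $(-1)^{mn}$.
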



The graded Lie algebra constructed in the above proposition is called the {\em semidirect product}, and it is denoted by $(\mathcal{A}, [~,~]_\mathcal{A}) \ltimes (\mathcal{B}, [~,~]_\mathcal{B})$ or simply by $\mathcal{A} \ltimes \mathcal{B}$ when the brackets are clear from the context.

\medskip

Let $(\mathfrak{g}, [~,~]_\mathfrak{g}, \alpha)$ and $(\mathfrak{h}, [~,~]_\mathfrak{h}, \beta)$ be two Hom-Lie algebras. Consider the graded Lie algebras $(C^{\bullet +1}_{\mathrm{Hom}} (\mathfrak{g}, \mathfrak{g}), [~,~]_{\mathsf{NR}})$ and $(C^{\bullet}_{\mathrm{Hom}} (\mathfrak{g}, \mathfrak{h}), [~,~]_{\mathsf{C}})$ obtained in Sections \ref{sec2} and \ref{sec3}, respectively. We define a degree $0$ bilinear map
\begin{align}\label{first-semi}
\rho : C^{\bullet +1}_{\mathrm{Hom}} (\mathfrak{g}, \mathfrak{g}) \times C^{\bullet}_{\mathrm{Hom}} (\mathfrak{g}, \mathfrak{h}) \rightarrow C^{\bullet}_{\mathrm{Hom}} (\mathfrak{g}, \mathfrak{h}), ~(P, E) \mapsto \rho(P) (E) := i_P E,
\end{align}
for $P \in C^{m +1}_{\mathrm{Hom}} (\mathfrak{g}, \mathfrak{g})$ and $ E \in C^{k}_{\mathrm{Hom}} (\mathfrak{g}, \mathfrak{h})$. Then we have the following.

\begin{proposition}\label{first-semi-rep}
The map (\ref{first-semi}) defines an action of the graded Lie algebra $(C^{\bullet +1}_{\mathrm{Hom}} (\mathfrak{g}, \mathfrak{g}), [~,~]_{\mathsf{NR}})$ on the graded Lie algebra $(C^{\bullet}_{\mathrm{Hom}} (\mathfrak{g}, \mathfrak{h}), [~,~]_{\mathsf{C}})$.
\end{proposition}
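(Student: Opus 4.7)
The plan is to verify the two axioms of a graded Lie algebra action, namely conditions \eqref{action1} and \eqref{action2}, for the map $\rho(P)(E) = i_P E$. Note the shift in grading: an element $P \in C^{m+1}_{\mathrm{Hom}}(\mathfrak{g}, \mathfrak{g})$ has degree $m$ in the graded Lie algebra $(C^{\bullet + 1}_{\mathrm{Hom}}(\mathfrak{g}, \mathfrak{g}), [~,~]_{\mathsf{NR}})$, whereas $E \in C^k_{\mathrm{Hom}}(\mathfrak{g}, \mathfrak{h})$ has degree $k$. With this convention, $i_P E \in C^{m+k}_{\mathrm{Hom}}(\mathfrak{g}, \mathfrak{h})$, so $\rho$ is indeed of degree zero (and it is routine to see $\beta \circ i_P E = i_P E \circ \alpha^{\wedge}$, so the image lies in the appropriate cochain space).

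For condition \eqref{action1}, by definition of the Nijenhuis-Richardson bracket we must establish the identity
\begin{align*}
i_{i_P P'} E - (-1)^{mm'} i_{i_{P'}P} E \;=\; i_P (i_{P'} E) - (-1)^{mm'} i_{P'}(i_P E),
\end{align*}
for $P \in C^{m+1}_{\mathrm{Hom}}(\mathfrak{g},\mathfrak{g})$, $P' \in C^{m'+1}_{\mathrm{Hom}}(\mathfrak{g},\mathfrak{g})$. This is the graded right pre-Lie identity for the operation $E \odot P := i_P E$, which is the same identity that underlies the proof that $[~,~]_{\mathsf{NR}}$ is a graded Lie bracket in \cite{amm-ej-makh}. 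First I will unfold the left- and right-hand sides via shuffle decompositions using \eqref{contraction-map}: on the right, each composition $i_P \circ i_{P'}$ produces terms where $P$ and $P'$ are inserted into disjoint argument sets of $E$ (``non-nested'' terms) together with terms where $P$ is inserted into the arguments already occupied by $P'$ (``nested'' terms). The non-nested contributions are graded symmetric in $(P, P')$, so they cancel after taking the prescribed alternating sum. The nested contributions reassemble precisely into $i_{i_P P' - (-1)^{mm'} i_{P'} P} E = i_{[P,P']_{\mathsf{NR}}} E$, which yields the desired identity. The $\alpha$-twists appearing in \eqref{contraction-map} pose no issue because the cochains commute with the corresponding powers of $\alpha$.

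For condition \eqref{action2}, we must show that $i_P$ is a graded derivation of degree $m$ on the cup product bracket $[~,~]_{\mathsf{C}}$, i.e.
\begin{align*}
i_P [E, E']_{\mathsf{C}} \;=\; [i_P E, E']_{\mathsf{C}} + (-1)^{mk} [E, i_P E']_{\mathsf{C}},
\end{align*}
for $E \in C^k_{\mathrm{Hom}}(\mathfrak{g}, \mathfrak{h})$ and $E' \in C^{k'}_{\mathrm{Hom}}(\mathfrak{g}, \mathfrak{h})$. Expanding the left-hand side from \eqref{cup-form} and \eqref{contraction-map}, one chooses $m+1$ inputs to feed into $P$, then splits the remaining $k + k' - 1$ inputs into a $(k-1, k')$ or a $(k, k'-1)$ shuffle according to whether the block carrying $P$ ends up on the $E$-side or on the $E'$-side of the cup product. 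Each such combined shuffle corresponds bijectively either to a term in $[i_P E, E']_{\mathsf{C}}$ (first case) or to a term in $[E, i_P E']_{\mathsf{C}}$ (second case). The Koszul sign $(-1)^{mk}$ arises from transposing the $P$-block across the $E$-block in the second case. The main obstacle is therefore purely combinatorial: verifying that the shuffle bijection respects signs and that the powers of $\beta$ appearing in the cup product formula match those required after contraction. This last compatibility follows from the intertwining relation $\beta \circ E = E \circ \alpha^{\wedge k}$ (and similarly for $E'$), which lets one slide $\beta$-powers through the inserted $P$-argument without extra correction. Modulo this careful bookkeeping, the proof is a direct but tedious check.
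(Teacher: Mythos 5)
Your proposal is correct and follows essentially the same route as the paper: condition \eqref{action1} is reduced to the graded right pre-Lie identity for the contraction $i$, and condition \eqref{action2} is proved by decomposing the shuffles according to whether the block of inputs fed into $P$ lands on the $E$-side or the $E'$-side of the cup product (the paper implements this via the disjoint-union decomposition of $Sh(m,n)$ into $Sh(m-1,n)$ and $Sh(m,n-1)$). The only difference is one of detail: the paper carries out the full sign and $\beta$-power bookkeeping that you defer, but the underlying bijections and Koszul signs you describe are exactly the ones used there.
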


\begin{proof}
Let $P \in C^{m+1}_\mathrm{Hom}(\mathfrak{g}, \mathfrak{g})$, $Q \in C^{n+1}_\mathrm{Hom} (\mathfrak{g}, \mathfrak{g})$, $E \in C^k_\mathrm{Hom} (\mathfrak{g}, \mathfrak{h})$ and $F \in C^l_\mathrm{Hom} (\mathfrak{g}, \mathfrak{h})$. Then we have
\begin{align*}
    \rho ( [P, Q]_\mathsf{NR}) (E) &= i_{( [P, Q]_\mathsf{NR})} E\\
    &= i_{i_P Q} E - (-1)^{mn} ~i_{i_Q P}E\\
    &= i_P i_Q E- (-1)^{mn} i_Q i_P E \qquad \text{(by the graded right pre-Lie identity of $P \odot Q= i_Q P$)}\\
    &= \rho(P) \rho(Q) (E) - (-1)^{mn} \rho(Q) \rho (P) (E) .
\end{align*}
Hence the condition (\ref{action1}) holds.
Next, for any $x_1 ,\ldots , x_{k+l+m} \in \mathfrak{g}$, we first observe that
\begin{align}
    &[ \rho (P) E, F]_\mathsf{C} (x_1,\ldots , x_{k+l+m}) \nonumber\\
    &~= [i_P E,F]_\mathsf{C} (x_1,\ldots , x_{k+l+m}) \nonumber \\
    &~= \sum_{\sigma \in Sh (m+k, l)} (-1)^\sigma ~\big[ \beta^{l-1}(i_P E)(x_{\sigma (1)}, \ldots, x_{\sigma (m+k)}), \beta^{m+k-1} F(x_{\sigma (m+k+1)}, \ldots, x_{\sigma (m+k+l)})   \big]_{\mathfrak{h}} \nonumber \\
    &~= \sum_{\sigma \in Sh (m+k, l)} (-1)^\sigma \sum_{\tau \in Sh (m+1, k-1)} (-1)^\tau~ \big[ \beta^{l-1}E\big( P(x_{\tau \sigma (1)}, \ldots, x_{\tau \sigma (m+1)}),\alpha^m x_{\tau \sigma (m+2)},\ldots , \alpha^m x_{\tau \sigma (m+k)}  \big), \nonumber \\
    &~ \qquad \qquad \qquad \qquad \qquad \qquad \qquad \qquad   \beta^{m+k-1}F(x_{\sigma (m+k+1)}, \ldots, x_{\sigma (m+k+l)}  )  \big]_{\mathfrak{h}} \nonumber \\
    &~= \sum_{\gamma \in Sh (m+1,k-1, l)} (-1)^\gamma  ~\big[ \beta^{l-1}E\big( P(x_{ \gamma  (1)}, \ldots, x_{ \gamma  (m+1)}),\alpha^m x_{ \gamma  (m+2)},\ldots , \alpha^m x_{\gamma  (m+k)}  \big), \nonumber\\
    &~ \qquad \qquad \qquad \qquad \qquad \qquad \qquad \qquad   \beta^{m+k-1}F(x_{\gamma  (m+k+1)}, \ldots, x_{\gamma  (m+k+l)}  )  \big]_{\mathfrak{h}}. \label{action-1}
\end{align}
Using similar calculations,  we get
\begin{align*}
    &[ E,\rho (P)  F]_\mathsf{C}(x_1,\ldots , x_{k+l+m})\\
    &~= [E, i_P F]_\mathsf{C} (x_1,\ldots , x_{k+l+m})\\
    &~= \sum_{\sigma \in Sh (k, l+m)} (-1)^\sigma ~\big[ \beta^{l+m-1}E(x_{\sigma (1)}, \ldots , x_{\sigma (k)}), \beta^{k-1}(i_P F)(x_{\sigma (k+1)}, \ldots , x_{\sigma (k+l+m)})  \big]_{\mathfrak{h}}\\
    &~= \sum_{\sigma \in Sh (k, l+m)} (-1)^\sigma \sum_{\tau \in Sh (m+1, l-1)} (-1)^\tau~ \big[ \beta^{l+m-1}E(x_{\sigma (1)}, \ldots , x_{\sigma (k)}), \beta^{k-1} F \big(P(x_{\tau \sigma (k+1)},\ldots , x_{\tau \sigma (k+m+1)}),\\
    &~  \qquad \qquad  \qquad \qquad  \qquad \qquad \qquad \qquad  \alpha^m x_{\tau \sigma (k+m+2)}, \ldots ,\alpha^m x_{\tau \sigma (k+m+l)}  \big) \big]_{\mathfrak{h}}\\
    &~= \sum_{\gamma  \in Sh (k, m+1, l-1)} (-1)^\gamma ~ \big[ \beta^{l+m-1}E(x_{\gamma  (1)}, \ldots , x_{\gamma  (k)}), \beta^{k-1} F \big(P(x_{\gamma  (k+1)},\ldots , x_{ \gamma  (k+m+1)}),\\
    &~  \qquad \qquad   \qquad \qquad  \qquad \qquad \qquad \qquad  \alpha^m x_{ \gamma  (k+m+2)}, \ldots ,\alpha^m x_{\gamma (k+m+l)}  \big) \big]_{\mathfrak{h}}.
\end{align*}
    Changing the order of the shuffle in the above equation, we get 
        \begin{align}
            &(-1)^{(m+1)k}[ E,\rho (P)  F]_\mathsf{C}(x_1,\ldots , x_{k+l+m}) \nonumber\\
            &~= \sum_{\gamma \in Sh (m+1,k, l-1)} (-1)^\gamma~ \big[ \beta^{l+m-1}E(x_{\gamma (m+2)}, \ldots , x_{\gamma (m+k+1)}), \beta^{k-1} F \big(P(x_{\gamma (1)},\ldots , x_{ \gamma (m+1)}), \nonumber\\
            &~  \qquad \qquad   \qquad \qquad \qquad \qquad \qquad \qquad  \alpha^m x_{ \gamma (k+m+2)}, \ldots ,\alpha^m x_{\gamma (k+m+l)}  \big) \big]_{\mathfrak{h}}. \label{action-2}
        \end{align}
    Before proceeding further, note that for any $m,n \geq 1$, $$Sh(m,n)= \{\sigma \in Sh(m,n) |~ \! \sigma(1)=1 \} \bigsqcup~ \{\sigma \in Sh(m,n) |~ \! \sigma(m+1)=1 \} .$$
    Given any permutation $\sigma \in S_{m+n}$ with $\sigma(1)=1$, we can define a new permutation $\tau \in S_{m+n-1}$ by
        \begin{align*}
            \tau(i) = \sigma(i+1)-1 \quad \text{for   } 1\leq i \leq m+n-1.
        \end{align*}
    It is easy to see that $\sigma \in Sh(m,n)$ if and only if $\tau \in Sh(m-1, n)$. Also note that $(-1)^{\sigma}= (-1)^\tau$. On the other hand, if $\sigma \in S_{m+n}$ with $\sigma(m+1)=1$, we can define a permutation $\gamma \in S_{m+n-1}$ by 
        \begin{align*}
            \gamma (i) = \begin{cases}
        \sigma (i)-1  & \text{ for } 1 \leq i \leq m,\\
        \sigma (i+1)-1  & \text{ for } m+1 \leq i \leq m+n-1.
        \end{cases}
        \end{align*}
    Then we observe that $\sigma \in Sh(m,n)$ if and only if $\gamma \in Sh(m,n-1)$ and moreover $(-1)^\sigma =(-1)^m (-1)^\gamma $. Thus, the set $Sh(m,n)$ can be written as a disjoint union of $Sh(m-1,n)$ and $Sh(m,n-1)$. Using this observation, we have
\begin{align*}
    &\rho (P) [E, F]_\mathsf{C} (x_1,\ldots , x_{k+l+m})\\
    &~= (i_P [E, F]_\mathsf{C}) (x_1,\ldots , x_{k+l+m})\\
    &~= \sum_{\sigma \in Sh (m+1,k+ l-1)} (-1)^\sigma~ [E,F]_\mathsf{C}\big(P( x_{\sigma (1)},\ldots , x_{ \sigma (m+1)} ),\alpha^m x_{\sigma (m+2)}, \ldots ,\alpha^m x_{\sigma (m+k+l)}   \big)\\
    &~= \sum_{\sigma \in Sh (m+1,k+ l-1)} (-1)^\sigma~ \Big( \sum_{\tau \in Sh (k-1, l)} (-1)^\tau~ \big[ \beta^{l-1} E\big(P(x_{\sigma (1)},\ldots , x_{ \sigma (m+1)}),\alpha^m x_{\tau \sigma (m+2)}, \ldots , \alpha^m x_{\tau \sigma (m+k)}\big),\\
    &~ \qquad \qquad \qquad \qquad \qquad \qquad \qquad \qquad  \beta^{k-1} F(\alpha^{m}x_{\tau \sigma (m+k+1)},\ldots,\alpha^m x_{\tau \sigma (m+k+l)}  ) \big]_{\mathfrak{h}} \\
    & \qquad \qquad +(-1)^k \sum_{\gamma \in Sh (k, l-1)} (-1)^\gamma~  \big[\beta^{l-1} E(\alpha^mx_{\gamma \sigma (m+2)},\ldots,\alpha^m x_{\gamma \sigma (m+k+1)}  ),\\
    &~ \qquad \qquad \qquad \qquad \qquad \qquad \qquad \beta^{k-1} F\big(P(x_{\sigma (1)},\ldots , x_{ \sigma (m+1)}),\alpha^m x_{\gamma \sigma (m+k+2)},\ldots ,\alpha^m x_{\gamma \sigma (m+k+l)}  \big)  \big]_{\mathfrak{h}} \Big)
\end{align*}
    \begin{align*}
    &= \sum_{\sigma \in Sh (m+1,k+ l-1)} (-1)^{\sigma} \sum_{\tau \in Sh (k-1, l)} (-1)^\tau~ \big[ \beta^{l-1} E\big(P(x_{\sigma (1)},\ldots , x_{ \sigma (m+1)}),\alpha^m x_{\tau \sigma (m+2)}, \ldots , \alpha^m x_{\tau \sigma (m+k)}\big),\\
    & \qquad \qquad \qquad \qquad \qquad \qquad \qquad \qquad \qquad \qquad \beta^{k-1} F(\alpha^{m}x_{\tau \sigma (m+k+1)},\ldots,\alpha^m x_{\tau \sigma (m+k+l)}  ) \big]_{\mathfrak{h}} \\
    & \quad  + (-1)^k \sum_{\sigma \in Sh (m+1,k+ l-1)} (-1)^{\sigma} \sum_{\gamma \in Sh (k, l-1)} (-1)^\gamma ~ \big[\beta^{l-1} E(\alpha^m x_{\gamma \sigma (m+2)},
     \ldots,\alpha^m x_{\gamma \sigma (m+k+1)}  ),\\
     & \qquad \qquad \qquad \qquad \qquad \qquad \qquad \beta^{k-1} F\big(P(x_{\sigma (1)},\ldots , x_{ \sigma (m+1)}),\alpha^m x_{\gamma \sigma (m+k+2)},\ldots ,\alpha^m x_{\gamma \sigma (m+k+l)}  \big)  \big]_{\mathfrak{h}}   \\
    &= \sum_{\delta \in Sh (m+1,k-1, l)} (-1)^\delta ~\big[ \beta^{l-1}E\big( P(x_{ \delta(1)}, \ldots, x_{ \delta(m+1)}),\alpha^m x_{ \delta(m+2)},\ldots , \alpha^m x_{\delta (m+k)}  \big),\\
    & \qquad \qquad \qquad \qquad \qquad \qquad \qquad \qquad   \beta^{m+k-1}F(x_{\delta (m+k+1)}, \ldots, x_{\delta (m+k+l)}  )  \big]_{\mathfrak{h}} \\
    & \quad + (-1)^k
    \sum_{\eta \in Sh (m+1,k, l-1)} (-1)^\eta~ \big[ \beta^{l+m-1}E(x_{\eta (m+2)}, \ldots , x_{\eta (m+k+1)}),\\
    & \qquad  \qquad \qquad \qquad \qquad \qquad \qquad \quad \beta^{k-1} F \big(P(x_{\eta (1)},\ldots , x_{ \eta (m+1)}),
    \alpha^m x_{ \eta (k+m+2)}, \ldots ,\alpha^m x_{\eta (k+m+l}  \big) \big]_{\mathfrak{h}}\\
    &=\big( [ \rho (P) E, F]_\mathsf{C}+ (-1)^{mk}[ E,\rho (P)  F]_\mathsf{C} \big)(x_1,\ldots , x_{k+l+m}) \qquad \text{(using (\ref{action-1}) and (\ref{action-2}))}.
\end{align*}
Hence the condition (\ref{action2}) also holds. This proves the result.
\end{proof}

We obtain the following by applying Proposition \ref{semidirect} to the result of Proposition \ref{first-semi-rep}.

\begin{thm}\label{semi-pro-thm}
Let $(\mathfrak{g}, [~,~]_\mathfrak{g}, \alpha)$ and  $(\mathfrak{h}, [~,~]_\mathfrak{h}, \beta)$ be two Hom-Lie algebras. Then the direct sum graded vector space $C^{\bullet +1}_{\mathrm{Hom}} (\mathfrak{g}, \mathfrak{g})  \oplus C^{\bullet}_{\mathrm{Hom}} (\mathfrak{g}, \mathfrak{h})$ carries a graded Lie algebra structure with bracket given by
\begin{align*}
[(P, E), (Q, F)]_\ltimes := \big( [P,Q]_{\mathsf{NR}}~ \! , \! ~  [E,F]_{\mathsf{C}} + i_P F - (-1)^{mn} i_Q E   \big),
\end{align*}
for homogeneous elements $(P, E) \in C^{m +1}_{\mathrm{Hom}} (\mathfrak{g}, \mathfrak{g})  \oplus C^{m}_{\mathrm{Hom}} (\mathfrak{g}, \mathfrak{h})$ and $(Q, F) \in C^{n +1}_{\mathrm{Hom}} (\mathfrak{g}, \mathfrak{g})  \oplus C^{n}_{\mathrm{Hom}} (\mathfrak{g}, \mathfrak{h})$. Moreover, $(C^{\bullet +1 }_\mathrm{Hom} (\mathfrak{g}, \mathfrak{g}), [~,~]_\mathsf{NR})$ is a graded Lie subalgebra and $C^\bullet_\mathrm{Hom} (\mathfrak{g}, \mathfrak{h})$ is an ideal of the graded Lie algebra $( C^{\bullet +1}_{\mathrm{Hom}} (\mathfrak{g}, \mathfrak{g})  \oplus C^{\bullet}_{\mathrm{Hom}} (\mathfrak{g}, \mathfrak{h}), [~,~]_\ltimes).$
\end{thm}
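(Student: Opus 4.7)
The plan is to recognize Theorem \ref{semi-pro-thm} as an immediate corollary of the two preceding results: Proposition \ref{first-semi-rep} (which established that $\rho(P)(E) := i_P E$ defines a graded Lie algebra action of $(C^{\bullet+1}_{\mathrm{Hom}}(\mathfrak{g},\mathfrak{g}), [~,~]_{\mathsf{NR}})$ on $(C^{\bullet}_{\mathrm{Hom}}(\mathfrak{g},\mathfrak{h}), [~,~]_{\mathsf{C}})$) together with the general semidirect product construction from Proposition \ref{semidirect}. First I would verify that the grading conventions line up: an element $P \in C^{m+1}_{\mathrm{Hom}}(\mathfrak{g},\mathfrak{g})$ has degree $m$ in the shifted grading on the $\mathsf{NR}$-side, while $E \in C^{m}_{\mathrm{Hom}}(\mathfrak{g},\mathfrak{h})$ has degree $m$ in the cup product grading, so the action $\rho$ is indeed degree $0$, as required by Proposition \ref{semidirect}.

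Next I would substitute $\rho(P)(F) = i_P F$ and $\rho(Q)(E) = i_Q E$ into the general semidirect product bracket
\begin{align*}
[(a, b), (a', b')]_\ltimes = \big( [a, a']_\mathcal{A},~ [b, b']_\mathcal{B} + \rho(a)(b') - (-1)^{|b||a'|} \rho(a')(b) \big).
\end{align*}
With $|b| = |E| = m$ and $|a'| = |Q| = n$ in the respective (shifted) gradings, the sign $(-1)^{|b||a'|}$ becomes $(-1)^{mn}$, producing exactly the formula claimed in the theorem. This step is purely formal and requires no further computation beyond what Propositions \ref{first-semi-rep} and \ref{semidirect} already provide.

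Finally I would establish the subalgebra and ideal claims by reading them off the bracket formula. The computation $[(P, 0), (Q, 0)]_\ltimes = ([P, Q]_{\mathsf{NR}}, 0)$ shows that $P \mapsto (P, 0)$ embeds $(C^{\bullet+1}_{\mathrm{Hom}}(\mathfrak{g},\mathfrak{g}), [~,~]_{\mathsf{NR}})$ as a graded Lie subalgebra. For the ideal property, the two computations
\begin{align*}
[(P, E), (0, F)]_\ltimes = (0,~ [E, F]_{\mathsf{C}} + i_P F) \quad\text{and}\quad [(0, E), (Q, F)]_\ltimes = (0,~ [E, F]_{\mathsf{C}} - (-1)^{mn} i_Q E)
\end{align*}
both land in $\{0\} \oplus C^{\bullet}_{\mathrm{Hom}}(\mathfrak{g}, \mathfrak{h})$, which is precisely what is needed.

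The main obstacle, such as it is, was already dispatched in Proposition \ref{first-semi-rep}, whose proof contained the genuine combinatorial work (verifying that $\rho$ is a morphism into derivations via the shuffle bijections). What remains here is bookkeeping; the only real subtlety is carefully tracking the sign $(-1)^{mn}$ by distinguishing the shifted grading used on the Nijenhuis-Richardson side from the unshifted grading on the cup product side.
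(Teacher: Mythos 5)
Your proposal matches the paper's route exactly: the paper derives Theorem \ref{semi-pro-thm} precisely by applying the general semidirect product construction of Proposition \ref{semidirect} to the action established in Proposition \ref{first-semi-rep}, with the subalgebra and ideal claims read off directly from the bracket formula. Your extra care with the shifted versus unshifted gradings and the explicit verification of the ideal property are correct and only make the bookkeeping more transparent.
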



In the rest of this section, we will work with a single Hom-Lie algebra and therefore we will not use any subscript on its Hom-Lie bracket for our convenience. Let  $(\mathfrak{g}, [~,~], \alpha)$ be a Hom-Lie algebra. Consider the semidirect product graded Lie algebra $( C^{\bullet +1 }_\mathrm{Hom} (\mathfrak{g}, \mathfrak{g}) \oplus C^\bullet_\mathrm{Hom} (\mathfrak{g}, \mathfrak{g}) , [~,~]_\ltimes )$ given in Theorem \ref{semi-pro-thm}. Let
\begin{align*}
\mathrm{Gr} ( (-1)^k \delta_{\mathrm{Hom}} ) = \big\{ ( (-1)^k \delta_{\mathrm{Hom}} f, f ) ~ |~ f \in C^k_\mathrm{Hom} (\mathfrak{g}, \mathfrak{g} ) \big\} \subset C^{k +1 }_\mathrm{Hom} (\mathfrak{g}, \mathfrak{g}) \ltimes C^k_\mathrm{Hom} (\mathfrak{g}, \mathfrak{g})
\end{align*}
be the graph of the map $(-1)^k \delta_{\mathrm{Hom}}  : C^k_\mathrm{Hom} (\mathfrak{g}, \mathfrak{g}) \rightarrow C^{k+1}_\mathrm{Hom} (\mathfrak{g}, \mathfrak{g}) $. Define
\begin{align*}
\mathrm{Gr} ((-1)^\bullet \delta_\mathrm{Hom} ) := \bigoplus_k  \mathrm{Gr} ( (-1)^k \delta_{\mathrm{Hom}} ) \subset  C^{\bullet +1 }_\mathrm{Hom} (\mathfrak{g}, \mathfrak{g}) \oplus  C^\bullet_\mathrm{Hom} (\mathfrak{g}, \mathfrak{g}).
\end{align*}

Then we have the following.
\begin{proposition}\label{minus-morphism}
With the above notations, $\mathrm{Gr} ((-1)^\bullet \delta_\mathrm{Hom} ) $ is a graded Lie subalgebra of the semidirect product algebra $ \big( C^{\bullet +1 }_\mathrm{Hom} (\mathfrak{g}, \mathfrak{g}) \oplus C^\bullet_\mathrm{Hom} (\mathfrak{g}, \mathfrak{g}), [~,~]_\ltimes \big)$.
\end{proposition}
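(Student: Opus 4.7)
The plan is to show directly that for any $f \in C^m_\mathrm{Hom}(\mathfrak{g},\mathfrak{g})$ and $g \in C^n_\mathrm{Hom}(\mathfrak{g},\mathfrak{g})$, the bracket
\begin{align*}
\big[\big((-1)^m \delta_\mathrm{Hom} f, f\big), \big((-1)^n \delta_\mathrm{Hom} g, g\big)\big]_\ltimes
\end{align*}
again has the form $\big((-1)^{m+n} \delta_\mathrm{Hom} X, X\big)$ with $X = [f,g]_\mathsf{C} + (-1)^m i_{\delta_\mathrm{Hom} f}g - (-1)^{mn+n} i_{\delta_\mathrm{Hom} g}f$, which is forced by the second component of $[~,~]_\ltimes$ from Theorem \ref{semi-pro-thm}. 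Since the first component works out to $(-1)^{m+n}[\delta_\mathrm{Hom} f, \delta_\mathrm{Hom} g]_\mathsf{NR}$, the whole claim reduces to verifying the single identity
\begin{align*}
[\delta_\mathrm{Hom} f, \delta_\mathrm{Hom} g]_\mathsf{NR} \;=\; \delta_\mathrm{Hom}\!\left( [f,g]_\mathsf{C} + (-1)^m i_{\delta_\mathrm{Hom} f} g - (-1)^{mn+n}\, i_{\delta_\mathrm{Hom} g} f \right).
\end{align*}

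The key inputs are Proposition \ref{cup-new-new}, Proposition \ref{delta-hom-derivation}, the graded derivation property of $\delta_\mathrm{Hom}$ on $[~,~]_\mathsf{NR}$, and $\delta_\mathrm{Hom}^2 = 0$. First I would apply $\delta_\mathrm{Hom}$ to the formula of Proposition \ref{cup-new-new}; the term $(-1)^m \delta_\mathrm{Hom}^2(i_f g)$ vanishes, so $\delta_\mathrm{Hom}[f,g]_\mathsf{C} = \delta_\mathrm{Hom}(i_f \delta_\mathrm{Hom} g) + (-1)^{m-1}\delta_\mathrm{Hom}(i_{\delta_\mathrm{Hom} f} g)$. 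Substituting this into the right-hand side of the target identity, the sign $(-1)^{m-1} + (-1)^m = 0$ causes the $i_{\delta_\mathrm{Hom} f} g$ contributions to cancel, leaving $\delta_\mathrm{Hom}(i_f \delta_\mathrm{Hom} g) - (-1)^{mn+n} \delta_\mathrm{Hom}(i_{\delta_\mathrm{Hom} g} f)$.

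Next I would rewrite $i_f \delta_\mathrm{Hom} g$ via the definition of the NR bracket as $i_f \delta_\mathrm{Hom} g = [f, \delta_\mathrm{Hom} g]_\mathsf{NR} + (-1)^{(m-1)n} i_{\delta_\mathrm{Hom} g} f$. Applying $\delta_\mathrm{Hom}$ and using its derivation property on $[~,~]_\mathsf{NR}$ together with $\delta_\mathrm{Hom}^2 g = 0$ gives $\delta_\mathrm{Hom}[f, \delta_\mathrm{Hom} g]_\mathsf{NR} = [\delta_\mathrm{Hom} f, \delta_\mathrm{Hom} g]_\mathsf{NR}$. The remaining $\delta_\mathrm{Hom}(i_{\delta_\mathrm{Hom} g} f)$ terms then combine with coefficient $(-1)^{mn-n} - (-1)^{mn+n} = 0$, completing the identity.

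The main obstacle is bookkeeping the signs consistently across the two different gradings at play, namely the shifted grading on $C^{\bullet+1}_\mathrm{Hom}(\mathfrak{g},\mathfrak{g})$ carrying the Nijenhuis-Richardson bracket versus the unshifted grading on $C^\bullet_\mathrm{Hom}(\mathfrak{g},\mathfrak{g})$ carrying the cup product bracket; in particular the derivation identity for $\delta_\mathrm{Hom}$ on $[~,~]_\mathsf{NR}$ uses the shifted degree $|f| = m-1$ of $f$, which must not be confused with the degree appearing in the semidirect product formula.
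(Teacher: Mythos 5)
Your proposal is correct and follows essentially the same route as the paper: reduce the claim to the identity $\delta_{\mathrm{Hom}}\big([P,Q]_\mathsf{C}+(-1)^m i_{(\delta_{\mathrm{Hom}}P)}Q-(-1)^{(m+1)n}i_{(\delta_{\mathrm{Hom}}Q)}P\big)=[\delta_{\mathrm{Hom}}P,\delta_{\mathrm{Hom}}Q]_\mathsf{NR}$, then use Proposition \ref{cup-new-new} together with $\delta_{\mathrm{Hom}}^2=0$, the cancellation of the $i_{(\delta_{\mathrm{Hom}}P)}Q$ terms, and the derivation property of $\delta_{\mathrm{Hom}}$ for $[~,~]_\mathsf{NR}$. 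The only cosmetic difference is that the paper recognizes $i_P(\delta_{\mathrm{Hom}}Q)-(-1)^{(m+1)n}i_{(\delta_{\mathrm{Hom}}Q)}P$ as $[P,\delta_{\mathrm{Hom}}Q]_\mathsf{NR}$ in one step, whereas you substitute the Nijenhuis--Richardson expression for $i_P(\delta_{\mathrm{Hom}}Q)$ and let the leftover terms cancel, which is the same computation.
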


\begin{proof}
For $P \in C^m_\mathrm{Hom} (\mathfrak{g}, \mathfrak{g})$ and $Q \in C^n_\mathrm{Hom} (\mathfrak{g}, \mathfrak{g})$, we have
\begin{align*}
&[( (-1)^m \delta_\mathrm{Hom} P, P) , ( (-1)^n \delta_{\mathrm{Hom}} Q, Q) ]_\ltimes \\
&= \big(      (-1)^{m+n} [ \delta_\mathrm{Hom} P, \delta_\mathrm{Hom} Q ]_{\mathsf{NR}} , ~ [P, Q]_\mathsf{C} + (-1)^m ~ i_{(\delta_\mathrm{Hom} P)}Q  - (-1)^{(m+1) n} ~ i_{(\delta_\mathrm{Hom} Q)} P  \big). 
\end{align*}
This is in $\mathrm{Gr} ((-1)^\bullet \delta_\mathrm{Hom})$ if and only if
\begin{align}\label{delta-hom-morphism}
\delta_{\mathrm{Hom}}  \big(  [P, Q]_\mathsf{C} + (-1)^m ~ i_{(\delta_\mathrm{Hom} P)}Q  - (-1)^{(m+1) n} ~ i_{(\delta_\mathrm{Hom} Q)} P   \big) = [\delta_\mathrm{Hom} P, \delta_\mathrm{Hom} Q ]_\mathsf{NR}.
\end{align}
We will now prove this identity. Observe that
\begin{align*}
&\delta_{\mathrm{Hom}}  \big(  [P, Q]_\mathsf{C} + (-1)^m ~ i_{(\delta_\mathrm{Hom} P)}Q  - (-1)^{(m+1) n} ~ i_{(\delta_\mathrm{Hom} Q)} P  \big) \\
&= \delta_\mathrm{Hom}    (  i_P (\delta_\mathrm{Hom} Q)  ) + (-1)^{m-1} \cancel{\delta_\mathrm{Hom} ( i_{(\delta_\mathrm{Hom} P)} Q )} + (-1)^m \cancel{\delta_\mathrm{Hom} ( i_{(\delta_\mathrm{Hom} P)} Q ) } - (-1)^{(m+1)n } \delta_\mathrm{Hom} ( i_{(\delta_\mathrm{Hom} Q)} P  ) \\
 & \qquad \qquad \qquad \qquad \qquad \qquad \qquad \qquad \qquad \qquad  (\text{by Proposition } \ref{cup-new-new}) \\
 &= \delta_\mathrm{Hom}  \big( i_P (\delta_\mathrm{Hom} Q)     - (-1)^{(m+1)n } i_{(\delta_\mathrm{Hom} Q)} P   \big) \\
 &= \delta_\mathrm{Hom} [ P, \delta_\mathrm{Hom} Q ]_\mathsf{NR}\\
 &= [\delta_\mathrm{Hom} P, \delta_\mathrm{Hom} Q]_\mathsf{NR}.
\end{align*}
Hence we have proved the result.
\end{proof}

The above proposition suggests us to consider the following definition.

\begin{definition}
Let $(\mathfrak{g}, [~,~], \alpha)$ be a Hom-Lie algebra. For each $m, n \geq 1$, we define a bracket $$[~,~]_{\mathsf{FN}} : C^m_{\mathrm{Hom}} (\mathfrak{g}, \mathfrak{g}) \times C^n_{\mathrm{Hom}} (\mathfrak{g}, \mathfrak{g}) \rightarrow C^{m+n}_{\mathrm{Hom}} (\mathfrak{g}, \mathfrak{g})$$ by
\begin{align}\label{fnn}
[P,Q]_{\mathsf{FN}} := [P, Q]_\mathsf{C} + (-1)^m ~ i_{(\delta_\mathrm{Hom} P)}Q  - (-1)^{(m+1) n} ~ i_{(\delta_\mathrm{Hom} Q)} P ,
\end{align}
for $P \in  C^m_{\mathrm{Hom}} (\mathfrak{g}, \mathfrak{g})$ and $Q \in C^n_{\mathrm{Hom}} (\mathfrak{g}, \mathfrak{g})$. The bracket $[~,~]_{\mathsf{FN}}$ is called the \textbf{Fr\"olicher-Nijenhuis bracket}.
\end{definition}

By using Proposition \ref{cup-new-new}, the Fr\"olicher-Nijenhuis bracket $[~,~]_\mathsf{FN}$ can be expressed as
\begin{align}
[P,Q]_\mathsf{FN} =~& i_P (\delta_\mathrm{Hom} Q ) + (-1)^{m-1} \cancel{ i_{(\delta_\mathrm{Hom} P)} Q} + (-1)^m \delta_\mathrm{Hom} ( i_P Q) \nonumber \\ 
& \qquad \qquad  + (-1)^m ~ \cancel{i_{(\delta_\mathrm{Hom} P)} Q} - (-1)^{(m+1)n}  i_{(\delta_\mathrm{Hom} Q)} P \nonumber\\
=~& [ P, \delta_\mathrm{Hom} Q ]_\mathsf{NR}  + (-1)^m \delta_\mathrm{Hom} ( i_P Q ).
\end{align}
Moreover, using the explicit expression of the cup product bracket $[~,~]_\mathsf{C}$ and the contraction operator $i$ in the defining identity (\ref{fnn}), we get that
\begin{align}\label{explicit-fn}
&[P,Q]_\mathsf{FN} (x_1, \ldots, x_{m+n}) \\
&= \sum_{\sigma \in Sh (m,n)} (-1)^\sigma ~[\alpha^{n-1} P (x_{\sigma (1)}, \ldots, x_{\sigma (m)} ), \alpha^{m-1} Q ( x_{\sigma (m+1)}, \ldots, x_{\sigma (m+n)} ) ]_\mathfrak{g} \nonumber \\
&+ (-1)^m \sum_{\sigma \in Sh (m+1, n-1)} (-1)^\sigma ~ Q \big( (\delta_\mathrm{Hom} P) ( x_{\sigma (1)}, \ldots, x_{\sigma (m +1)} ), \alpha^m x_{\sigma (m+2)}, \ldots, \alpha^m x_{\sigma (m+n)} \big) \nonumber \\
&- (-1)^{(m+1)n }   \sum_{\sigma \in Sh (n+1, m-1)} (-1)^\sigma ~ P \big( (\delta_\mathrm{Hom} Q) ( x_{\sigma (1)}, \ldots, x_{\sigma (n +1)} ), \alpha^n x_{\sigma (n+2)}, \ldots, \alpha^n x_{\sigma (m+n)} \big), \nonumber 
\end{align}
for $P \in C^m_\mathrm{Hom} (\mathfrak{g}, \mathfrak{g})$, $Q \in C^n_\mathrm{Hom} (\mathfrak{g}, \mathfrak{g})$ and $x_1, \ldots, x_{m+n} \in \mathfrak{g}$. Note that an implicit form of the Fr\"{o}licher-Nijenhuis bracket was defined in \cite{das-sen}. Here our bracket is explicit and it is also useful to connect with the Nijenhuis-Richardson graded Lie algebra by the following result.

\begin{thm}
Let $(\mathfrak{g}, [~,~], \alpha)$ be a Hom-Lie algebra. 

(i) Then the Fr\"olicher-Nijenhuis bracket makes $(C^\bullet_{\mathrm{Hom}} (\mathfrak{g}, \mathfrak{g}) , [~,~]_{\mathsf{FN}} )$ into a graded Lie algebra, called the Fr\"olicher-Nijenhuis algebra. 

(ii) Moreover, the Hom-Lie algebra differential $\delta_{\mathrm{Hom}} : C^\bullet_{\mathrm{Hom}} (\mathfrak{g}, \mathfrak{g}) \rightarrow C^{\bullet +1 }_{\mathrm{Hom}} (\mathfrak{g}, \mathfrak{g})$ is a graded Lie algebra morphism from the Fr\"olicher-Nijenhuis algebra $(C^\bullet_{\mathrm{Hom}} (\mathfrak{g}, \mathfrak{g}) , [~,~]_{\mathsf{FN}} )$ to the Nijenhuis-Richardson algebra $(C^{\bullet +1}_{\mathrm{Hom}} (\mathfrak{g}, \mathfrak{g}) , [~,~]_{\mathrm{NR}})$.
\end{thm}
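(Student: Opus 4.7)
The plan is to deduce both parts simultaneously by transferring the graded Lie bracket on the subalgebra $\mathrm{Gr}((-1)^\bullet\delta_{\mathrm{Hom}})$ from Proposition \ref{minus-morphism} back to $C^\bullet_{\mathrm{Hom}}(\mathfrak{g},\mathfrak{g})$ via the obvious graph isomorphism. All the essential computational work has already been done in that proposition, so what remains is mostly a matter of recognizing the structure.

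First I would introduce the degree-preserving linear bijection
\begin{align*}
\Phi : C^\bullet_{\mathrm{Hom}}(\mathfrak{g},\mathfrak{g}) \longrightarrow \mathrm{Gr}((-1)^\bullet\delta_{\mathrm{Hom}}), \quad P \longmapsto \big((-1)^m\delta_{\mathrm{Hom}}P,\, P\big) \text{ for } P \in C^m_{\mathrm{Hom}}(\mathfrak{g},\mathfrak{g}),
\end{align*}
whose inverse is projection onto the second coordinate. Computing $[\Phi(P),\Phi(Q)]_{\ltimes}$ with the bracket of Theorem \ref{semi-pro-thm} and using the linearity of $i_{(-)} (-)$ in its first argument, the second component simplifies to
\begin{align*}
[P,Q]_{\mathsf{C}} + (-1)^m\, i_{\delta_{\mathrm{Hom}}P}Q - (-1)^{(m+1)n}\, i_{\delta_{\mathrm{Hom}}Q}P \;=\; [P,Q]_{\mathsf{FN}},
\end{align*}
which matches the defining formula (\ref{fnn}), while identity (\ref{delta-hom-morphism}) established inside the proof of Proposition \ref{minus-morphism} says exactly that the first component equals $(-1)^{m+n}\delta_{\mathrm{Hom}}[P,Q]_{\mathsf{FN}}$. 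Consequently $\Phi([P,Q]_{\mathsf{FN}}) = [\Phi(P),\Phi(Q)]_{\ltimes}$, so $\Phi$ intertwines $[~,~]_{\mathsf{FN}}$ with the restriction of $[~,~]_{\ltimes}$ to $\mathrm{Gr}((-1)^\bullet\delta_{\mathrm{Hom}})$.

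Part (i) then follows at once: graded skew-symmetry and the graded Jacobi identity of $[~,~]_{\mathsf{FN}}$ are pulled back through $\Phi$ from the corresponding properties of $[~,~]_{\ltimes}$ on the graded Lie subalgebra $\mathrm{Gr}((-1)^\bullet\delta_{\mathrm{Hom}})$ provided by Proposition \ref{minus-morphism}. For part (ii), comparing first components of the intertwining identity $\Phi([P,Q]_{\mathsf{FN}}) = [\Phi(P),\Phi(Q)]_{\ltimes}$ yields $\delta_{\mathrm{Hom}}[P,Q]_{\mathsf{FN}} = [\delta_{\mathrm{Hom}}P,\delta_{\mathrm{Hom}}Q]_{\mathsf{NR}}$ after cancelling the common sign $(-1)^{m+n}$, which is precisely the morphism property.

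The only delicate point will be bookkeeping the signs produced by the $(-1)^m$ factors in the definition of $\Phi$ together with the $(-1)^{mn}$ prefactor in the semidirect bracket, to confirm that they combine into the exact sign pattern of (\ref{fnn}). There is no genuine obstacle beyond this: the nontrivial closure identity (\ref{delta-hom-morphism}) — which would otherwise need to be established independently in a direct proof of either the graded Jacobi identity for $[~,~]_{\mathsf{FN}}$ or of (ii) — is already available as a byproduct of Proposition \ref{minus-morphism}, and this is what makes the graph-isomorphism approach drastically shorter than a frontal attack via the explicit formula (\ref{explicit-fn}).
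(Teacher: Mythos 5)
Your proposal is correct and follows essentially the same route as the paper: the paper's proof also defines the injective map $\Theta = ((-1)^\bullet \delta_{\mathrm{Hom}}, \mathrm{id})$ onto $\mathrm{Gr}((-1)^\bullet \delta_{\mathrm{Hom}})$, invokes Proposition \ref{minus-morphism} to conclude that this map preserves brackets (giving part (i)), and derives part (ii) from the identity (\ref{delta-hom-morphism}) established there. Your sign bookkeeping for the second component of $[\Phi(P),\Phi(Q)]_\ltimes$ matches the defining formula (\ref{fnn}) exactly as in the paper's computation.
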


\begin{proof}
(i) Consider the injective map 
$\Theta := ((-1)^\bullet \delta_\mathrm{Hom}, \mathrm{id} ) : C^\bullet_\mathrm{Hom} (\mathfrak{g}, \mathfrak{g}) \rightarrow   C^{\bullet +1}_\mathrm{Hom} (\mathfrak{g}, \mathfrak{g}) \oplus C^\bullet_\mathrm{Hom} (\mathfrak{g}, \mathfrak{g})$
of graded vector spaces. It follows from Proposition \ref{minus-morphism} that 
\begin{align*}
\Theta :  ( C^\bullet_\mathrm{Hom} (\mathfrak{g}, \mathfrak{g}) , [~,~]_\mathsf{FN}) \rightarrow (  C^{\bullet +1}_\mathrm{Hom} (\mathfrak{g}, \mathfrak{g}) \oplus C^\bullet_\mathrm{Hom} (\mathfrak{g}, \mathfrak{g}) , [~,~]_\ltimes)
\end{align*}
preserves the corresponding brackets. This implies that $( C^\bullet_\mathrm{Hom} (\mathfrak{g}, \mathfrak{g}) , [~,~]_\mathsf{FN}) $ is a graded Lie algebra.

(ii) This part follows from the definition of the Fr\"olicher-Nijenhuis bracket and the identity (\ref{delta-hom-morphism}).
\end{proof}


\medskip

\noindent {\bf Nijenhuis operators on Hom-Lie algebras.} Let $(\mathfrak{g}, [~,~], \alpha)$ be a Hom-Lie algebra and $N : \mathfrak{g} \rightarrow \mathfrak{g}$ be a linear map satisfying $\alpha \circ N = N \circ \alpha$. Thus, $N \in C^1_\mathrm{Hom} (\mathfrak{g}, \mathfrak{g})$. We define a bilinear skew-symmetric bracket 
\begin{align*}
[x, y]^N := [Nx, y] + [x, Ny] - N [x, y],  \text{ for } x, y \in \mathfrak{g}.
\end{align*}
Then we have $\alpha ([x, y]^N) = [\alpha (x) , \alpha (y)]^N$ for $x, y \in \mathfrak{g}$.
Note that the bracket $[~,~]^N$ is trivial if $N$ is a Hom-derivation for the given Hom-Lie bracket $[~,~]$. Therefore, $[~,~]^N$ measures the Hom-derivation property of $N$ on the given Hom-Lie algebra. The following result gives a necessary and sufficient condition under which the bracket $[~,~]^N$ defines a new Hom-Lie algebra structure on $\mathfrak{g}.$

\begin{proposition}\label{deformed-prop}
Let $(\mathfrak{g}, [~,~], \alpha)$  be a Hom-Lie algebra and $N : \mathfrak{g} \rightarrow \mathfrak{g}$ be a linear map satisfying $\alpha \circ N = N \circ \alpha$. Then $(\mathfrak{g}, [~,~]^N, \alpha)$ is a Hom-Lie algebra if and only if $\delta_\mathrm{Hom} ([ N,N]_\mathsf{FN} ) = 0$.
\end{proposition}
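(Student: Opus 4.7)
The plan is to translate the Hom-Jacobi identity for $[~,~]^N$ into a Maurer-Cartan condition inside the Nijenhuis-Richardson graded Lie algebra and then apply part (ii) of the preceding theorem. The pivotal observation is that the element of $C^2_\mathrm{Hom}(\mathfrak{g},\mathfrak{g})$ representing the deformed bracket $[~,~]^N$ is precisely $\delta_\mathrm{Hom} N$ itself.

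First I would verify that identification directly. Plugging $f = N \in C^1_\mathrm{Hom}(\mathfrak{g},\mathfrak{g})$ into the cochain differential formula (\ref{hom-lie-coho}) with the adjoint representation, the exponent $n-1$ on $\alpha$ collapses to $0$ and the defining sum reduces to
\[ (\delta_\mathrm{Hom} N)(x, y) \;=\; [x, Ny] - [y, Nx] - N[x, y] \;=\; [Nx, y] + [x, Ny] - N[x, y] \;=\; [x, y]^N. \]
So if we let $\mu^N \in C^2_\mathrm{Hom}(\mathfrak{g},\mathfrak{g})$ denote the element corresponding to the bracket $[~,~]^N$, then $\mu^N = \delta_\mathrm{Hom} N$. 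In particular, because $\alpha \circ N = N \circ \alpha$ puts $N$ in $C^1_\mathrm{Hom}(\mathfrak{g},\mathfrak{g})$, the cochain $\delta_\mathrm{Hom} N$ automatically lies in $C^2_\mathrm{Hom}(\mathfrak{g},\mathfrak{g})$, which says exactly that $\alpha([x,y]^N) = [\alpha(x), \alpha(y)]^N$. Skew-symmetry of $[~,~]^N$ is immediate from that of $[~,~]$, so all axioms of a multiplicative Hom-Lie algebra hold for $(\mathfrak{g}, [~,~]^N, \alpha)$ except possibly the Hom-Jacobi identity.

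Next I would invoke the Maurer-Cartan characterization of Hom-Lie brackets recalled in Section \ref{sec2}: the Hom-Jacobi identity for $[~,~]^N$ is equivalent to $[\mu^N, \mu^N]_\mathsf{NR} = 0$. Substituting $\mu^N = \delta_\mathrm{Hom} N$, this becomes
\[ [\delta_\mathrm{Hom} N, \delta_\mathrm{Hom} N]_\mathsf{NR} \;=\; 0. \]
Finally, part (ii) of the theorem just established states that $\delta_\mathrm{Hom}$ is a morphism of graded Lie algebras from $(C^\bullet_\mathrm{Hom}(\mathfrak{g},\mathfrak{g}), [~,~]_\mathsf{FN})$ to $(C^{\bullet+1}_\mathrm{Hom}(\mathfrak{g},\mathfrak{g}), [~,~]_\mathsf{NR})$; equivalently, the identity (\ref{delta-hom-morphism}) gives $[\delta_\mathrm{Hom} N, \delta_\mathrm{Hom} N]_\mathsf{NR} = \delta_\mathrm{Hom}([N, N]_\mathsf{FN})$. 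Chaining the equivalences yields the desired statement.

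There is no serious obstacle in this argument; everything is structural once the identification $\mu^N = \delta_\mathrm{Hom} N$ is made. The only care required is the sign bookkeeping in the computation of $(\delta_\mathrm{Hom} N)(x,y)$ and the verification that all automatic compatibilities (skew-symmetry, $\alpha$-multiplicativity) do follow from $N \in C^1_\mathrm{Hom}(\mathfrak{g},\mathfrak{g})$, so that the Hom-Jacobi identity is the only nontrivial condition to analyse.
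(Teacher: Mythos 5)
Your proposal is correct and follows essentially the same route as the paper's own proof: the identification $(\delta_\mathrm{Hom} N)(x,y)=[x,y]^N$, the Maurer--Cartan characterization of Hom-Lie brackets via $[~,~]_\mathsf{NR}$, and the morphism identity $\delta_\mathrm{Hom}([N,N]_\mathsf{FN})=[\delta_\mathrm{Hom}N,\delta_\mathrm{Hom}N]_\mathsf{NR}$. The only difference is that you spell out the (automatic) skew-symmetry and $\alpha$-multiplicativity checks, which the paper leaves implicit.
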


\begin{proof}
From the definition of $\delta_\mathrm{Hom}$, we observe that
\begin{align*}
(\delta_\mathrm{Hom} N)(x, y) = [x, Ny] + [Nx, y] - N [x,y] = [x, y]^N, ~ \text{ for } x, y \in \mathfrak{g}.
\end{align*}
On the other hand, $[~,~]^N$ is a Hom-Lie bracket if and only if $[ \delta_\mathrm{Hom} N, \delta_\mathrm{Hom} N ]_\mathsf{NR} = 0$. However, we have
\begin{align*}
\delta_\mathrm{Hom} ([N, N]_\mathsf{FN}) = [  \delta_\mathrm{Hom} N, \delta_\mathrm{Hom} N ]_\mathsf{NR} .
\end{align*}
This implies that $[~,~]^N$ is a Hom-Lie bracket if and only if $\delta_\mathrm{Hom} ([N, N]_\mathsf{FN}) = 0$.
\end{proof}

Note that, for a linear map $N : \mathfrak{g} \rightarrow \mathfrak{g}$ satisfying $\alpha \circ N = N \circ \alpha$ (equivalently, $N \in C^1_\mathrm{Hom} (\mathfrak{g}, \mathfrak{g})$), we have from (\ref{explicit-fn}) that
\begin{align}\label{nn-fn}
[N, N ]_\mathsf{FN} (x, y) =~& 2 \big(    [Nx, Ny] - N ( [Nx, y] + [x, Ny] - N[x, y]) \big) \nonumber \\
=~& 2 \big(   [Nx, Ny] - N [x, y]^N \big).
\end{align}

\begin{definition} Let $(\mathfrak{g}, [~,~], \alpha)$ be a Hom-Lie algebra.
A linear map $N : \mathfrak{g} \rightarrow \mathfrak{g}$ satisfying $\alpha \circ N = N \circ \alpha$ is said to be a {\bf Nijenhuis operator} on the Hom-Lie algebra if
\begin{align}\label{nij-defn}
[Nx, Ny] = N ([x, y]^N),~ \text{ for } x, y \in \mathfrak{g}.
\end{align}
\end{definition}

Note that the condition (\ref{nij-defn}) is equivalent to say that $[N, N]_\mathsf{FN} = 0$, i.e. $N$ is a Maurer-Cartan element in the Fr\"{o}licher-Nijenhuis algebra. Nijenhuis operators on Hom-Lie algebras are studied in \cite{das-sen,LCM}. The following result has been proven there which can be seen as a consequence of our Proposition \ref{deformed-prop} and the identity (\ref{nn-fn}).

\begin{proposition}
Let $(\mathfrak{g}, [~,~], \alpha)$ be a Hom-Lie algebra and $N : \mathfrak{g} \rightarrow \mathfrak{g}$ be a Nijenhuis operator on it. Then $(\mathfrak{g}, [~,~]^N, \alpha)$ is a Hom-Lie algebra and $N : (\mathfrak{g}, [~,~]^N, \alpha) \rightarrow (\mathfrak{g}, [~,~], \alpha)$ is a morphism of Hom-Lie algebras.
\end{proposition}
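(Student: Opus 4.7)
The plan is to derive this proposition directly from the Nijenhuis condition $[N,N]_{\mathsf{FN}}=0$ together with Proposition \ref{deformed-prop} and the identity (\ref{nn-fn}), rather than re-verifying the Hom-Jacobi identity for $[~,~]^N$ by hand.

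First, I would record that since $N$ is a Nijenhuis operator, $[N,N]_{\mathsf{FN}}=0$, and therefore trivially $\delta_{\mathrm{Hom}}([N,N]_{\mathsf{FN}})=0$. Invoking Proposition \ref{deformed-prop}, this immediately yields that $(\mathfrak{g},[~,~]^N,\alpha)$ satisfies the Hom-Jacobi identity. The skew-symmetry of $[~,~]^N$ is clear from its defining formula, and the multiplicativity $\alpha([x,y]^N)=[\alpha(x),\alpha(y)]^N$ follows from $\alpha\circ N=N\circ\alpha$ together with the multiplicativity of the original bracket, which I would note in one line. Thus $(\mathfrak{g},[~,~]^N,\alpha)$ is a (multiplicative) Hom-Lie algebra.

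For the morphism statement, the compatibility with the twist maps is exactly the hypothesis $\alpha\circ N=N\circ\alpha$. The bracket-preservation property $N([x,y]^N)=[Nx,Ny]$ for all $x,y\in\mathfrak{g}$ is precisely the defining identity (\ref{nij-defn}) of a Nijenhuis operator; alternatively, it is the vanishing of the right-hand side of (\ref{nn-fn}) rearranged. Hence $N:(\mathfrak{g},[~,~]^N,\alpha)\to(\mathfrak{g},[~,~],\alpha)$ is a Hom-Lie algebra morphism.

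There is essentially no obstacle here: the whole content has been front-loaded into Proposition \ref{deformed-prop} and identity (\ref{nn-fn}). The only mild subtlety worth mentioning is the logical flow -- one must verify the Hom-Jacobi identity for $[~,~]^N$ \emph{before} being allowed to speak of $N$ as a morphism of Hom-Lie algebras, since the notion of morphism in the paper is defined only between Hom-Lie algebras. Aside from that, the proof is a two-line unpacking of definitions.
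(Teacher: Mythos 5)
Your proposal is correct and matches the paper's own (sketched) argument exactly: the paper likewise derives the Hom-Jacobi identity for $[~,~]^N$ from $\delta_{\mathrm{Hom}}([N,N]_{\mathsf{FN}})=0$ via Proposition \ref{deformed-prop}, and reads off the morphism property from the defining identity (\ref{nij-defn}) together with (\ref{nn-fn}). Your added remarks on skew-symmetry, multiplicativity, and the logical ordering are fine and only make explicit what the paper leaves implicit.
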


The Hom-Lie algebra $(\mathfrak{g}, [~,~]^N, \alpha)$ in the above proposition is said to be the deformed Hom-Lie algebra induced by the Nijenhuis operator $N$. This deformed Hom-Lie algebra $(\mathfrak{g}, [~,~]^N, \alpha)$ is also compatible with the given Hom-Lie algebra $(\mathfrak{g}, [~,~], \alpha)$ in the sense that, for all $t \in {\bf k}$, the triple $(\mathfrak{g}, [~,~] + t [~,~]^N, \alpha)$ is a Hom-Lie algebra \cite{das-comp}. Since $[~,~]^N = \delta_\mathrm{Hom} N$, we have that $(\mathfrak{g}, [~,~] + t \delta_\mathrm{Hom} N, \alpha)$ is a Hom-Lie algebra. In other words, the Nijenhuis operator $N$ induces a linear deformation of the given Hom-Lie algebra $(\mathfrak{g}, [~,~], \alpha)$. This result has been studied in \cite{LCM}.

We have seen that a Nijenhuis operator $N$ on a Hom-Lie algebra $(\mathfrak{g}, [~,~], \alpha)$ can be seen as a Maurer-Cartan element in the graded Lie algebra $(C^\bullet_\mathrm{Hom}(\mathfrak{g}, \mathfrak{g}), [~,~]_\mathsf{FN} )$. This Maurer-Cartan characterization of a Nijenhuis operator allows one to study cohomology and deformations of a Nijenhuis operator (similar to the study of Hom-Lie algebra morphisms given in Section \ref{sec3}).

\medskip

\noindent {\bf A matched pair between Nijenhuis-Richardson algebra and Fr\"{o}licher-Nijenhuis algebra.} Here we will show that the Nijenhuis-Richardson algebra and the Fr\"{o}licher-Nijenhuis algebra form a matched pair of graded Lie algebras. We start with the following definition.

\begin{definition}
A {\bf matched pair of graded Lie algebras} consists of a quadruple 
\begin{align*}
\big( (\mathcal{A}, [~,~]_\mathcal{A}), (\mathcal{B}, [~, ~]_\mathcal{B}), \rho, \psi \big)
\end{align*}
of two graded Lie algebras $ (\mathcal{A}, [~,~]_\mathcal{A})$ and $ (\mathcal{B}, [~, ~]_\mathcal{B})$ together with two degree $0$ bilinear maps $\rho : \mathcal{A} \times \mathcal{B} \rightarrow \mathcal{B}, ~(a,b) \mapsto \rho (a) (b)$ and $\psi : \mathcal{B} \times \mathcal{A} \rightarrow \mathcal{A}, ~ (b,a) \mapsto \psi (b) (a)$ satisfying
\begin{align*}
\rho ([a, a']_\mathcal{A})(b) =~& \rho (a) \rho (a')(b) - (-1)^{|a||a'|}  \rho (a') \rho (a)(b),\\
\rho (a) ([b, b']_\mathcal{B} ) =~& [ \rho (a) b, b']_\mathcal{B} + (-1)^{|a||b|} [b, \rho (a) b']_\mathcal{B}  +    (-1)^{(|a| + |b|)|b'|}   \rho (\psi (b') a) b -   (-1)^{|a||b|}  \rho ( \psi (b) a) b',\\
\psi ([b, b']_\mathcal{B})(a) =~& \psi (b) \psi (b')(a) - (-1)^{|b||b'|}  \psi (b') \psi (b)(a),\\
\psi (b) ([a, a']_\mathcal{A}) =~&  [ \psi (b) a, a']_\mathcal{A} + (-1)^{|b||a|} [a, \psi (b) a']_\mathcal{A}  +   (-1)^{(|b| + |a|)|a'|}   \psi (\rho (a') b) a -  (-1)^{|b||a|}  \psi ( \rho (a) b) a',
\end{align*}
for all homogeneous elements $a, a' \in \mathcal{A}$ and $b, b' \in \mathcal{B}$.
\end{definition}

When $\psi = 0$ (resp. $\rho = 0$), it follows from the above definition that $\rho$ (resp. $\psi$) defines an action of the graded Lie algebra $\mathcal{A}$ on $\mathcal{B}$ (resp. $\mathcal{B}$ on $\mathcal{A}$). The above definition is a graded version of the notion of a matched pair of Lie algebras \cite{koss}. Similar to a matched pair of Lie algebras, we have the following result in the graded context.

\begin{proposition}\label{match-pro} Let $( (\mathcal{A}, [~,~]_\mathcal{A}), (\mathcal{B}, [~,~]_\mathcal{B}), \rho, \psi)$ be a matched pair of graded Lie algebras. Then the direct sum $\mathcal{A} \oplus \mathcal{B}$ carries a graded Lie algebra structure with bracket
\begin{align}\label{match-pro-for}
[(a, b), (a', b')]_{\bowtie} :=  \big(  [a, a']_\mathcal{A} + \psi (b) a' - (-1)^{|b||a'|} \psi (b')a ~ \! , \! ~ [b, b']_\mathcal{B} + \rho (a) b' - (-1)^{|b||a'|} \rho (a')b \big),
\end{align}
for homogeneous elements $(a, b), (a', b') \in \mathcal{A} \oplus \mathcal{B}$.
\end{proposition}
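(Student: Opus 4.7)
The plan is to verify the two defining axioms of a graded Lie algebra for $(\mathcal{A} \oplus \mathcal{B}, [~,~]_\bowtie)$, namely graded skew-symmetry and the graded Jacobi identity. A homogeneous element $(a,b) \in \mathcal{A} \oplus \mathcal{B}$ has $|a|=|b|=:|(a,b)|$, so $(-1)^{|b||a'|} = (-1)^{|(a,b)||(a',b')|}$ throughout; this identification will be used implicitly.

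For graded skew-symmetry, I would simply compute component-wise. The $\mathcal{A}$-part of $[(a,b),(a',b')]_\bowtie$ is $[a,a']_\mathcal{A} + \psi(b)a' - (-1)^{|b||a'|}\psi(b')a$, and applying $-(-1)^{|(a,b)||(a',b')|}$ to the $\mathcal{A}$-part of $[(a',b'),(a,b)]_\bowtie$ reproduces this expression once we invoke the graded skew-symmetry of $[~,~]_\mathcal{A}$. The $\mathcal{B}$-component is handled identically. No matched-pair axiom is needed here.

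For the graded Jacobi identity, the strategy is to expand $[[(a,b),(a',b')]_\bowtie,(a'',b'')]_\bowtie$ together with its two cyclic permutations (with the appropriate Koszul signs), and to collect terms in the $\mathcal{A}$-component and the $\mathcal{B}$-component separately. Each component will split into four bundles of terms: (i) the pure $[~,~]_\mathcal{A}$ (resp.\ $[~,~]_\mathcal{B}$) terms, which cancel by the graded Jacobi identity already present in $\mathcal{A}$ (resp.\ $\mathcal{B}$); (ii) the iterated-action terms of the form $\psi(b)\psi(b')(a'')$ on the $\mathcal{A}$-side, which are handled by the third matched-pair axiom, and symmetrically $\rho(a)\rho(a')(b'')$ on the $\mathcal{B}$-side via the first axiom; (iii) the derivation-type terms $\psi(b)[a',a'']_\mathcal{A}$ together with the cross-terms $\psi(\rho(a)b'')a'$ etc., which cancel exactly by the fourth axiom, and symmetrically using the second axiom on the $\mathcal{B}$-side. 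Ungraded matched pairs of Lie algebras admit precisely this kind of term-by-term accounting \cite{koss}; our task is the graded analogue.

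The principal obstacle is purely the bookkeeping of Koszul signs, since three homogeneous elements distributed across two components produce a proliferation of factors $(-1)^{|b||a'|}$, $(-1)^{(|a|+|b|)|a''|}$, and so on that mix nontrivially under iteration of $[~,~]_\bowtie$. To manage this, I would first record the six iterated brackets in symbolic form, labelling each summand by its provenance (a $\psi\psi$-term, a $\psi[~,~]_\mathcal{A}$-term, a $\psi\rho$-cross-term, etc.), and then group those sharing a common provenance and verify that their signs align with the corresponding matched-pair axiom. After this sorting is done, each of the four axioms enforces the vanishing of one bundle, and the graded Jacobi identity follows. The projections $\mathcal{A} \oplus \mathcal{B} \to \mathcal{A}$ and $\mathcal{A} \oplus \mathcal{B} \to \mathcal{B}$ identify $\mathcal{A}$ as a graded Lie subalgebra (when $\psi=0$ on its image) and show that in the special case $\psi = 0$ one recovers precisely Proposition \ref{semidirect}, which is a useful sanity check along the way.
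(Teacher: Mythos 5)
Your approach is correct, but there is nothing in the paper to compare it against: the paper states Proposition \ref{match-pro} with no proof at all, treating it as the routine graded analogue of the classical bicrossed-product construction for matched pairs of Lie algebras from \cite{koss}. Your direct verification is the natural (indeed the only) route. The skew-symmetry check is right, and your observation that $|a|=|b|$ for a homogeneous element $(a,b)$, so that all the signs $(-1)^{|b||a'|}$ collapse to $(-1)^{|(a,b)||(a',b')|}$, is exactly the point that makes the bookkeeping tractable. Your sorting of the Jacobi identity into bundles is the correct one: in the $\mathcal{A}$-component the terms are exhausted by (i) pure $[~,~]_\mathcal{A}$ terms (Jacobi in $\mathcal{A}$), (ii) the $\psi\psi$ terms against the $\psi([~,~]_\mathcal{B})$ terms (third axiom), and (iii) the $[\psi(\cdot)\cdot,\cdot]_\mathcal{A}$, $\psi(\cdot)[~,~]_\mathcal{A}$ and $\psi(\rho(\cdot)\cdot)(\cdot)$ terms (fourth axiom), with the mirror statement in the $\mathcal{B}$-component using the first and second axioms. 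Be aware, though, that what you have written is a roadmap rather than a proof: the entire mathematical content here is the Koszul sign verification that each bundle actually cancels, and that computation is deferred. Two small points: it is the inclusions $\mathcal{A}\hookrightarrow\mathcal{A}\oplus\mathcal{B}$ and $\mathcal{B}\hookrightarrow\mathcal{A}\oplus\mathcal{B}$, not the projections, that exhibit $\mathcal{A}$ and $\mathcal{B}$ as graded Lie subalgebras, and this holds unconditionally (no hypothesis on $\psi$ is needed, since $\psi(0)=0$ and $\rho(a)0=0$); and the sanity check against Proposition \ref{semidirect} when $\psi=0$ is a good one to keep.
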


 The graded Lie algebra constructed in the above proposition is called the {\em bicrossed product} (also called the {\em matched product}), and it is denoted by $(\mathcal{A}, [~,~]_\mathcal{A}) \bowtie (\mathcal{B}, [~,~]_\mathcal{B})$ or simply by $\mathcal{A} \bowtie \mathcal{B}$. 
 It follows from the bracket (\ref{match-pro-for}) that both $(\mathcal{A}, [~,~]_\mathcal{A})$ and $(\mathcal{B}, [~,~]_\mathcal{B})$ are graded Lie subalgebras of $\mathcal{A} \bowtie \mathcal{B}$.

 \begin{remark}\label{remark-matched}
     Let $(\mathcal{A}, [~,~]_\mathcal{A})$ and $(\mathcal{B}, [~,~]_\mathcal{B})$ be two graded Lie algebras. Suppose there is some graded Lie bracket $\llbracket ~,~ \rrbracket$ on the direct sum $\mathcal{A} \oplus \mathcal{B}$ for which $(\mathcal{A}, [~,~]_\mathcal{A})$ and $(\mathcal{B}, [~,~]_\mathcal{B})$ are both graded Lie subalgebras of $(\mathcal{A} \oplus \mathcal{B}, \llbracket ~,~ \rrbracket)$. Then there are unique degree $0$ bilinear maps $\rho : \mathcal{A} \times \mathcal{B} \rightarrow \mathcal{B}$ and $\psi : \mathcal{B} \times \mathcal{A} \rightarrow \mathcal{A}$ that makes $\big(  (\mathcal{A}, [~,~]_\mathcal{A}), (\mathcal{B}, [~,~]_\mathcal{B}), \rho, \psi  \big)$ into a matched pair of graded Lie algebras such that the associated bicrossed product coincides with $(\mathcal{A} \oplus \mathcal{B} , \llbracket ~,~ \rrbracket)$. The maps $\rho$ and $\psi$ are given by
     \begin{align*}
         \rho (a) b :=  \mathrm{pr}_2 \llbracket (a,0), (0, b) \rrbracket   ~~~~ \text{ and } ~~~~ \psi (b) a := \mathrm{pr}_1 \llbracket (0, b) , (a, 0) \rrbracket, \text{ for } a \in \mathcal{A}, b \in \mathcal{B}.
     \end{align*}
     Here $\mathrm{pr}_1 : \mathcal{A} \oplus \mathcal{B} \rightarrow \mathcal{A}$ and $\mathrm{pr}_2 : \mathcal{A} \oplus \mathcal{B} \rightarrow \mathcal{B}$ are the projections.
 \end{remark}


Let $(\mathfrak{g}, [~,~], \alpha)$ be a Hom-Lie algebra. Consider the graded vector space $C^{\bullet +1 }_{\mathrm{Hom}} (\mathfrak{g}, \mathfrak{g}) \oplus C^{\bullet }_{\mathrm{Hom}} (\mathfrak{g}, \mathfrak{g})$ and define a bilinear graded skew-symmetric bracket on it by
\begin{align}\label{mmaatt}
\llbracket (P, E), (Q, F) \rrbracket :=  \big(   [P, Q]_\mathsf{NR}  +  [ E, Q]_\mathsf{FN} - (-1)^{mn}   [F,P]_\mathsf{FN}    ~ ,~ [E, F]_\mathsf{FN} + i_P F - (-1)^{mn} i_Q E   \big),
\end{align}
for $(P,E) \in C^{m+1}_\mathrm{Hom} (\mathfrak{g}, \mathfrak{g}) \oplus C^m_\mathrm{Hom} (\mathfrak{g}, \mathfrak{g})$ and $(Q, F) \in C^{n+1}_\mathrm{Hom} (\mathfrak{g}, \mathfrak{g})  \oplus C^n_\mathrm{Hom} (\mathfrak{g}, \mathfrak{g})$. Then we have the following result.

\begin{proposition}
    Let $(\mathfrak{g}, [~,~], \alpha)$ be a Hom-Lie algebra. Then $\big( C^{\bullet +1 }_{\mathrm{Hom}} (\mathfrak{g}, \mathfrak{g}) \oplus C^{\bullet }_{\mathrm{Hom}} (\mathfrak{g}, \mathfrak{g})  , \llbracket ~, ~ \rrbracket \big)$ is a graded Lie algebra.
\end{proposition}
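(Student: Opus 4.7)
The plan is to realize $\llbracket\cdot,\cdot\rrbracket$ as the pullback of the known graded Lie bracket $[\cdot,\cdot]_{\ltimes}$ from Theorem~\ref{semi-pro-thm} (applied with $\mathfrak{h} = \mathfrak{g}$) along an explicit linear bijection, and thereby inherit the graded Lie algebra axioms automatically. The conceptual picture is that $\big(C^{\bullet+1}_{\mathrm{Hom}}(\mathfrak{g},\mathfrak{g}) \oplus C^{\bullet}_{\mathrm{Hom}}(\mathfrak{g},\mathfrak{g}),\ [\cdot,\cdot]_{\ltimes}\big)$ admits two complementary graded Lie subalgebras, namely $C^{\bullet+1}_{\mathrm{Hom}}(\mathfrak{g},\mathfrak{g}) \oplus 0$ (trivially) and the graph $\mathrm{Gr}((-1)^\bullet \delta_{\mathrm{Hom}})$ (by Proposition~\ref{minus-morphism}); transporting the bracket via the identification of the graph with $C^{\bullet}_{\mathrm{Hom}}(\mathfrak{g},\mathfrak{g})$ should yield exactly $\llbracket\cdot,\cdot\rrbracket$.

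Concretely, I would introduce the degree-preserving linear bijection $\Psi(P,E) := (P + (-1)^m \delta_{\mathrm{Hom}} E,\ E)$ on the degree-$m$ component, whose inverse sends $(P,E) \mapsto (P - (-1)^m \delta_{\mathrm{Hom}} E, E)$, and verify the intertwining relation $\Psi\bigl(\llbracket u, v \rrbracket\bigr) = [\Psi(u), \Psi(v)]_{\ltimes}$ for all homogeneous $u, v$. Distributing the contractions, the second component of $[\Psi(P,E), \Psi(Q,F)]_{\ltimes}$ expands to $[E,F]_{\mathsf{C}} + (-1)^m i_{\delta_{\mathrm{Hom}} E} F - (-1)^{(m+1)n} i_{\delta_{\mathrm{Hom}} F} E + i_P F - (-1)^{mn} i_Q E$, which by the defining identity (\ref{fnn}) of the Fr\"{o}licher-Nijenhuis bracket equals $[E,F]_{\mathsf{FN}} + i_P F - (-1)^{mn} i_Q E$, matching the second component of $\llbracket (P,E),(Q,F) \rrbracket$ on the nose.

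For the first component, after applying the graded Lie algebra morphism property $\delta_{\mathrm{Hom}}[E,F]_{\mathsf{FN}} = [\delta_{\mathrm{Hom}} E, \delta_{\mathrm{Hom}} F]_{\mathsf{NR}}$ of identity (\ref{delta-hom-morphism}) to cancel the $[\delta_{\mathrm{Hom}} E, \delta_{\mathrm{Hom}} F]_{\mathsf{NR}}$ contribution, the intertwining reduces to
\begin{align*}
[E, Q]_{\mathsf{FN}} - (-1)^{mn} [F, P]_{\mathsf{FN}} &= (-1)^n [P, \delta_{\mathrm{Hom}} F]_{\mathsf{NR}} + (-1)^m [\delta_{\mathrm{Hom}} E, Q]_{\mathsf{NR}}\\
&\quad - (-1)^{m+n} \delta_{\mathrm{Hom}}(i_P F) + (-1)^{m+n+mn} \delta_{\mathrm{Hom}}(i_Q E).
\end{align*}
This identity can be verified by rewriting both Fr\"{o}licher-Nijenhuis brackets on the left through the alternative form $[A,B]_{\mathsf{FN}} = [A, \delta_{\mathrm{Hom}} B]_{\mathsf{NR}} + (-1)^a \delta_{\mathrm{Hom}}(i_A B)$ displayed just after equation (\ref{fnn}), then applying the graded derivation property of $\delta_{\mathrm{Hom}} = -[\mu,\cdot]_{\mathsf{NR}}$ on the Nijenhuis-Richardson side and the graded skew-symmetry of $[\cdot,\cdot]_{\mathsf{NR}}$ to bring both sides to the same expression in $[\delta_{\mathrm{Hom}} E, Q]_{\mathsf{NR}}$, $[\delta_{\mathrm{Hom}} F, P]_{\mathsf{NR}}$, $\delta_{\mathrm{Hom}}(i_P F)$ and $\delta_{\mathrm{Hom}}(i_Q E)$.

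The main obstacle will be the sign bookkeeping in this last identity: the degree $m$ refers to the \emph{shifted} degree of $P \in C^{m+1}_{\mathrm{Hom}}$, whereas the Fr\"{o}licher-Nijenhuis bracket is of degree $0$ in the \emph{unshifted} grading of $C^{\bullet}_{\mathrm{Hom}}$, so reconciling the two conventions through the chain of skew-symmetries, derivations, and contractions demands care. Nevertheless, the cancellations are dictated in advance by the matched pair picture outlined above, so the verification should go through cleanly. Once the intertwining is established, the graded skew-symmetry and the graded Jacobi identity for $\llbracket\cdot,\cdot\rrbracket$ follow at once from those of $[\cdot,\cdot]_{\ltimes}$, completing the proof.
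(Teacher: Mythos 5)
Your proposal is correct and follows essentially the same route as the paper: the paper's proof introduces exactly the map $\Psi(P,E) = (P + (-1)^m \delta_{\mathrm{Hom}} E, E)$, verifies the intertwining identity $\Psi\bigl(\llbracket u,v\rrbracket\bigr) = [\Psi(u),\Psi(v)]_\ltimes$ using (\ref{fnn}), (\ref{delta-hom-morphism}) and the alternative form $[P,Q]_{\mathsf{FN}} = [P,\delta_{\mathrm{Hom}}Q]_{\mathsf{NR}} + (-1)^m \delta_{\mathrm{Hom}}(i_P Q)$, and then transports skew-symmetry and the Jacobi identity through the isomorphism $\Psi$. The residual first-component identity you isolate does check out with the signs as stated, so the argument goes through as planned.
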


\begin{proof}
    We consider the degree $0$ map $\Psi :  C^{\bullet +1 }_{\mathrm{Hom}} (\mathfrak{g}, \mathfrak{g}) \oplus C^{\bullet }_{\mathrm{Hom}} (\mathfrak{g}, \mathfrak{g}) \rightarrow  C^{\bullet +1 }_{\mathrm{Hom}} (\mathfrak{g}, \mathfrak{g}) \oplus C^{\bullet }_{\mathrm{Hom}} (\mathfrak{g}, \mathfrak{g})$ given by
    \begin{align*}
        \Psi (P, E) := (P + (-1)^m \delta_{\mathrm{Hom}} E, E), ~ \text{ for } (P, E) \in C^{m +1 }_{\mathrm{Hom}} (\mathfrak{g}, \mathfrak{g}) \oplus  C^{m}_{\mathrm{Hom}} (\mathfrak{g}, \mathfrak{g}).
    \end{align*}
    We claim that $\Psi : \big(   C^{\bullet +1 }_{\mathrm{Hom}} (\mathfrak{g}, \mathfrak{g}) \oplus C^{\bullet }_{\mathrm{Hom}} (\mathfrak{g}, \mathfrak{g}), \llbracket ~, ~ \rrbracket \big) \rightarrow \big(   C^{\bullet +1 }_{\mathrm{Hom}} (\mathfrak{g}, \mathfrak{g}) \oplus C^{\bullet }_{\mathrm{Hom}} (\mathfrak{g}, \mathfrak{g}), [~,~]_\ltimes  \big)$ preserves the brackets. For any $(P, E) \in C^{m +1 }_{\mathrm{Hom}} (\mathfrak{g}, \mathfrak{g}) \oplus  C^{m}_{\mathrm{Hom}} (\mathfrak{g}, \mathfrak{g})$ and $(Q, F) \in  C^{n +1 }_{\mathrm{Hom}} (\mathfrak{g}, \mathfrak{g}) \oplus  C^{n}_{\mathrm{Hom}} (\mathfrak{g}, \mathfrak{g})$, we observe that
    \begin{align}\label{iso-1-1}
&\Psi \big(    \llbracket (P,E), (Q,F) \rrbracket \big)  \nonumber \\
&= \big(    [P,Q]_\mathsf{NR} + [E,Q]_\mathsf{FN} - (-1)^{mn} [F,P]_\mathsf{FN} + (-1)^{m+n} ~ \delta_\mathrm{Hom}  ( [E,F]_\mathsf{FN} + i_P F  - (-1)^{mn} i_Q E ),   \nonumber \\
& \qquad [E,F]_\mathsf{FN} + i_P F  - (-1)^{mn} i_Q E  \big)  \nonumber  \\
&= \big(  [P,Q]_\mathsf{NR}  - (-1)^{m (n+1)} [Q, E]_\mathsf{FN} + (-1)^{mn + (m+1) n } [P,F]_\mathsf{FN} + (-1)^{m+n} \delta_\mathrm{Hom} [E,F]_\mathsf{FN}   \nonumber \\
& \qquad + (-1)^{m+n} \delta_\mathrm{Hom} ( i_P F ) - (-1)^{m+n+mn} \delta_\mathrm{Hom} (i_Q E),  \nonumber  \\
& \qquad \quad [E,F]_\mathsf{C} + (-1)^m i_{(\delta_\mathrm{Hom} E)} F  - (-1)^{(m+1) n } i_{(\delta_\mathrm{Hom} F)} E + i_P F - (-1)^{mn} i_Q E  \big)  \nonumber \\
&=  \big(    [P,Q]_\mathsf{NR}  -(-1)^{m(n+1)} [Q, \delta_\mathrm{Hom} E ]_\mathrm{NR} + (-1)^n [P, \delta_\mathrm{Hom} F]_\mathsf{NR} + (-1)^{m+n} [ \delta_\mathrm{Hom} E, \delta_\mathrm{Hom} F]_\mathsf{NR}, \\
& \qquad \quad [E,F]_\mathsf{C} + i_{(\delta_\mathrm{Hom} E)}(-1)^m F  - (-1)^{(m+1) n }i_{(\delta_\mathrm{Hom} F)} E   + i_P F  - (-1)^{mn} i_Q E  \big).  \nonumber 
\end{align}
On the other hand, 
\begin{align}\label{iso-2-2}
&[ \Psi (P, E), \Psi (Q, F) ]_\ltimes  \nonumber  \\
&= [ ( P + (-1)^m \delta_\mathrm{Hom} E, E), ( Q + (-1)^n \delta_\mathrm{Hom} F, F ) ]_\ltimes  \nonumber \\
&= \big(    [P, Q]_\mathsf{NR} + (-1)^n [P, \delta_\mathrm{Hom} F ]_\mathsf{NR} + (-1)^m [\delta_\mathrm{Hom} E, Q]_\mathsf{NR} + (-1)^{m+n} [\delta_\mathrm{Hom} E, \delta_\mathrm{Hom} F ]_\mathsf{NR} , \\
&\qquad \quad i_P F + (-1)^m i_{(\delta_\mathrm{Hom} E )} F - (-1)^{mn} i_Q E  - (-1)^{mn + n} i_{(\delta_\mathrm{Hom} F)} E + [E, F]_\mathsf{C} \big). \nonumber 
\end{align}
It follows from the expressions in (\ref{iso-1-1}) and (\ref{iso-2-2}) that
$\Psi \big(   [(P,E), (Q,F)]_{\bowtie} \big) = [ \Psi (P, E), \Psi (Q, F) ]_\ltimes.$ This proves our claim. Since $\Psi$ is an isomorphism of graded vector spaces, the graded skew-symmetry and the graded Jacobi identity of the bracket $\llbracket ~, ~ \rrbracket$ follows from the corresponding properties of the bracket $[~,~]_\ltimes$.
\end{proof}

It follows from (\ref{mmaatt}) that the Nijenhuis-Richardson algebra $(  C^{\bullet +1 }_{\mathrm{Hom}} (\mathfrak{g}, \mathfrak{g}), [~,~]_{\mathsf{NR}} )$ and the Fr\"{o}licher-Nijenhuis algebra $( C^\bullet_{\mathrm{Hom}} (\mathfrak{g}, \mathfrak{g}), [~,~]_{\mathsf{FN}})$ are both graded Lie subalgebras of $\big(  C^{\bullet +1 }_{\mathrm{Hom}} (\mathfrak{g}, \mathfrak{g}) \oplus C^{\bullet }_{\mathrm{Hom}} (\mathfrak{g}, \mathfrak{g})  , \llbracket ~, ~ \rrbracket \big)$. Hence by Remark \ref{remark-matched}, we get the following.

\begin{thm}
    Let $(\mathfrak{g}, [~,~], \alpha)$ be a Hom-Lie algebra. Define degree $0$ bilinear maps
\begin{align*}
&\rho : C^{\bullet +1 }_{\mathrm{Hom}} (\mathfrak{g}, \mathfrak{g}) \times C^{\bullet }_{\mathrm{Hom}} (\mathfrak{g}, \mathfrak{g}) \rightarrow C^{\bullet}_{\mathrm{Hom}} (\mathfrak{g}, \mathfrak{g}),~ \rho (P)(E) := i_P E ,\\
&\psi : C^{\bullet}_{\mathrm{Hom}} (\mathfrak{g}, \mathfrak{g}) \times C^{\bullet +1 }_{\mathrm{Hom}} (\mathfrak{g}, \mathfrak{g}) \rightarrow C^{\bullet +1 }_{\mathrm{Hom}} (\mathfrak{g}, \mathfrak{g}),~\psi (E) (P) := [E, P]_{\mathsf{FN}}.
\end{align*} 
Then the quadruple $\big(  (  C^{\bullet +1 }_{\mathrm{Hom}} (\mathfrak{g}, \mathfrak{g}), [~,~]_{\mathsf{NR}} ),  ( C^\bullet_{\mathrm{Hom}} (\mathfrak{g}, \mathfrak{g}), [~,~]_{\mathsf{FN}}), \rho, \psi \big)$ is a matched pair of graded Lie algebras. Moreover, the corresponding bicrossed product bracket coincides with (\ref{mmaatt}).
\end{thm}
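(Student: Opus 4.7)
The plan is to deduce the theorem directly from Remark \ref{remark-matched} applied to the graded Lie algebra $\big(C^{\bullet+1}_{\mathrm{Hom}}(\mathfrak{g},\mathfrak{g}) \oplus C^{\bullet}_{\mathrm{Hom}}(\mathfrak{g},\mathfrak{g}), \llbracket~,~\rrbracket\big)$ established in the preceding proposition. The remark provides a converse to Proposition \ref{match-pro}: whenever two graded Lie algebras sit as graded Lie subalgebras of an ambient bracket on their direct sum, the ambient bracket automatically arises from a unique matched pair, and the structure maps $\rho,\psi$ can be read off by projecting the mixed brackets. So the real content of the proof reduces to (i) exhibiting $(C^{\bullet+1}_{\mathrm{Hom}}(\mathfrak{g},\mathfrak{g}), [~,~]_{\mathsf{NR}})$ and $(C^{\bullet}_{\mathrm{Hom}}(\mathfrak{g},\mathfrak{g}), [~,~]_{\mathsf{FN}})$ as graded Lie subalgebras of $\llbracket~,~\rrbracket$, and (ii) identifying the recovered projection maps with the $\rho$ and $\psi$ in the statement.

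For (i), substituting $E=F=0$ into (\ref{mmaatt}) gives $\llbracket (P,0),(Q,0)\rrbracket = ([P,Q]_{\mathsf{NR}},0)$, and substituting $P=Q=0$ gives $\llbracket (0,E),(0,F)\rrbracket = (0,[E,F]_{\mathsf{FN}})$; both subspaces are closed and reproduce the expected brackets. For (ii), I would compute the cross terms from (\ref{mmaatt}): for $P \in C^{m+1}_{\mathrm{Hom}}$ and $F \in C^{n}_{\mathrm{Hom}}$ one finds
\begin{align*}
\llbracket (P,0),(0,F)\rrbracket &= \big(-(-1)^{mn}[F,P]_{\mathsf{FN}},\, i_P F\big),\\
\llbracket (0,E),(Q,0)\rrbracket &= \big([E,Q]_{\mathsf{FN}},\, -(-1)^{mn}\, i_Q E\big),
\end{align*}
so $\mathrm{pr}_2 \llbracket (P,0),(0,F)\rrbracket = i_P F = \rho(P)(F)$ and $\mathrm{pr}_1 \llbracket (0,E),(Q,0)\rrbracket = [E,Q]_{\mathsf{FN}} = \psi(E)(Q)$, matching the maps defined in the statement. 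Remark \ref{remark-matched} then yields that the quadruple is a matched pair of graded Lie algebras and that the associated bicrossed product bracket coincides with $\llbracket~,~\rrbracket$.

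As a sanity check one can substitute these $\rho,\psi$ into the bicrossed-product formula (\ref{match-pro-for}) and recover (\ref{mmaatt}) termwise; this is immediate. There is no genuine obstacle: the four matched-pair compatibility axioms are built into the graded Jacobi identity for $\llbracket~,~\rrbracket$, which was already verified in the previous proposition via the isomorphism $\Psi$ with the semidirect product. If one wished to avoid invoking Remark \ref{remark-matched}, the alternative is a direct but lengthy verification of the four axioms using Proposition \ref{cup-new-new}, Proposition \ref{delta-hom-derivation}, the pre-Lie identity for the contraction $i$, and the identity (\ref{delta-hom-morphism}); but the route through the remark is conceptually cleaner and sign-bookkeeping-free.
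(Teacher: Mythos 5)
Your proposal is correct and follows exactly the route the paper takes: the paper also observes from (\ref{mmaatt}) that both algebras are graded Lie subalgebras and then invokes Remark \ref{remark-matched}, with the maps $\rho$ and $\psi$ read off from the mixed brackets precisely as you compute them. Your explicit verification of the cross terms $\llbracket (P,0),(0,F)\rrbracket$ and $\llbracket (0,E),(Q,0)\rrbracket$ is a slightly more detailed write-up of the same argument.
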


\section{Derived bracket for Hom-Lie algebras}\label{sec5}

In this section, we define a new graded Lie bracket, which we refer as the derived bracket on the space of cochains of a Hom-Lie algebra. For any fixed $\lambda \in {\bf k}$, there is a suitable differential that makes the derived algebra into a differential graded Lie algebra. Its Maurer-Cartan elements are precisely Rota-Baxter operators of weight $\lambda$. Finally, we generalize the above differential graded Lie algebra to deal with relative Rota-Baxter operators.

Let $(\mathfrak{g}, [~,~], \alpha)$ be a Hom-Lie algebra. Consider the  graded Lie algebra $( C^{\bullet +1 }_\mathrm{Hom} (\mathfrak{g}, \mathfrak{g}) \oplus C^{\bullet }_\mathrm{Hom} (\mathfrak{g}, \mathfrak{g}), [~, ~]_\ltimes )$ given in Theorem \ref{semi-pro-thm}.
Let $\theta : C^n_{\mathrm{Hom}} (\mathfrak{g}, \mathfrak{g}) \rightarrow  C^{n +1 }_{\mathrm{Hom}} (\mathfrak{g}, \mathfrak{g})$ be the map given in (\ref{theta}) and we define
\begin{align*}
\mathrm{Gr} (\theta) := \bigoplus_n \{ (\theta P, P) |~ P \in C^n_\mathrm{Hom}(\mathfrak{g}, \mathfrak{g}) \} \subset C^{\bullet +1 }_\mathrm{Hom} (\mathfrak{g}, \mathfrak{g}) \oplus C^{\bullet }_\mathrm{Hom} (\mathfrak{g}, \mathfrak{g}).
\end{align*}
Then we have the following result.

\begin{proposition}\label{ks-prop}
With the above notations, $\mathrm{Gr }(\theta)$ is a graded Lie subalgebra of the semidirect product algebra $(C^{\bullet +1 }_{\mathrm{Hom}} (\mathfrak{g}, \mathfrak{g}) \oplus C^{\bullet}_{\mathrm{Hom}} (\mathfrak{g}, \mathfrak{g}) , [~,~]_\ltimes )$.
\end{proposition}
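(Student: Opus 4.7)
The plan is to verify that for any $P \in C^m_\mathrm{Hom}(\mathfrak{g}, \mathfrak{g})$ and $Q \in C^n_\mathrm{Hom}(\mathfrak{g}, \mathfrak{g})$ the bracket $[(\theta P, P), (\theta Q, Q)]_\ltimes$ lies in $\mathrm{Gr}(\theta)$. By the explicit formula in Theorem \ref{semi-pro-thm} (with $\mathfrak{h} = \mathfrak{g}$), this semidirect product bracket equals
\begin{align*}
\bigl([\theta P, \theta Q]_\mathsf{NR},\ [P, Q]_\mathsf{C} + i_{\theta P} Q - (-1)^{mn} i_{\theta Q} P\bigr),
\end{align*}
so the entire claim reduces to the single identity
\begin{align*}
\theta\bigl([P, Q]_\mathsf{C} + i_{\theta P} Q - (-1)^{mn} i_{\theta Q} P\bigr) = [\theta P, \theta Q]_\mathsf{NR}.
\end{align*}

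The idea is then to expand the left-hand side using two results already proved in the paper. First, Proposition \ref{tr-theta} rewrites $\theta[P, Q]_\mathsf{C}$ as $(-1)^n[\theta P, Q]_\mathsf{C} + [P, \theta Q]_\mathsf{C}$. Second, Lemma \ref{lemma-lemma}, rearranged in the form $\theta(i_A B) = i_A(\theta B) - (-1)^{|B|}[A, B]_\mathsf{C}$, handles the two contraction terms when applied with $(A,B) = (\theta P, Q)$ and with $(A,B) = (\theta Q, P)$. Substituting back, the two $[\theta P, Q]_\mathsf{C}$ contributions cancel outright, and one is left with $i_{\theta P}(\theta Q) - (-1)^{mn} i_{\theta Q}(\theta P)$ (which is exactly $[\theta P, \theta Q]_\mathsf{NR}$, since $\theta P \in C^{m+1}$ and $\theta Q \in C^{n+1}$ have NR-shifted degrees $m$ and $n$) together with a residual pair $[P, \theta Q]_\mathsf{C}$ and $(-1)^{mn+m}[\theta Q, P]_\mathsf{C}$ that cancel by the graded skew-symmetry of the cup product.

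The main obstacle will be the sign bookkeeping: the operator $\theta$ shifts cochain degree by one, so I will have to carefully distinguish the unshifted degree that controls the signs in Proposition \ref{tr-theta} and in cup-product skew-symmetry from the NR-shifted degree that controls the sign in $[\theta P, \theta Q]_\mathsf{NR}$. Once these are tracked, the verification reduces to a direct substitution and two elementary sign cancellations, which establishes the identity above and hence the closure of $\mathrm{Gr}(\theta)$ under $[~,~]_\ltimes$.
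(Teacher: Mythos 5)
Your proposal is correct and follows essentially the same route as the paper: reduce the closure of $\mathrm{Gr}(\theta)$ to the identity $\theta\bigl([P,Q]_\mathsf{C} + i_{\theta P}Q - (-1)^{mn} i_{\theta Q}P\bigr) = [\theta P, \theta Q]_\mathsf{NR}$, then combine Proposition \ref{tr-theta} with Lemma \ref{lemma-lemma} and the graded skew-symmetry of $[~,~]_\mathsf{C}$ to produce exactly the same cancellations. The only cosmetic difference is that you apply the rearranged lemma to the terms $\theta(i_{\theta P}Q)$ and $\theta(i_{\theta Q}P)$, whereas the paper applies it to $(-1)^n[\theta P,Q]_\mathsf{C}$ and $[P,\theta Q]_\mathsf{C}$; your sign bookkeeping checks out.
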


\begin{proof}
For $P \in C^m_\mathrm{Hom} (\mathfrak{g}, \mathfrak{g})$ and $Q \in C^n_\mathrm{Hom} (\mathfrak{g}, \mathfrak{g})$, we have
\begin{align*}
[(\theta P, P), (\theta Q,Q)]_\ltimes 
= \big(  [\theta P, \theta Q]_\mathsf{NR}, ~ [P, Q]_\mathsf{C} + i_{\theta P} Q -(-1)^{mn}i_{\theta Q} P ).
\end{align*}
This is in $\mathrm{Gr} (\theta)$ if and only if 
\begin{align}\label{theta-mor}
\theta \big( [P, Q]_\mathsf{C} + i_{\theta P} Q -(-1)^{mn}i_{\theta Q} P\big) = [\theta P, \theta Q]_\mathsf{NR}.
\end{align}
We will now prove this identity. For this, we observe that
\begin{align*}
&\theta \big( [P, Q]_\mathsf{C} + i_{\theta P} Q -(-1)^{mn}i_{\theta Q} P\big)  \\
&= (-1)^n [\theta P, Q]_\mathsf{C} + [P, \theta Q]_\mathsf{C} + \theta ( i_{\theta P}Q ) - (-1)^{mn} \theta (i_{\theta Q}P ) \qquad (\text{by Proposition }\ref{tr-theta}) \\
& = i_{\theta P} \theta Q  - \cancel{\theta (i_{\theta P} Q)} - (-1)^{m(n+1)} (-1)^m i_{\theta Q}\theta P   + (-1)^{m(n+1)} (-1)^m \cancel{\theta (i_{\theta Q} P )}  \\
& \qquad  \qquad \qquad \qquad \qquad \qquad  + \cancel{\theta (i_{\theta P} Q )} - (-1)^{mn} \cancel{\theta (i_{\theta Q} P )}  \qquad (\text{by Lemma } \ref{lemma-lemma}) \\
&=i_{\theta P} \theta Q   - (-1)^{mn}i_{\theta Q} \theta P \\
&= [\theta P, \theta Q]_\mathsf{NR}.
\end{align*}
Hence we have proved the result.
\end{proof}

The above proposition suggests us to introduce the following definition.

\begin{definition}
Let $(\mathfrak{g}, [~,~], \alpha)$ be a Hom-Lie algebra. For each $m, n \geq 1$, we define a degree $0$ bilinear bracket
$[~,~]_{\mathsf{D}} : C^m_{\mathrm{Hom}} (\mathfrak{g}, \mathfrak{g}) \times C^n_{\mathrm{Hom}} (\mathfrak{g}, \mathfrak{g}) \rightarrow C^{m+n}_{\mathrm{Hom}} (\mathfrak{g}, \mathfrak{g})$ by
\begin{align}\label{ks-first}
[P,Q]_{\mathsf{D}} : = [P, Q ]_{\mathsf{C}} +i_{\theta P} Q - (-1)^{mn} i_{\theta Q} P,
\end{align}
for $P \in C^m_\mathrm{Hom} (\mathfrak{g}, \mathfrak{g})$ and $Q \in C^n_\mathrm{Hom} (\mathfrak{g}, \mathfrak{g})$. 
The bracket $[~,~]_{\mathsf{D}}$ is called the {\bf derived bracket}.
\end{definition}

From the definition of the cup product bracket and the explicit form of $\theta$ yields that
\begin{align}
    &[P, Q]_\mathsf{D}(x_1, x_2, \ldots , x_{m+n}) \\
    &~= \sum_{\sigma \in Sh (m,n)} (-1)^\sigma [\alpha^{n-1}P(x_{\sigma (1)}, \ldots , x_{\sigma (m)}), \alpha^{m-1} Q(x_{\sigma (m+1)},\ldots , x_{\sigma (m+n)})] \nonumber\\ \nonumber
    &~~- \sum_{\sigma \in Sh (m,1,n-1)} (-1)^\sigma Q\big([P(x_{\sigma (1)}, \ldots , x_{\sigma (m)}), \alpha^{m-1}x_{\sigma (m+1)}], \alpha^m x_{\sigma (m+2)},\ldots , \alpha^m x_{\sigma (m+n)}  \big)\\ 
    &~~+ (-1)^{mn} \sum_{\sigma \in Sh (n,1,m-1)} (-1)^\sigma P\big([Q(x_{\sigma (1)}, \ldots , x_{\sigma (n)}), \alpha^{n-1}x_{\sigma (n+1)}], \alpha^n x_{\sigma (n+2)},\ldots , \alpha^n x_{\sigma (m+n)}  \big), \nonumber
\end{align}
for $P \in C^m_\mathrm{Hom} (\mathfrak{g}, \mathfrak{g})$, $Q \in C^n_\mathrm{Hom} (\mathfrak{g}, \mathfrak{g})$ and $x_1, \ldots, x_{m+n} \in \mathfrak{g}$. This explicit description shows that the bracket $[~,~]_\mathsf{D}$ coincides with the derived bracket considered in \cite{mishra-naolekar}. However, our description of the bracket is useful to connect it with the Nijenhuis-Richardson graded Lie algebra. More precisely, we have the following result.

\begin{thm}\label{Kodai-thm}
Let $(\mathfrak{g}, [~,~], \alpha)$ be a Hom-Lie algebra.

(i) Then the derived bracket makes $(C^\bullet_{\mathrm{Hom}} (\mathfrak{g}, \mathfrak{g}), [~,~]_\mathsf{D})$ into a graded Lie algebra, called the derived algebra.

(ii) Moreover, the map $\theta : C^\bullet_{\mathrm{Hom}} (\mathfrak{g}, \mathfrak{g}) \rightarrow C^{\bullet + 1 }_{\mathrm{Hom}} (\mathfrak{g}, \mathfrak{g})$ is a morphism of graded Lie algebras from $(C^\bullet_{\mathrm{Hom}} (\mathfrak{g}, \mathfrak{g}), [~,~]_{\mathsf{D}} )$ to  $( C^\bullet_{\mathrm{Hom}} (\mathfrak{g}, \mathfrak{g}), [~,~]_{\mathsf{NR}} )$.
\end{thm}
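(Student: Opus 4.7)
The plan is to deduce both parts directly from Proposition \ref{ks-prop} by transferring structure along the graph embedding $P \mapsto (\theta P, P)$.

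For part (i), I would consider the degree $0$ injective linear map
\begin{align*}
\Xi : C^\bullet_{\mathrm{Hom}}(\mathfrak{g},\mathfrak{g}) \longrightarrow C^{\bullet+1}_{\mathrm{Hom}}(\mathfrak{g},\mathfrak{g}) \oplus C^\bullet_{\mathrm{Hom}}(\mathfrak{g},\mathfrak{g}), \qquad \Xi(P) := (\theta P, P),
\end{align*}
whose image is precisely $\mathrm{Gr}(\theta)$. The computation carried out inside the proof of Proposition \ref{ks-prop} already shows
\begin{align*}
[ \Xi(P), \Xi(Q) ]_\ltimes = \big( [\theta P, \theta Q]_\mathsf{NR},\ [P,Q]_\mathsf{D} \big),
\end{align*}
and identity (\ref{theta-mor}) exhibits the first component as $\theta([P,Q]_\mathsf{D})$. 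Hence
\begin{align*}
[ \Xi(P), \Xi(Q) ]_\ltimes = \Xi\big( [P,Q]_\mathsf{D} \big).
\end{align*}
Thus $\Xi$ is an isomorphism of graded vector spaces onto its image $\mathrm{Gr}(\theta)$ that intertwines $[~,~]_\mathsf{D}$ with $[~,~]_\ltimes$. Since $[~,~]_\ltimes$ restricted to $\mathrm{Gr}(\theta)$ is a graded Lie bracket (by Proposition \ref{ks-prop}), the graded skew-symmetry and the graded Jacobi identity for $[~,~]_\mathsf{D}$ are immediate consequences of injectivity of $\Xi$.

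For part (ii), the same identity (\ref{theta-mor}),
\begin{align*}
\theta\big([P,Q]_\mathsf{D}\big) = [\theta P, \theta Q]_\mathsf{NR},
\end{align*}
established inside the proof of Proposition \ref{ks-prop}, is precisely the statement that $\theta$ is a morphism of graded Lie algebras from $(C^\bullet_{\mathrm{Hom}}(\mathfrak{g},\mathfrak{g}),[~,~]_\mathsf{D})$ to $(C^{\bullet+1}_{\mathrm{Hom}}(\mathfrak{g},\mathfrak{g}),[~,~]_\mathsf{NR})$; this finishes part (ii).

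No new calculation is really needed: all the effort was absorbed into Proposition \ref{ks-prop}, whose proof in turn relied on the derivation-like property of $\theta$ (Proposition \ref{tr-theta}) and on Lemma \ref{lemma-lemma}. The only point that would require care is to verify that $\Xi$ has the right codomain, i.e.\ that $\theta P \in C^{m+1}_{\mathrm{Hom}}(\mathfrak{g},\mathfrak{g})$ whenever $P \in C^m_{\mathrm{Hom}}(\mathfrak{g},\mathfrak{g})$; this is immediate from the definition (\ref{theta}) of $\theta$ together with the multiplicativity of $\alpha$.
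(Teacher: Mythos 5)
Your proposal is correct and follows essentially the same route as the paper: the paper's proof also defines the graph embedding $\Phi = (\theta,\mathrm{id})$, invokes Proposition \ref{ks-prop} to see that it intertwines $[~,~]_\mathsf{D}$ with $[~,~]_\ltimes$, and deduces part (i) from injectivity and part (ii) from the identity (\ref{theta-mor}).
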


\begin{proof}
(i) Consider the injective map $\Phi := (\theta, \mathrm{id}) : C^\bullet_\mathrm{Hom} (\mathfrak{g}, \mathfrak{g}) \rightarrow C^{\bullet + 1 }_\mathrm{Hom} (\mathfrak{g}, \mathfrak{g}) \oplus C^\bullet_\mathrm{Hom} (\mathfrak{g}, \mathfrak{g})$ of graded vector spaces. It follows from Proposition \ref{ks-prop} that the map
\begin{align*}
\Phi :   ( C^\bullet_\mathrm{Hom} (\mathfrak{g}, \mathfrak{g}), [~,~]_\mathsf{D}) \rightarrow  ( C^{\bullet + 1 }_\mathrm{Hom} (\mathfrak{g}, \mathfrak{g}) \oplus C^\bullet_\mathrm{Hom} (\mathfrak{g}, \mathfrak{g}), [~,~]_\ltimes)
\end{align*}
preserves the corresponding brackets. Since $\Phi$ is injective, it follows that $( C^\bullet_\mathrm{Hom} (\mathfrak{g}, \mathfrak{g}), [~,~]_\mathsf{D})$ is a graded Lie algebra.

(ii) This part follows from the definition of the derived bracket and the identity (\ref{theta-mor}).
\end{proof}

\medskip

\noindent {\bf Rota-Baxter operators on Hom-Lie algebras.} 
Let $(\mathfrak{g}, [~,~], \alpha)$ be a Hom-Lie algebra. Given any fixed $\lambda \in {\bf k}$, we define a map $d_\lambda : C^\bullet_\mathrm{Hom} (\mathfrak{g}, \mathfrak{g}) \rightarrow C^{\bullet +1}_\mathrm{Hom} ( \mathfrak{g}, \mathfrak{g})$ by 
\begin{align*}
d_\lambda (f) := \lambda \delta_\mathrm{Hom}^\mathrm{tr} (f) = - \lambda i_{ \mu} f, \text{ for } f \in C^n_\mathrm{Hom} (\mathfrak{g}, \mathfrak{g}).
\end{align*}
Here $\delta_\mathrm{Hom}^\mathrm{tr}$ is the differential of the Hom-Lie algebra $(\mathfrak{g}, [~,~], \alpha)$ with coefficients in $\mathfrak{g}$ but with the trivial action.  In terms of $\delta_\mathrm{Hom}$ and the map $\theta$, we can write $d_\lambda (f) = \lambda \big( \delta_\mathrm{Hom} f + (-1)^{n-1} \theta f \big)$, for $f \in C^n_\mathrm{Hom} (\mathfrak{g}, \mathfrak{g})$. Explicitly,
\begin{align*}
    (d_\lambda f) (x_1, \ldots, x_{n+1} ) := \lambda \sum_{1 \leq i < j \leq n+1} (-1)^{i+j}  f ( [x_i, x_j] , \alpha (x_i), \ldots, \widehat{ \alpha (x_i)}, \ldots, \widehat{ \alpha (x_j)}, \ldots, \alpha (x_{n+1}) ),
\end{align*}
for $f \in C^n_\mathrm{Hom} (\mathfrak{g}, \mathfrak{g})$ and $x_1, \ldots, x_{n+1} \in \mathfrak{g}$. Then we have the following.

\begin{proposition}\label{prop-dgla}
    The differential map $d_\lambda$ is a graded derivation for the derived bracket. That is, $(C^\bullet_\mathrm{Hom}  (\mathfrak{g}, \mathfrak{g}), [~,~]_\mathsf{D}, d_\lambda )$ is a differential graded Lie algebra.
\end{proposition}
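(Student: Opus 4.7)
The goal is to verify that $d_\lambda^2 = 0$ and that $d_\lambda$ satisfies the graded Leibniz rule with respect to $[~,~]_\mathsf{D}$. The first assertion is immediate: $d_\lambda = \lambda\,\delta_\mathrm{Hom}^\mathrm{tr}$ and $\delta_\mathrm{Hom}^\mathrm{tr}$ is the differential of the Hom-Lie cochain complex with trivial coefficients, so $(\delta_\mathrm{Hom}^\mathrm{tr})^2 = 0$ gives $d_\lambda^2 = 0$ at once.

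For the Leibniz rule, I would exploit the decomposition $[P,Q]_\mathsf{D} = [P,Q]_\mathsf{C} + i_{\theta P}Q - (-1)^{mn} i_{\theta Q}P$. Since $\delta_\mathrm{Hom}^\mathrm{tr}$ is recorded as a graded derivation of $[~,~]_\mathsf{C}$ just before Proposition \ref{tr-theta}, so is $d_\lambda$. Hence what remains is to establish the ``contraction Leibniz rule''
\[
d_\lambda(i_{\theta P}Q) = i_{\theta(d_\lambda P)}Q + (-1)^m\, i_{\theta P}(d_\lambda Q),
\]
together with its $P\leftrightarrow Q$ counterpart; these combine (using the elementary sign identity $(-1)^{mn+n} = (-1)^{(m+1)n}$) to yield the full Leibniz rule $d_\lambda[P,Q]_\mathsf{D} = [d_\lambda P, Q]_\mathsf{D} + (-1)^m[P, d_\lambda Q]_\mathsf{D}$.

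To prove the contraction Leibniz rule, I would write $d_\lambda = -\lambda\,i_\mu$ and invoke the graded right pre-Lie identity of the contraction operator (the identity underlying the Nijenhuis-Richardson bracket and used, for instance, in the proofs of Lemma \ref{lemma-lemma} and Proposition \ref{first-semi-rep}), which gives
\[
i_\mu(i_{\theta P}Q) = (-1)^m i_{\theta P}(i_\mu Q) + i_{[\mu,\theta P]_\mathsf{NR}}Q.
\]
After multiplying by $-\lambda$ and substituting $-[\mu,-]_\mathsf{NR} = \delta_\mathrm{Hom}$, the desired identity reduces to the commutation relation $\delta_\mathrm{Hom}(\theta P) = \theta(\delta_\mathrm{Hom}^\mathrm{tr} P)$. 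I would establish this relation by expanding $0 = \delta_\mathrm{Hom}^2 P$ through the paper's decomposition $\delta_\mathrm{Hom} f = \delta_\mathrm{Hom}^\mathrm{tr} f + (-1)^n \theta f$ for $f \in C^n_\mathrm{Hom}(\mathfrak{g},\mathfrak{g})$ and cancelling the $(\delta_\mathrm{Hom}^\mathrm{tr})^2 P$-term; collecting the surviving pieces produces $\theta^2 P = (-1)^m\bigl(\delta_\mathrm{Hom}^\mathrm{tr}(\theta P) - \theta(\delta_\mathrm{Hom}^\mathrm{tr} P)\bigr)$, which, when substituted back into $\delta_\mathrm{Hom}(\theta P) = \delta_\mathrm{Hom}^\mathrm{tr}(\theta P) - (-1)^m\theta^2 P$, forces exactly $\delta_\mathrm{Hom}(\theta P) = \theta(\delta_\mathrm{Hom}^\mathrm{tr} P)$.

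The main obstacle I anticipate is the sign-bookkeeping: correctly tracking shifted degrees when invoking the pre-Lie identity, and then matching signs when combining the contraction Leibniz rule with its $P\leftrightarrow Q$ swap to reassemble the full Leibniz rule for $[~,~]_\mathsf{D}$. Once the commutation $\delta_\mathrm{Hom}\circ\theta = \theta\circ\delta_\mathrm{Hom}^\mathrm{tr}$ is in place, the remainder is a mechanical sign-check.
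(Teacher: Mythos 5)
Your proof is correct, and it reorganizes the argument along genuinely different lines from the paper's. The paper also starts by extracting from $(d_{\lambda=1})^2=0$ the identity $\theta^2 f = (-1)^n(\theta\circ\delta_\mathrm{Hom}-\delta_\mathrm{Hom}\circ\theta)(f)$ --- which, after substituting $\delta^{\mathrm{tr}}_\mathrm{Hom}=\delta_\mathrm{Hom}-(-1)^n\theta$, is equivalent to your commutation relation $\delta_\mathrm{Hom}\circ\theta=\theta\circ\delta^{\mathrm{tr}}_\mathrm{Hom}$ --- but it then writes $d_{\lambda=1}=\delta_\mathrm{Hom}+(-1)^{n-1}\theta$, invokes $\theta([P,Q]_\mathsf{D})=[\theta P,\theta Q]_\mathsf{NR}$ from Proposition \ref{ks-prop} and the derivation property of $\delta_\mathrm{Hom}$ for $[~,~]_\mathsf{C}$ (Proposition \ref{delta-hom-derivation}), and expands both sides of the Leibniz rule in full before cancelling the surplus terms against the $\theta^2$ identity. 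You instead keep $d_\lambda=-\lambda\, i_\mu$ intact, use that $\delta^{\mathrm{tr}}_\mathrm{Hom}$ is a cup-product derivation, and isolate the single contraction Leibniz rule $d_\lambda(i_{\theta P}Q)=i_{\theta(d_\lambda P)}Q+(-1)^m i_{\theta P}(d_\lambda Q)$, obtained from the representation identity $i_{[\mu,\theta P]_\mathsf{NR}}=i_\mu\, i_{\theta P}-(-1)^{m}\, i_{\theta P}\, i_\mu$ together with the commutation relation. This is more modular, keeps far fewer terms in play at any one time, and your commutation relation is a clean all-degrees generalization of Lemma \ref{new-lemma}(ii), which the paper records only for elements of $C^1_\mathrm{Hom}(\mathfrak{g},\mathfrak{g})$; the paper's route, by contrast, reuses only identities it has already displayed and avoids introducing a new intermediate lemma. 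The sign bookkeeping you flag does close up: the contraction Leibniz rule and its $P\leftrightarrow Q$ swap recombine via $-(-1)^{mn}(-1)^{n}=-(-1)^{(m+1)n}$ and $-(-1)^{m}(-1)^{m(n+1)}=-(-1)^{mn}$ to give exactly $d_\lambda[P,Q]_\mathsf{D}=[d_\lambda P,Q]_\mathsf{D}+(-1)^m[P,d_\lambda Q]_\mathsf{D}$.
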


\begin{proof}
The map $d_\lambda$ is a differential as $\delta_\mathrm{Hom}^\mathrm{tr}$ is so. Thus, we only need to show that $d_\lambda$ is a graded derivation for the bracket $[~,~]_\mathsf{D}$. When $\lambda = 0 $, we have $d_\lambda = 0$ and hence it is a graded derivation. Next, we suppose that $\lambda \neq 0$. Since $(d_\lambda)^2 =0$, we have
\begin{align*}
    (\delta_\mathrm{Hom} + (-1)^n \theta) (\delta_\mathrm{Hom} + (-1)^{n-1} \theta) (f) = 0, \text{ for } f \in C^n_\mathrm{Hom} (\mathfrak{g}, \mathfrak{g}).
\end{align*}
This in turn implies that $\big(  (-1)^{n-1} \delta_\mathrm{Hom} \circ \theta + (-1)^n \theta \circ \delta_\mathrm{Hom} - \theta^2 \big)(f) = 0$ or equivalently
\begin{align}\label{theta-2}
    \theta^2 (f) = (-1)^n ( \theta \circ \delta_\mathrm{Hom} - \delta_\mathrm{Hom} \circ \theta) (f), \text{ for } f \in C^n_\mathrm{Hom} (\mathfrak{g}, \mathfrak{g}).
\end{align}
Moreover, since $d_\lambda = \lambda d_{\lambda = 1}$, it is enough to prove the graded derivation property of $d_\lambda$ for $\lambda = 1$. For any $P \in C^m_\mathrm{Hom} (\mathfrak{g}, \mathfrak{g})$ and $Q \in C^n_\mathrm{Hom} (\mathfrak{g}, \mathfrak{g})$, we have
\begin{align}\label{d-lambda-1}
    &d_{\lambda = 1} ( [P, Q]_\mathsf{D}) \nonumber \\
    &= \delta_\mathrm{Hom} ([P, Q]_\mathsf{D}) +(-1)^{m+n-1} \theta ([P, Q]_\mathsf{D}) \nonumber \\
    &= \delta_\mathrm{Hom} \big(  [P, Q]_\mathsf{C} + i_{\theta P} Q - (-1)^{mn} i_{\theta Q} P \big) + (-1)^{m+n-1} [\theta P, \theta Q]_\mathsf{NR} \nonumber \\
    &= [\delta_\mathrm{Hom} P, Q]_\mathsf{C} + (-1)^m [P, \delta_\mathrm{Hom} Q]_\mathsf{C} + \delta_\mathrm{Hom} i_{\theta P} Q - (-1)^{mn} \delta_\mathrm{Hom} i_{\theta Q} P \\
    & \qquad \qquad \qquad \qquad \qquad + (-1)^{m+n-1} \big( i_{\theta P} \theta Q - (-1)^{mn} i_{\theta Q} \theta P   \big). \nonumber
\end{align}
On the other hand,
\begin{align}\label{d-lambda-2}
    &[d_{\lambda = 1} (P), Q]_\mathsf{D} + (-1)^m [P, d_{\lambda =1} (Q)]_\mathsf{D} \nonumber \\
    &= [\delta_\mathrm{Hom} P, Q]_\mathsf{D} + (-1)^{m-1} [\theta P, Q]_\mathsf{D} + (-1)^m [P, \delta_\mathrm{Hom} Q]_\mathsf{D} + (-1)^{m+n-1} [P, \theta Q]_\mathsf{D} \nonumber \\
   & = [ \delta_\mathrm{Hom} P, Q]_\mathsf{C} + i_{\theta \delta_\mathrm{Hom} P} Q - (-1)^{(m+1) n} i_{\theta Q} \delta_\mathrm{Hom} P \\
   & ~~+ (-1)^{m-1} \big\{  i_{\theta P} \delta_\mathrm{Hom} Q + (-1)^m i_{\delta_\mathrm{Hom} \theta P} Q + (-1)^{m+1} \delta_\mathrm{Hom} i_{\theta P} Q + i_{\theta^2 P} Q - (-1)^{(m+1) n} i_{\theta Q} \theta P   \big\} \nonumber \\
   &~~+ (-1)^m \big\{  [P, \delta_\mathrm{Hom} Q]_\mathsf{C} + i_{\theta P} \delta_\mathrm{Hom} Q - (-1)^{m(n+1)} i_{\theta \delta_\mathrm{Hom} Q} P   \big\} 
   + (-1)^{m+n-1} \big\{  - (-1)^{m (n+1)} i_{\theta Q} \delta_\mathrm{Hom} P  \nonumber \\
   & \qquad \qquad - (-1)^{mn + m+n } i_{\delta_\mathrm{Hom} \theta Q} P + (-1)^{mn+m+n} \delta_\mathrm{Hom} (i_{\theta Q} P) + i_{\theta P} \theta Q - (-1)^{m (n+1)} i_{\theta ^2 Q} P \big\}. \nonumber
\end{align}
Comparing expressions in (\ref{d-lambda-1}) and (\ref{d-lambda-2}), we get that
\begin{align*}
    [d_{\lambda = 1} (P), Q]_\mathsf{D} + (-1)^m [P, d_{\lambda =1} (Q)]_\mathsf{D} =~& d_{\lambda = 1} ([P, Q]_\mathsf{D}) + \underbrace{ i_{\theta \delta_\mathrm{Hom} P} Q - i_{\delta_\mathrm{Hom} \theta P} Q - (-1)^m i_{\theta^2 P}Q}  \\
    & \underbrace{ - (-1)^{mn} \big(    i_{\theta \delta_\mathrm{Hom} Q} P - i_{\delta_\mathrm{Hom} \theta Q} P - (-1)^n i_{\theta^2 Q}P  \big)}.
\end{align*}
It follows from (\ref{theta-2}) that both the above underlined expressions are zero. This proves that $d_{\lambda = 1}$ is a graded derivation and hence $d_\lambda = \lambda d_{\lambda =1}$ is so.
\end{proof}

Let $R: \mathfrak{g} \rightarrow \mathfrak{g}$ be a linear map satisfying $\alpha \circ R = R \circ \alpha$. That is, $R \in C^1_\mathrm{Hom} (\mathfrak{g}, \mathfrak{g})$. For any fixed $\lambda \in {\bf k}$, we define a bilinear bracket $[~,~]^R : \mathfrak{g} \times \mathfrak{g} \rightarrow \mathfrak{g}$ by
\begin{align}\label{rota-br}
[x,y]^R := [ Rx, y] + [x, Ry] + \lambda [x, y], ~ \text{ for } x, y \in \mathfrak{g}.
\end{align}
This bracket is clearly skew-symmetric and satisfies $\alpha ([x,y]^R) = [\alpha (x), \alpha (y)]^R$, for $x, y \in \mathfrak{g}$.

\begin{lemma}\label{new-lemma}
 For any $R \in C^1_\mathrm{Hom} (\mathfrak{g}, \mathfrak{g})$, we have   
 \begin{itemize}
 \item[(i)] $[\mu, R]_\mathsf{C} = i_{\theta R} \mu$,
     \item[(ii)] $\theta ( d_\lambda R) = - \lambda [\mu, \theta R ]_\mathsf{NR}$.
 \end{itemize}
\end{lemma}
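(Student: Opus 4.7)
The plan for both parts is to reduce the claims to identities in the algebra of operators $i$, $\theta$, and $\delta_{\mathrm{Hom}}$, exploiting the fact that $\mu$ is a Maurer-Cartan element of $(C^{\bullet+1}_{\mathrm{Hom}}(\mathfrak{g},\mathfrak{g}),[~,~]_{\mathsf{NR}})$; this is equivalent to $i_\mu\mu = 0$ and, by the definition of $\theta$, to $\theta\mu = 0$. These two consequences are the main simplifying inputs.

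For part (i), I would apply Lemma \ref{lemma-lemma} with $P = R$ and $Q = \mu$ (so $m = 1$, $n = 2$) to obtain $[R,\mu]_{\mathsf{C}} = i_R(\theta\mu) - \theta(i_R\mu)$. Combining $\theta\mu = 0$ with the identity $i_R\mu = -\theta R$ collapses this to $[R,\mu]_{\mathsf{C}} = \theta^2 R$. The graded skew-symmetry of the cup product then gives $[\mu,R]_{\mathsf{C}} = -\theta^2 R$ (the sign $(-1)^{mn}$ being $+1$ here), and a final application of the defining relation $\theta(\theta R) = -i_{\theta R}\mu$ delivers the desired formula $[\mu,R]_{\mathsf{C}} = i_{\theta R}\mu$.

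For part (ii), I would first rewrite $\theta(d_\lambda R) = \theta(-\lambda\,i_\mu R) = \lambda\,i_{i_\mu R}\mu$ directly from the definitions of $d_\lambda$ and $\theta$. Next, I would invoke the graded right pre-Lie identity for $P \odot Q = i_Q P$ (already used in the proof of Proposition \ref{first-semi-rep}) in the form
\[
i_{i_P Q}E - (-1)^{mn}\,i_{i_Q P}E = i_P i_Q E - (-1)^{mn}\,i_Q i_P E,
\]
specialized to $P = \mu$, $Q = R$, $E = \mu$, where the sign $(-1)^{mn}$ becomes $+1$ in the relevant shifted grading. Using $i_\mu\mu = 0$ and $i_R\mu = -\theta R$, this simplifies to
\[
i_{i_\mu R}\mu = -\,i_\mu\,\theta R - i_{\theta R}\mu.
\]
Since both $\mu$ and $\theta R$ lie in $C^2_{\mathrm{Hom}}(\mathfrak{g},\mathfrak{g})$, the right-hand side is precisely $-[\mu,\theta R]_{\mathsf{NR}}$ (the corresponding NR sign being $-1$), and multiplying by $\lambda$ yields the stated identity.

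I do not anticipate a conceptual obstacle: once the relations $\theta\mu = 0$, $i_R\mu = -\theta R$, and the pre-Lie identity are recognized, both parts reduce to bookkeeping. The main hazard is tracking the shift between the ``NR grading'' (in which $\mu$ has degree $1$) and the intrinsic cochain grading (in which $\mu$ has degree $2$), since the cancellations in (ii) are rather sensitive to this convention.
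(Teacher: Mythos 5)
Your proof is correct, but it follows a genuinely different route from the paper's on both parts. For (i), the paper simply expands $[\mu,R]_{\mathsf{C}}(x,y,z)$ and $(i_{\theta R}\mu)(x,y,z)$ pointwise and matches them using the Hom-Jacobi identity; you instead work entirely at the operator level, feeding the Hom-Jacobi identity in through the single relation $\theta\mu=-i_\mu\mu=0$ (note $[\mu,\mu]_{\mathsf{NR}}=2\,i_\mu\mu$, so this is indeed equivalent to Maurer--Cartan in characteristic $0$) and then invoking Lemma \ref{lemma-lemma} with $P=R$, $Q=\mu$ together with the graded skew-symmetry of $[~,~]_{\mathsf{C}}$. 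All the signs check out ($(-1)^n=+1$ for $n=2$, and $[\mu,R]_{\mathsf{C}}=-[R,\mu]_{\mathsf{C}}$), and as a bonus your argument exhibits the cleaner identity $[\mu,R]_{\mathsf{C}}=-\theta^2R$. For (ii), the paper derives the statement as a two-line consequence of Lemma \ref{lemma-lemma} (applied with $P=\mu$, $Q=R$) \emph{and} part (i); you bypass part (i) altogether by applying the graded right pre-Lie identity with $E=\mu$, which after inserting $i_\mu\mu=0$ and $i_R\mu=-\theta R$ gives $i_{i_\mu R}\mu=-i_\mu\theta R-i_{\theta R}\mu=-[\mu,\theta R]_{\mathsf{NR}}$ (the shifted sign $(-1)^{mn}$ is $+1$ since $R$ has shifted degree $0$, and the NR sign $(-1)^{(2-1)(2-1)}=-1$ makes the bracket a sum, exactly as you say). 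What your approach buys is uniformity --- everything reduces to the contraction calculus and the single input $i_\mu\mu=0$, with (ii) logically independent of (i); what the paper's approach buys is that (i) is verified from first principles without appealing to the pre-Lie identity, and (ii) then falls out almost immediately.
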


\begin{proof}
(i) For any $x,y,z \in \mathfrak{g}$,
\begin{align}\label{lemma-mr1}
    [\mu,R ]_\mathsf{C} (x,y,z) = [ [x,y],\alpha (Rz)]-[[x,z],\alpha R(y)]+ [[y,z],\alpha R (x)]
\end{align}
On the other hand, we first have $(\theta R)(x, y)= - [R x, y] - [x, Ry]$, for any $x, y \in \mathfrak{g}$. Hence
\begin{align}
    (i_{\theta R} \mu ) (x, y, z) =~& \mu \big(  (\theta R) (x, y), \alpha (z)  \big)  - \mu \big(  (\theta R) (x, z), \alpha (y)   \big)  + \mu \big( (\theta R) (y, z), \alpha (x) \big) \nonumber \\
    =~& - [[ Rx, y], \alpha (z)] - [[x, R(y)], \alpha (z) ] + [[ R(x), z], \alpha (y)] + [[ x, R(z)], \alpha (y)] \label{lemma-mr2} \\
    & \qquad \qquad - [ [Ry, z], \alpha (x)] - [ [ y, Rz], \alpha (x)]. \nonumber
\end{align}
The expressions in (\ref{lemma-mr1}) and (\ref{lemma-mr2}) are the same by using the Hom-Jacobi identity.

(ii) For this part, we observe that 
\begin{align*}
    \theta (d_\lambda R) + \lambda [\mu, \theta R]_\mathsf{NR} =~& - \lambda \theta (i_\mu R) + \lambda (i_\mu \theta R + i_{\theta R} \mu) \\
    =~& \lambda \big(  i_\mu \theta R - \theta (i_\mu R)  \big) + \lambda (i_{\theta R} \mu) \\
    =~& - \lambda [\mu, R]_\mathsf{C} + \lambda (i_{\theta R} \mu) \quad (\text{by Lemma } \ref{lemma-lemma})\\
    =~& 0 \quad (\text{by part }(\mathrm{i})).
\end{align*}
Hence the result follows.
\end{proof}

In the next result, we give a necessary and sufficient condition under which the bracket (\ref{rota-br}) defines a new Hom-Lie algebra structure on $\mathfrak{g}$.

\begin{proposition}\label{prop-dgla-rota}
Let $(\mathfrak{g}, [~,~], \alpha)$ be a Hom-Lie algebra and $R : \mathfrak{g} \rightarrow \mathfrak{g}$ be a linear map satisfying $\alpha \circ R = R \circ \alpha$.
Then the triple $(\mathfrak{g}, [~,~]^R, \alpha)$ is a Hom-Lie algebra if and only if 
\begin{align*}
\theta (d_\lambda R + \frac{1}{2} [R,R]_\mathsf{D} ) =0.
\end{align*}
\end{proposition}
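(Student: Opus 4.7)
The plan is to characterize the bracket $[~,~]^R$ by its associated cochain in $C^2_\mathrm{Hom}(\mathfrak{g},\mathfrak{g})$ and then translate the Hom-Jacobi identity into a Maurer-Cartan equation in the Nijenhuis-Richardson graded Lie algebra.

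First, let $\mu^R \in C^2_\mathrm{Hom}(\mathfrak{g},\mathfrak{g})$ be the element corresponding to the skew-symmetric bracket $[~,~]^R$. Using the explicit formula $(\theta R)(x,y) = -[Rx,y]-[x,Ry]$ computed from the defining identity of $\theta$, one immediately reads off that
\begin{align*}
\mu^R = \lambda\,\mu - \theta R.
\end{align*}
Since $[~,~]^R$ is already skew-symmetric and multiplicative (as $\alpha \circ R = R \circ \alpha$), the triple $(\mathfrak{g},[~,~]^R,\alpha)$ is a Hom-Lie algebra if and only if $\mu^R$ is a Maurer-Cartan element of $(C^{\bullet+1}_\mathrm{Hom}(\mathfrak{g},\mathfrak{g}),[~,~]_\mathsf{NR})$, i.e.\ $[\mu^R,\mu^R]_\mathsf{NR}=0$.

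Next, I would expand this bracket using graded bilinearity:
\begin{align*}
[\mu^R,\mu^R]_\mathsf{NR} = \lambda^2\,[\mu,\mu]_\mathsf{NR} - 2\lambda\,[\mu,\theta R]_\mathsf{NR} + [\theta R,\theta R]_\mathsf{NR}.
\end{align*}
The first term vanishes because $\mu$ is itself Maurer-Cartan for $[~,~]_\mathsf{NR}$ (this is what makes $\mathfrak{g}$ a Hom-Lie algebra). For the middle term, Lemma \ref{new-lemma}(ii) gives $\theta(d_\lambda R) = -\lambda\,[\mu,\theta R]_\mathsf{NR}$, so $-2\lambda[\mu,\theta R]_\mathsf{NR} = 2\,\theta(d_\lambda R)$. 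For the last term, Theorem \ref{Kodai-thm}(ii) asserts that $\theta$ is a morphism of graded Lie algebras from the derived algebra to the Nijenhuis-Richardson algebra, giving $[\theta R,\theta R]_\mathsf{NR} = \theta([R,R]_\mathsf{D})$. Putting these together,
\begin{align*}
[\mu^R,\mu^R]_\mathsf{NR} = 2\,\theta(d_\lambda R) + \theta([R,R]_\mathsf{D}) = 2\,\theta\!\left( d_\lambda R + \tfrac{1}{2}[R,R]_\mathsf{D} \right).
\end{align*}
Hence the Hom-Jacobi identity for $[~,~]^R$ is equivalent to the stated vanishing condition.

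I do not anticipate a serious obstacle: the heavy lifting has already been done by Lemma \ref{new-lemma}(ii) (which supplies the interaction between $\theta$, $d_\lambda$ and the bracket with $\mu$) and by Theorem \ref{Kodai-thm}(ii) (which turns the square of $\theta R$ in the Nijenhuis-Richardson bracket into $\theta$ applied to the derived square). The only verification that requires direct checking is the identity $\mu^R = \lambda\mu-\theta R$, which is a single two-line computation from the explicit formula for $\theta$ on a degree-one cochain.
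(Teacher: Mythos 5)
Your proposal is correct and follows essentially the same route as the paper: identify $\mu^R=\lambda\mu-\theta R$, reduce the Hom-Jacobi identity to $[\lambda\mu-\theta R,\lambda\mu-\theta R]_\mathsf{NR}=0$, and convert the two surviving terms via Lemma \ref{new-lemma}(ii) and the identity $\theta([R,R]_\mathsf{D})=[\theta R,\theta R]_\mathsf{NR}$. The only cosmetic difference is that you spell out the expansion and the vanishing of $\lambda^2[\mu,\mu]_\mathsf{NR}$ explicitly, which the paper leaves implicit.
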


\begin{proof}
For any $x, y \in \mathfrak{g}$, we first observe that
\begin{align*}
( \lambda \mu - \theta R ) (x, y) =  [Rx, y] + [x, Ry] + \lambda [x, y]=  [x, y]^R.
\end{align*}
Hence $[~,~]^R$ is a Hom-Lie bracket if and only if $[ \lambda \mu - \theta R, \lambda \mu - \theta R]_\mathsf{NR} = 0$. This is same as 
\begin{align*}
    - 2 \lambda [\mu, \theta R]_\mathsf{NR} + [\theta R, \theta R]_\mathsf{NR} = 0 \qquad \text{ equivalently, } \quad \theta \big( d_\lambda R + \frac{1}{2} [R,R]_\mathsf{D} \big) = 0.
\end{align*}
The last equivalence follows from Lemma \ref{new-lemma} (ii) and the fact that 
$\theta ([R, R]_\mathsf{D}) = [\theta R, \theta R]_\mathsf{NR}.$ This proves the result.
\end{proof}

\begin{definition}
Let $(\mathfrak{g},[~,~], \alpha)$ be a Hom-Lie algebra. A linear map $R : \mathfrak{g} \rightarrow \mathfrak{g}$ satisfying $\alpha \circ R = R \circ \alpha$ is said to be a {\bf Rota-Baxter operator of weight $\lambda \in {\bf k}$} on the Hom-Lie algebra if
\begin{align}\label{rb-identity}
[Rx, Ry] = R ( [Rx, y ] + [x, Ry] + \lambda [x, y]),~ \text{ for } x, y \in \mathfrak{g}.
\end{align}
\end{definition}

Note that the condition (\ref{rb-identity}) is equivalent to say that 
\begin{align*}
d_\lambda (R) + \frac{1}{2}[R, R]_\mathsf{D} = 0,
\end{align*}
i.e. $R$ is a Maurer-Cartan element in the differential graded Lie algebra $\big( C^\bullet_\mathrm{Hom} (\mathfrak{g}, \mathfrak{g}) , [~,~]_\mathsf{D}, d_\lambda \big)$. In particular, $R$ is a Rota-Baxter operator of weight $0$ if and only if it is a Maurer-Cartan element in the graded Lie algebra $\big( C^\bullet_\mathrm{Hom} (\mathfrak{g}, \mathfrak{g}) , [~,~]_\mathsf{D})$.

\begin{proposition}
Let $R$ be a Rota-Baxter operator on a Hom-Lie algebra $(\mathfrak{g}, [~,~], \alpha)$. Then $(\mathfrak{g}, [~,~]^R, \alpha)$ is a Hom-Lie algebra and $R : (\mathfrak{g}, [~,~]^R, \alpha) \rightarrow (\mathfrak{g}, [~,~], \alpha)$ is a morphism of Hom-Lie algebras.
\end{proposition}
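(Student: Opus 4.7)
The plan is to verify the two claims separately, each by reducing to a fact already established in the section.

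For the first claim, that $(\mathfrak{g}, [~,~]^R, \alpha)$ is a Hom-Lie algebra, I would simply invoke Proposition \ref{prop-dgla-rota}. Since $R$ is a Rota-Baxter operator of weight $\lambda$, it satisfies the Maurer-Cartan equation $d_\lambda R + \tfrac{1}{2}[R,R]_\mathsf{D} = 0$ in the differential graded Lie algebra $(C^\bullet_\mathrm{Hom}(\mathfrak{g},\mathfrak{g}), [~,~]_\mathsf{D}, d_\lambda)$. In particular, $\theta \bigl(d_\lambda R + \tfrac{1}{2}[R,R]_\mathsf{D}\bigr) = 0$, which by Proposition \ref{prop-dgla-rota} is exactly the condition for $[~,~]^R$ to define a Hom-Lie bracket. (The skew-symmetry of $[~,~]^R$ and the compatibility $\alpha([x,y]^R) = [\alpha(x),\alpha(y)]^R$ were already observed when the bracket (\ref{rota-br}) was introduced.)

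For the second claim, I need to verify that $R$ is a morphism of Hom-Lie algebras from $(\mathfrak{g}, [~,~]^R, \alpha)$ to $(\mathfrak{g}, [~,~], \alpha)$. Two conditions must be checked. The commutativity $\alpha \circ R = R \circ \alpha$ is built into the definition of a Rota-Baxter operator (indeed, it is what places $R$ in $C^1_\mathrm{Hom}(\mathfrak{g}, \mathfrak{g})$). The bracket-preservation condition $R([x,y]^R) = [R(x), R(y)]$ follows by a one-line computation: by the definition (\ref{rota-br}) of $[~,~]^R$,
\begin{align*}
R([x,y]^R) = R\bigl([Rx, y] + [x, Ry] + \lambda [x,y]\bigr),
\end{align*}
and the Rota-Baxter identity (\ref{rb-identity}) identifies this with $[Rx, Ry]$, as required.

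I do not anticipate any substantive obstacle; both parts are direct consequences of results proved earlier. The only conceptual point worth flagging is that the first part uses the Maurer-Cartan characterization (via Proposition \ref{prop-dgla-rota}) rather than a direct verification of the Hom-Jacobi identity for $[~,~]^R$, which would otherwise require a longer computation invoking the Hom-Jacobi identity of $[~,~]$ together with the Rota-Baxter relation.
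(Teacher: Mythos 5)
Your proof is correct and follows the same route as the paper: the paper's own proof simply cites Proposition \ref{prop-dgla-rota} for the first claim and the identity (\ref{rb-identity}) for the second. You have merely spelled out the details (applying $\theta$ to the Maurer-Cartan equation, and noting that $\alpha\circ R=R\circ\alpha$ is part of the definition), which is a faithful expansion of the paper's terse argument.
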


\begin{proof}
The first part follows from the previous proposition and the second part follows from the identity (\ref{rb-identity}).
\end{proof}

\medskip

\noindent {\bf Relative Rota-Baxter operators on Hom-Lie algebras.} Here we will generalize the graded Lie algebra and the differential graded Lie algebra constructed above to study relative Rota-Baxter operators.

Let $(\mathfrak{g}, [~,~]_\mathfrak{g}, \alpha)$ and $(\mathfrak{h}, [~,~]_\mathfrak{h}, \beta)$ be two Hom-Lie algebras. A {\em Hom-Lie algebra action} of $(\mathfrak{g}, [~,~]_\mathfrak{g}, \alpha)$ on $(\mathfrak{h}, [~,~]_\mathfrak{h}, \beta)$ is a bilinear map $\diamond : \mathfrak{g} \times \mathfrak{h} \rightarrow \mathfrak{h}$ that makes the triple $(\mathfrak{h}, \diamond, \beta)$ into a representation of the Hom-Lie algebra $(\mathfrak{g}, [~,~]_\mathfrak{g}, \alpha)$, i.e.
\begin{align*}
    \beta (x \diamond h) = \alpha (x) \diamond \beta (h) ~~~~ \text{ and } ~~~~ [x, y]_\mathfrak{g} \diamond \beta (h) = \alpha (x) \diamond (y \diamond h) - \alpha (y) \diamond (x \diamond h), \text{ for } x, y \in \mathfrak{g}, h \in \mathfrak{h},
\end{align*}
satisfying additionally
\begin{align*}
  \alpha (x) \diamond [h, k]_\mathfrak{h} = [ x \diamond h , \beta (k)]_\mathfrak{h} + [ \beta (h), x \diamond k]_\mathfrak{h}, \text{ for } x \in \mathfrak{g} \text{ and } h, k \in \mathfrak{h}.  
\end{align*}

It follows that any Hom-Lie algebra $(\mathfrak{g}, [~,~]_\mathfrak{g}, \alpha)$ has a Hom-Lie algebra action on itself with the action map $\diamond$ being given by the Hom-Lie bracket $[~,~]_\mathfrak{g}$. This is called the adjoint action. The proof of the following result is straightforward.

\begin{proposition}
    Let $(\mathfrak{g}, [~,~]_\mathfrak{g}, \alpha)$ and $(\mathfrak{h}, [~,~]_\mathfrak{h}, \beta)$ be two Hom-Lie algebras and $\diamond : \mathfrak{g} \times \mathfrak{h} \rightarrow \mathfrak{h}$ be a Hom-Lie algebra action. Then for any $\lambda \in {\bf k}$, the direct sum $\mathfrak{g} \oplus \mathfrak{h}$ carries a bilinear skew-symmetric bracket
    \begin{align*}
        [(x, h), (y, k)]_\ltimes^\lambda := ([x, y]_\mathfrak{g}, x \diamond k - y \diamond h + \lambda [h, k]_\mathfrak{h}), \text{ for } (x, h), (y, k) \in \mathfrak{g} \oplus \mathfrak{h}
    \end{align*}
    that makes $( \mathfrak{g} \oplus \mathfrak{h}, [~,~]_\ltimes^\lambda, \alpha \oplus \beta)$ into a Hom-Lie algebra, called the semidirect product of weight $\lambda$.
\end{proposition}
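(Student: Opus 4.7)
The plan is to verify three things in order: skew-symmetry of the bracket $[~,~]_\ltimes^\lambda$, multiplicativity of the twist $\alpha \oplus \beta$ with respect to this bracket, and the Hom-Jacobi identity. The first is immediate componentwise from the skew-symmetry of $[~,~]_\mathfrak{g}$, of $[~,~]_\mathfrak{h}$, and from the structure of the $\mathfrak{h}$-component $x \diamond k - y \diamond h + \lambda [h,k]_\mathfrak{h}$. Multiplicativity follows from the multiplicativity of $\alpha$ on $(\mathfrak{g}, [~,~]_\mathfrak{g})$ and of $\beta$ on $(\mathfrak{h}, [~,~]_\mathfrak{h})$ together with the compatibility $\beta(x \diamond h) = \alpha(x) \diamond \beta(h)$ built into the definition of a Hom-Lie algebra action.

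The main work is the Hom-Jacobi identity applied to triples $(x,h), (y,k), (z,l) \in \mathfrak{g} \oplus \mathfrak{h}$. I would expand the cyclic sum and split the computation into the two components. The $\mathfrak{g}$-component of the sum $[(\alpha \oplus \beta)(x,h), [(y,k),(z,l)]_\ltimes^\lambda]_\ltimes^\lambda + \text{cyclic}$ equals the cyclic sum $[\alpha(x),[y,z]_\mathfrak{g}]_\mathfrak{g} + \text{cyclic}$, which vanishes by the Hom-Jacobi identity for $\mathfrak{g}$.

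For the $\mathfrak{h}$-component the terms naturally sort into three groups, organized by how many factors of $\lambda$ appear:
\begin{itemize}
    \item Terms of weight $\lambda^0$, which consist of three instances of $\alpha(x) \diamond (y \diamond l) - \alpha(y) \diamond (x \diamond l) - [x,y]_\mathfrak{g} \diamond \beta(l)$ (with variables cyclically permuted), each of which vanishes because $(\mathfrak{h}, \diamond, \beta)$ is a representation of $(\mathfrak{g}, [~,~]_\mathfrak{g}, \alpha)$.
    \item Terms of weight $\lambda^1$, which mix the action $\diamond$ with the bracket $[~,~]_\mathfrak{h}$; here one rewrites each term of the form $\alpha(x) \diamond [k,l]_\mathfrak{h}$ using the action compatibility
    \[
    \alpha(x) \diamond [k,l]_\mathfrak{h} = [x \diamond k, \beta(l)]_\mathfrak{h} + [\beta(k), x \diamond l]_\mathfrak{h},
    \]
    after which all resulting terms cancel pairwise by the skew-symmetry of $[~,~]_\mathfrak{h}$.
    \item Terms of weight $\lambda^2$, which form exactly $\lambda^2$ times the cyclic sum $[\beta(h),[k,l]_\mathfrak{h}]_\mathfrak{h} + \text{cyclic}$, vanishing by the Hom-Jacobi identity for $\mathfrak{h}$.
\end{itemize}

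The potential obstacle is purely bookkeeping: making sure the twelve $\lambda$-linear terms match up in the right pairs under skew-symmetry. I would write the three cyclic contributions in a single display and line up the pairs $([\bullet, \beta(l)]_\mathfrak{h}, [\beta(l), \bullet]_\mathfrak{h})$, $([\beta(k), \bullet]_\mathfrak{h}, [\bullet, \beta(k)]_\mathfrak{h})$, and so on, after applying the action-compatibility rewrite. Once every piece is accounted for, Hom-Jacobi holds and $(\mathfrak{g} \oplus \mathfrak{h}, [~,~]_\ltimes^\lambda, \alpha \oplus \beta)$ is a Hom-Lie algebra.
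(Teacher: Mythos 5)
Your proposal is correct: the paper itself gives no proof of this proposition (it declares the result ``straightforward''), and your direct verification is exactly the intended argument. I checked the bookkeeping --- the $\lambda^0$ terms group into three instances of the representation identity, the twelve $\lambda^1$ terms cancel in pairs after applying the action-compatibility rewrite to each of $\alpha(x)\diamond[k,l]_\mathfrak{h}$, $\alpha(y)\diamond[l,h]_\mathfrak{h}$, $\alpha(z)\diamond[h,k]_\mathfrak{h}$, and the $\lambda^2$ terms form the Hom-Jacobi cyclic sum for $\mathfrak{h}$ --- so the argument goes through as you describe.
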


Let $(\mathfrak{g}, [~,~]_\mathfrak{g}, \alpha)$ and $(\mathfrak{h}, [~,~]_\mathfrak{h}, \beta)$ be two Hom-Lie algebras and $\diamond : \mathfrak{g} \times \mathfrak{h} \rightarrow \mathfrak{h}$ be a Hom-Lie algebra action. A linear map $R : \mathfrak{h} \rightarrow \mathfrak{g}$ is said to be a {\bf relative Rota-Baxter operator of weight $\lambda$} with respect to the Hom-Lie algebra action if $\alpha \circ R = R \circ \beta $ and
\begin{align*}
    [R(h), R(k)]_\mathfrak{g} = R \big( R(h) \diamond k - R(k) \diamond h + \lambda [h, k]_\mathfrak{h} \big), \text{ for } h, k \in \mathfrak{h}.
\end{align*}

It follows that a Rota-Baxter operator of weight $\lambda$ on a Hom-Lie algebra can be regarded as a relative Rota-Baxter operator of weight $\lambda$ with respect to the adjoint action. We have the following simple characterization of a relative Rota-Baxter operator of weight $\lambda$.

\begin{proposition}
    With the assumptions of the above proposition, a linear map $R: \mathfrak{h} \rightarrow \mathfrak{g}$ is a relative Rota-Baxter operator of weight $\lambda$ if and only if its graph $Gr (R) = \{ (R(h), h) | h \in \mathfrak{h} \}$ is a Hom-Lie subalgebra of $( \mathfrak{g} \oplus \mathfrak{h}, [~,~]_\ltimes^\lambda, \alpha \oplus \beta)$.
\end{proposition}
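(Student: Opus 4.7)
The plan is to directly verify both conditions required for a graded subspace to be a Hom-Lie subalgebra of $(\mathfrak{g} \oplus \mathfrak{h}, [~,~]_\ltimes^\lambda, \alpha \oplus \beta)$: closure under the twist map, and closure under the Hom-Lie bracket. Both conditions will turn out to correspond directly to the two defining conditions of a relative Rota-Baxter operator of weight $\lambda$.

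First I would verify the twist-map condition. An element of $Gr(R)$ has the form $(R(h), h)$ for some $h \in \mathfrak{h}$, and $(\alpha \oplus \beta)(R(h), h) = (\alpha R(h), \beta(h))$. This lies in $Gr(R)$ for every $h \in \mathfrak{h}$ if and only if $\alpha R(h) = R \beta(h)$ for all $h$, i.e., $\alpha \circ R = R \circ \beta$. This is precisely the first condition in the definition of a relative Rota-Baxter operator of weight $\lambda$.

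Next I would verify the closure under the bracket. For $h, k \in \mathfrak{h}$, using the formula for the weighted semidirect product bracket,
\begin{align*}
[(R(h), h), (R(k), k)]_\ltimes^\lambda = \big( [R(h), R(k)]_\mathfrak{g},~ R(h) \diamond k - R(k) \diamond h + \lambda [h, k]_\mathfrak{h} \big).
\end{align*}
This element lies in $Gr(R)$ if and only if the first component equals $R$ applied to the second component, i.e.,
\begin{align*}
[R(h), R(k)]_\mathfrak{g} = R \big( R(h) \diamond k - R(k) \diamond h + \lambda [h, k]_\mathfrak{h} \big), \text{ for all } h, k \in \mathfrak{h},
\end{align*}
which is exactly the relative Rota-Baxter identity of weight $\lambda$. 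Combining this equivalence with the twist-map equivalence from the previous paragraph establishes the proposition in both directions.

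Since the argument is a straightforward unpacking of the definitions, no substantive obstacle is anticipated; the only point requiring a little care is observing that the graph is automatically a vector subspace of $\mathfrak{g} \oplus \mathfrak{h}$ (linearity of $R$), so verifying closure under the twist map and the bracket suffices to conclude that $Gr(R)$ is indeed a Hom-Lie subalgebra.
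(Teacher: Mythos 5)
Your proof is correct, and it is the natural argument: the paper itself omits the proof of this proposition as a routine unpacking of definitions, which is precisely what you carry out. Your two observations (closure under $\alpha\oplus\beta$ is equivalent to $\alpha\circ R = R\circ\beta$, and closure under $[~,~]_\ltimes^\lambda$ is equivalent to the weight-$\lambda$ relative Rota-Baxter identity), together with the remark that $Gr(R)$ is automatically a subspace, fully establish the statement.
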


Since $Gr (R)$ is linearly isomorphic to the vector space $\mathfrak{h}$, the above proposition shows the existence of an induced Hom-Lie algebra structure on $\mathfrak{h}$. This induced Hom-Lie algebra structure is precisely given by $(\mathfrak{h}, [~,~]_\mathfrak{h}^R, \beta)$, where
\begin{align}\label{hom-induced}
    [h, k]_\mathfrak{h}^R := R(h) \diamond k - R(k) \diamond h + \lambda [h, k]_\mathfrak{h}, \text{ for } h, k \in \mathfrak{h}.
\end{align}

In the following, we construct a graded Lie algebra (resp. a differential graded Lie algebra) whose Maurer-Cartan elements are precisely relative Rota-Baxter operators of weight $0$ (resp. of nonzero weight $\lambda$). As before, we let $(\mathfrak{g}, [~,~]_\mathfrak{g}, \alpha)$ and $(\mathfrak{h}, [~,~]_\mathfrak{h}, \beta)$ be two Hom-Lie algebras and $\diamond : \mathfrak{g} \times \mathfrak{h} \rightarrow \mathfrak{h}$ be a Hom-Lie algebra action. We set 
\begin{align*}
C^n_\mathrm{Hom} (\mathfrak{h}, \mathfrak{g}) = \{ f : \wedge^n \mathfrak{h} \rightarrow \mathfrak{g} ~|~ \alpha \circ f = f \circ \beta^{\wedge^n} \}
\end{align*}
and take $C^\bullet_\mathrm{Hom} (\mathfrak{h}, \mathfrak{g}) = \bigoplus_n C^n_\mathrm{Hom} (\mathfrak{h}, \mathfrak{g}).$ Generalizing the contraction map (\ref{contraction-map}) and the map (\ref{theta}), we define maps 
\begin{align*}
    \widetilde{i} : C^m_\mathrm{Hom} (\mathfrak{h}, \mathfrak{h}) \times C^n_\mathrm{Hom} (\mathfrak{h}, \mathfrak{g}) \rightarrow C^{m+n-1}_\mathrm{Hom} (\mathfrak{h}, \mathfrak{g}) \quad \text{ and } \quad \widetilde{\theta} : C^n_\mathrm{Hom} (\mathfrak{h}, \mathfrak{g}) \rightarrow C^{n+1}_\mathrm{Hom} (\mathfrak{h}, \mathfrak{h})
\end{align*}
by
\begin{align*}
    (\widetilde{i}_f P) (h_1, \ldots, h_{m+n-1}) =~&  \sum_{\sigma \in Sh (m, n-1)} (-1)^\sigma P \big(  f (h_{\sigma (1)}, \ldots, h_{\sigma (m)} ), \beta^{m-1} h_{\sigma (m+1)}, \ldots, \beta^{m-1} h_{\sigma (m+n-1)}  \big),\\
    (\widetilde{\theta} P) (h_1, \ldots, h_{n+1})=~& \sum_{i=1}^{n+1} (-1)^{n+i}~ P (h_1, \ldots, \widehat{h_i}, \ldots, h_{n+1}) \diamond \beta^{n-1} (h_i),
\end{align*}
for $f \in C^m_\mathrm{Hom} (\mathfrak{h}, \mathfrak{h})$, $P \in C^n_\mathrm{Hom} (\mathfrak{h}, \mathfrak{g})$ and $h_1, \ldots, h_{m+n-1} \in \mathfrak{h}$. Note that, we also have the cup product bracket $[~,~]_\mathsf{C} : C^m_\mathrm{Hom} (\mathfrak{h}, \mathfrak{g}) \times C^n_\mathrm{Hom} (\mathfrak{h}, \mathfrak{g}) \rightarrow C^{m+n}_\mathrm{Hom} (\mathfrak{h}, \mathfrak{g})$ given by (\ref{cup-form}). With the above notations, we define a bracket (also called the {\bf derived bracket}) $[~,~]_\mathsf{D}^{\sim}: C^m_\mathrm{Hom} (\mathfrak{h}, \mathfrak{g}) \times C^n_\mathrm{Hom} (\mathfrak{h}, \mathfrak{g}) \rightarrow C^{m+n}_\mathrm{Hom} (\mathfrak{h}, \mathfrak{g}) $ by
\begin{align*}
    [P, Q]_\mathsf{D}^\sim := [P, Q]_\mathsf{C} + \widetilde{i}_{ \widetilde{\theta} P} Q - (-1)^{mn}~ \widetilde{i}_{ \widetilde{\theta} Q} P,
\end{align*}
for $P \in C^m_\mathrm{Hom} (\mathfrak{h}, \mathfrak{g})$ and $Q \in C^n_\mathrm{Hom} (\mathfrak{h}, \mathfrak{g})$. Explicitly, we have
\begin{align*}
     &[P, Q]_\mathsf{D}^\sim (h_1, \ldots, h_{m+n}) \\
     &~= \sum_{\sigma \in Sh (m,n)} (-1)^\sigma [\alpha^{n-1}P(h_{\sigma (1)}, \ldots , h_{\sigma (m)}), \alpha^{m-1} Q(h_{\sigma (m+1)},\ldots , _{\sigma (m+n)})]_\mathfrak{g} \\ 
    &~~- \sum_{\sigma \in Sh (m,1,n-1)} (-1)^\sigma Q\big( P(h_{\sigma (1)}, \ldots , h_{\sigma (m)}) \diamond \beta^{m-1}h_{\sigma (m+1)} , \beta^m h_{\sigma (m+2)},\ldots , \beta^m h_{\sigma (m+n)}  \big) \\ 
    &~~+ (-1)^{mn} \sum_{\sigma \in Sh (n,1,m-1)} (-1)^\sigma P\big( Q(h_{\sigma (1)}, \ldots , h_{\sigma (n)}) \diamond \beta^{n-1}h_{\sigma (n+1)}, \beta^n h_{\sigma (n+2)},\ldots , \beta^n h_{\sigma (m+n)}  \big),
\end{align*}
for $h_1, \ldots, h_{m+n} \in \mathfrak{h}$. Observe that, to define the bracket $[~,~]_\mathsf{D}^\sim$, we don't require the Hom-Lie bracket of $\mathfrak{h}$. Therefore, the bracket $[~,~]_\mathsf{D}^\sim$ is still defined when $(\mathfrak{h}, \diamond , \beta)$ is only a representation of the Hom-Lie algebra $(\mathfrak{g}, [~,~]_\mathfrak{g}, \alpha)$. By using the Hom-Lie bracket of $\mathfrak{h}$, we now define a map $\widetilde{d}_\lambda : C^n_\mathrm{Hom} (\mathfrak{h}, \mathfrak{g}) \rightarrow C^{n+1}_\mathrm{Hom} (\mathfrak{h}, \mathfrak{g})$ by
\begin{align*}
   ( \widetilde{d}_\lambda f) (h_1, \ldots, h_{n+1}) := \lambda \sum_{1 \leq i < j \leq n+1} (-1)^{i+j} f \big(  [h_i, h_j]_\mathfrak{h}, \beta (h_1), \ldots, \widehat{ \beta (h_i)}, \ldots, \widehat{ \beta (h_j)} , \ldots, \beta (h_{n+1})  \big),
\end{align*}
for $f \in C^n_\mathrm{Hom} (\mathfrak{h}, \mathfrak{g})$ and $h_1, \ldots, h_{n+1} \in \mathfrak{h}$. Then we have $(\widetilde{d}_\lambda)^2 = 0$. Moreover, we have the following result (as a generalization of Propositions \ref{prop-dgla} and \ref{prop-dgla-rota}).

\begin{thm}
    Let $(\mathfrak{g}, [~,~]_\mathfrak{g}, \alpha)$ and $(\mathfrak{h}, [~,~]_\mathfrak{h}, \beta)$ be two Hom-Lie algebras and $\diamond : \mathfrak{g} \times \mathfrak{h} \rightarrow \mathfrak{h}$ be a Hom-Lie algebra action.

    (i) Then for any $\lambda \in {\bf k}$, the triple $\big(  C^\bullet_\mathrm{Hom} (\mathfrak{h}, \mathfrak{g}), [~, ~]_\mathsf{D}^\sim,  \widetilde{d}_\lambda \big)$ is a differential graded Lie algebra.

    (ii) An element $R \in C^1_\mathrm{Hom} (\mathfrak{h}, \mathfrak{g})$ is a Maurer-Cartan element of the above differential graded Lie algebra if and only if $R: \mathfrak{h} \rightarrow \mathfrak{g}$ is a relative Rota-Baxter operator of weight $\lambda$ with respect to the Hom-Lie algebra action $\diamond$.
\end{thm}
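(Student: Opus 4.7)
The plan is to establish the differential graded Lie algebra structure claimed in part (i) by a combination of an embedding argument and direct verification, and then to obtain part (ii) by a short explicit computation in arity two.

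For the graded Lie algebra structure of $[~,~]_\mathsf{D}^\sim$, I exploit the fact that the bracket only uses the action $\diamond$ (and not the Hom-Lie bracket of $\mathfrak{h}$) by passing to the weight-$0$ semidirect product Hom-Lie algebra $(\mathfrak{g} \oplus \mathfrak{h}, [~,~]_\ltimes^0, \alpha \oplus \beta)$. Define the injective map of graded vector spaces
\begin{align*}
\iota : C^\bullet_\mathrm{Hom}(\mathfrak{h}, \mathfrak{g}) \hookrightarrow C^\bullet_\mathrm{Hom}(\mathfrak{g} \oplus \mathfrak{h}, \mathfrak{g} \oplus \mathfrak{h}), \qquad \iota(P)\big((x_1, h_1), \ldots, (x_n, h_n)\big) := \big(P(h_1, \ldots, h_n), 0\big).
\end{align*}
Since $\iota(P)$ reads only the $\mathfrak{h}$-components of its inputs and produces an element whose $\mathfrak{h}$-component is zero, a direct shuffle-bookkeeping calculation shows that the absolute cup product satisfies $[\iota(P), \iota(Q)]_\mathsf{C} = \iota([P, Q]_\mathsf{C}^\sim)$. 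Moreover, although $\theta(\iota P)$ is not itself in the image of $\iota$, its $\mathfrak{h}$-component coincides with $\widetilde{\theta} P$, and this is all that survives when it is contracted into $\iota Q$ (which again only reads $\mathfrak{h}$-components). Hence $i_{\theta(\iota P)} \iota Q = \iota(\widetilde{i}_{\widetilde{\theta} P} Q)$, and combining all three terms yields the key identity
\begin{align*}
[\iota(P), \iota(Q)]_\mathsf{D} = \iota([P, Q]_\mathsf{D}^\sim).
\end{align*}
Therefore $\iota$ is an injective graded linear intertwiner, and the graded skew-symmetry and graded Jacobi identity of $[~,~]_\mathsf{D}^\sim$ are inherited from Theorem \ref{Kodai-thm}(i).

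The identity $(\widetilde{d}_\lambda)^2 = 0$ is immediate: $\widetilde{d}_\lambda$ is $\lambda$ times the differential of the Hom-Lie algebra $(\mathfrak{h}, [~,~]_\mathfrak{h}, \beta)$ with coefficients in $\mathfrak{g}$ viewed as a trivial $\mathfrak{h}$-representation, whose square vanishes by the Hom-Jacobi identity of $\mathfrak{h}$. To show that $\widetilde{d}_\lambda$ is a graded derivation for $[~,~]_\mathsf{D}^\sim$, it suffices by linearity to treat $\lambda = 1$. One then splits the equation $\widetilde{d}_1[P, Q]_\mathsf{D}^\sim = [\widetilde{d}_1 P, Q]_\mathsf{D}^\sim + (-1)^m[P, \widetilde{d}_1 Q]_\mathsf{D}^\sim$ along the decomposition $[P, Q]_\mathsf{D}^\sim = [P, Q]_\mathsf{C}^\sim + \widetilde{i}_{\widetilde{\theta} P} Q - (-1)^{mn} \widetilde{i}_{\widetilde{\theta} Q} P$. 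That $\widetilde{d}_1$ is a graded derivation for the relative cup product $[~,~]_\mathsf{C}^\sim$ is the relative analogue of the absolute statement that $\delta_\mathrm{Hom}^\mathrm{tr}$ is a derivation for $[~,~]_\mathsf{C}$, and is proved by the same shuffle identity. The remaining mixed identities, expressing how $\widetilde{d}_1$ interacts with $\widetilde{\theta}$ and $\widetilde{i}$, are verified by direct computation, using crucially the Hom-Lie algebra action axiom $\alpha(x) \diamond [h, k]_\mathfrak{h} = [x \diamond h, \beta(k)]_\mathfrak{h} + [\beta(h), x \diamond k]_\mathfrak{h}$.

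For part (ii), taking $m = n = 1$ so that $Sh(1, 1) = \{\mathrm{id}, (1\, 2)\}$ and $Sh(1, 1, 0) = Sh(1, 1)$, a direct unpacking of the defining formulas gives
\begin{align*}
\tfrac{1}{2}[R, R]_\mathsf{D}^\sim(h_1, h_2) &= [R(h_1), R(h_2)]_\mathfrak{g} - R\big(R(h_1) \diamond h_2 - R(h_2) \diamond h_1\big), \\
\widetilde{d}_\lambda R(h_1, h_2) &= -\lambda R([h_1, h_2]_\mathfrak{h}),
\end{align*}
so the Maurer-Cartan equation $\widetilde{d}_\lambda R + \tfrac{1}{2}[R, R]_\mathsf{D}^\sim = 0$ is equivalent to
\begin{align*}
[R(h_1), R(h_2)]_\mathfrak{g} = R\big(R(h_1) \diamond h_2 - R(h_2) \diamond h_1 + \lambda[h_1, h_2]_\mathfrak{h}\big), \qquad \forall\, h_1, h_2 \in \mathfrak{h},
\end{align*}
which is precisely the defining identity of a relative Rota-Baxter operator of weight $\lambda$.

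The main obstacle is verifying that $\widetilde{d}_\lambda$ is a graded derivation for $[~,~]_\mathsf{D}^\sim$. Because $\widetilde{\theta}$ crosses between the two different complexes $C^\bullet_\mathrm{Hom}(\mathfrak{h}, \mathfrak{g})$ and $C^{\bullet+1}_\mathrm{Hom}(\mathfrak{h}, \mathfrak{h})$, one cannot directly port the absolute identity $\theta^2(f) = (-1)^n(\theta \circ \delta_\mathrm{Hom} - \delta_\mathrm{Hom} \circ \theta)(f)$ used in the proof of Proposition \ref{prop-dgla}; the analogous mixed identities must instead be derived by hand, and the action compatibility on $\mathfrak{h}$ is the precise ingredient that makes them close up.
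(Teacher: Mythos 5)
The paper states this theorem without proof (it is introduced only as ``a generalization of Propositions \ref{prop-dgla} and \ref{prop-dgla-rota}''), so your attempt has to be measured against the absolute-case arguments it is supposed to generalize. Two of your three pieces are correct and complete. Part (ii) is exactly right: with $m=n=1$ the two shuffles produce the factor of $2$, and the Maurer--Cartan equation unpacks to the defining identity of a weight-$\lambda$ relative Rota--Baxter operator. Your embedding argument for the graded Lie bracket is also sound and is an efficient way to recycle Theorem \ref{Kodai-thm}: since $\iota(P)$ has vanishing $\mathfrak{h}$-component and reads only $\mathfrak{h}$-inputs, the $\mathfrak{h}$-component of $\theta(\iota P)$ is $\widetilde{\theta}P$ and is the only part seen by $i_{\theta(\iota P)}(\iota Q)$, so indeed $[\iota P,\iota Q]_\mathsf{D}=\iota([P,Q]_\mathsf{D}^\sim)$. (You should add the one-line check that the weight-$0$ semidirect product is multiplicative, since Theorem \ref{Kodai-thm} is only available for multiplicative Hom-Lie algebras.)

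The genuine gap is the derivation property of $\widetilde{d}_\lambda$, which you correctly flag as the main obstacle and then dispose of with ``verified by direct computation.'' This is the only nontrivial analytic step in part (i), and it cannot be inherited from your embedding: the image of $\iota$ is \emph{not} closed under the ambient differential $d_\lambda=-\lambda\, i_{\mu_\lambda}$ of the weight-$\lambda$ semidirect product, because $i_{\mu_\lambda}(\iota f)$ feeds $f$ the terms $x_i\diamond h_j$ and hence genuinely depends on the $\mathfrak{g}$-inputs; so Proposition \ref{prop-dgla} does not transport along $\iota$, and the relative statement must be established from scratch. Concretely, the substitutes for the identity $\theta^2(f)=(-1)^n(\theta\circ\delta_\mathrm{Hom}-\delta_\mathrm{Hom}\circ\theta)(f)$ that drives the proof of Proposition \ref{prop-dgla} are never written down: one needs (1) that $\widetilde{d}_1$ is a graded derivation of the relative cup product (a pure shuffle computation, analogous to the statement for $D$ in Section \ref{sec3}), and (2) a precise commutation identity between $\widetilde{\theta}$, $\widetilde{i}$ and $\widetilde{d}_1$, which is where the compatibility $\alpha(x)\diamond[h,k]_\mathfrak{h}=[x\diamond h,\beta(k)]_\mathfrak{h}+[\beta(h),x\diamond k]_\mathfrak{h}$ must enter. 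Naming the ingredient is not the same as closing the identity; until those relations are stated and checked, part (i) remains a plan rather than a proof. If you want to avoid the computation altogether, a cleaner route is to run Voronov's derived-bracket construction on the Nijenhuis--Richardson algebra of the weight-$\lambda$ semidirect product with the abelian subalgebra $\iota\big(C^\bullet_\mathrm{Hom}(\mathfrak{h},\mathfrak{g})\big)$, which produces $[~,~]_\mathsf{D}^\sim$ and $\widetilde{d}_\lambda$ simultaneously together with the full differential graded Lie algebra axioms; this is essentially the route of \cite{mishra-naolekar} and \cite{das-weighted}.
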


The above theorem shows that a relative Rota-Baxter operator of weight $\lambda$ can be seen as a Maurer-Cartan element in a suitable differential graded Lie algebra. Using this characterization, one can define the cohomology (and deformations) of a relative Rota-Baxter operator of weight $\lambda$. Let $R: \mathfrak{h} \rightarrow \mathfrak{g}$ be a relative Rota-Baxter operator of weight $\lambda$ with respect to a given Hom-Lie algebra action $\diamond: \mathfrak{g} \times \mathfrak{h} \rightarrow \mathfrak{h}$. We define a map $D_R : C^n_\mathrm{Hom} (\mathfrak{h}, \mathfrak{g}) \rightarrow  C^{n+1}_\mathrm{Hom} (\mathfrak{h}, \mathfrak{g})$, for $n \geq 0$, by
\begin{align*}
    D_R (f) = \widetilde{d}_\lambda (f) + [R, f]_\mathsf{D}^\sim, \text{ for } f \in C^n_\mathrm{Hom} (\mathfrak{h}, \mathfrak{g}).
\end{align*}
Then we have $(D_R)^2 = 0$ and hence $\{ C^\bullet_\mathrm{Hom} (\mathfrak{h}, \mathfrak{g}), D_R \}$ is a cochain complex. The corresponding cohomology groups are called the {\em cohomology groups} of the operator $R$.

\begin{remark}
    When $\mathfrak{g}$ and $\mathfrak{h}$ are both Lie algebras (i.e. $\alpha = \mathrm{id}_\mathfrak{g}$ and $\beta = \mathrm{id}_\mathfrak{h}$), one recover the cohomology of a relative Rota-Baxter operator of weight $\lambda$ with respect to a Lie algebra action as defined in \cite{das-weighted}.
\end{remark}

In the following, we observe that the cohomology of a relative Rota-Baxter operator of weight $\lambda$ can be viewed as the cohomology of the induced Hom-Lie algebra with coefficients in a suitable representation. When $\lambda = 0$, a similar result has been proved in \cite{mishra-naolekar}.

\begin{proposition}
    Let $R : \mathfrak{h} \rightarrow \mathfrak{g}$ be a relative Rota-Baxter operator of weight $\lambda$ with respect to a given Hom-Lie algebra action $\diamond : \mathfrak{g} \times \mathfrak{h} \rightarrow \mathfrak{h}$. We define a bilinear map
    \begin{align*}
        \widetilde{\diamond} : \mathfrak{h} \times \mathfrak{g} \rightarrow \mathfrak{g} ~~ \text{ by }~ h ~ \! \widetilde{ \diamond }~ \! x := [R(h), x]_\mathfrak{g} + R (x \diamond h), \text{ for } h \in \mathfrak{h} \text{ and }  x \in \mathfrak{g}.
    \end{align*}
    Then $(\mathfrak{g}, \widetilde{\diamond}, \alpha)$ is a representation of the induced Hom-Lie algebra $(\mathfrak{h}, [~, ~]_\mathfrak{h}^R, \beta)$. Moreover, the corresponding cohomology groups of the induced Hom-Lie algebra are isomorphic to the cohomology groups of the operator $R$.
\end{proposition}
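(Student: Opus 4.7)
The plan is to verify the two claims in turn: first that $(\mathfrak{g}, \widetilde\diamond, \alpha)$ is a representation of the induced Hom-Lie algebra $(\mathfrak{h}, [~,~]^R_\mathfrak{h}, \beta)$, and second that the differential $D_R$ agrees on the nose with the Chevalley-Eilenberg differential $\delta^R_\mathrm{Hom}$ of $(\mathfrak{h}, [~,~]^R_\mathfrak{h}, \beta)$ with coefficients in $(\mathfrak{g}, \widetilde\diamond, \alpha)$. Since both complexes live on the same underlying graded space $C^\bullet_\mathrm{Hom}(\mathfrak{h}, \mathfrak{g})$, the equality $D_R = \delta^R_\mathrm{Hom}$ will give the required isomorphism of cohomology groups for free.

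For the representation check, the compatibility $\alpha(h \, \widetilde\diamond \, x) = \beta(h) \, \widetilde\diamond \, \alpha(x)$ follows at once from $\alpha \circ R = R \circ \beta$, the multiplicativity of $\alpha$, and the action axiom $\beta(x \diamond h) = \alpha(x) \diamond \beta(h)$. The main identity requires the key observation that, by the relative Rota-Baxter identity applied to (\ref{hom-induced}), we have $R([h, k]^R_\mathfrak{h}) = [R(h), R(k)]_\mathfrak{g}$. Expanding both sides of
\[
    [h, k]^R_\mathfrak{h} \, \widetilde\diamond \, \alpha(x) = \beta(h) \, \widetilde\diamond \, (k \, \widetilde\diamond \, x) - \beta(k) \, \widetilde\diamond \, (h \, \widetilde\diamond \, x)
\]
produces three families of terms: iterated brackets in $\mathfrak{g}$, summands of the form $R(\cdot \diamond \cdot)$, and doubly-$R$ summands. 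The iterated-bracket terms match via the Hom-Jacobi identity of $[~,~]_\mathfrak{g}$; the $R(\cdot \diamond \cdot)$ summands equate by using the representation axiom for $\diamond$ together with the compatibility $\alpha(x) \diamond [h, k]_\mathfrak{h} = [x \diamond h, \beta(k)]_\mathfrak{h} + [\beta(h), x \diamond k]_\mathfrak{h}$; and the doubly-$R$ contributions cancel after a second application of the relative Rota-Baxter identity to pairs such as $[R\beta(h), R(x \diamond k)]_\mathfrak{g}$.

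For the cochain complex identification I would write out $\delta^R_\mathrm{Hom} f$ explicitly and substitute the definitions of $\widetilde\diamond$ and $[~,~]^R_\mathfrak{h}$. This naturally decomposes $\delta^R_\mathrm{Hom} f$ into four pieces: (i) $\sum_i (-1)^{i+1} [R\beta^{n-1}(h_i), f(\ldots)]_\mathfrak{g}$, (ii) $\sum_i (-1)^{i+1} R(f(\ldots) \diamond \beta^{n-1}(h_i))$, (iii) $\sum_{i<j} (-1)^{i+j} f(R(h_i) \diamond h_j - R(h_j) \diamond h_i, \beta(h_1), \ldots)$, and (iv) $\lambda \sum_{i<j} (-1)^{i+j} f([h_i, h_j]_\mathfrak{h}, \beta(h_1), \ldots)$. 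On the other side, the decomposition $D_R f = [R, f]_\mathsf{C} + \widetilde i_{\widetilde\theta R} f - (-1)^n \widetilde i_{\widetilde\theta f} R + \widetilde d_\lambda f$ yields exactly these four pieces: $[R, f]_\mathsf{C}$ gives (i) since $Sh(1, n)$ is parametrised by positions $i \in \{1, \ldots, n+1\}$ with sign $(-1)^{i-1}$; the singleton set $Sh(n+1, 0)$ turns $-(-1)^n \widetilde i_{\widetilde\theta f} R$ into (ii); the set $Sh(2, n-1)$, which is in bijection with ordered pairs $i < j$ carrying sign $(-1)^{i+j-1}$, combines with the antisymmetry $\widetilde\theta R(h_i, h_j) = R(h_j) \diamond h_i - R(h_i) \diamond h_j$ to produce (iii); and $\widetilde d_\lambda f$ is (iv) by its very definition.

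The main obstacle is the combinatorial sign and shuffle bookkeeping in this last step, especially the identification of $\widetilde i_{\widetilde\theta R} f$ with piece (iii), where the sign of a $(2, n-1)$-shuffle must be tracked carefully and combined with the antisymmetry encoded in $\widetilde\theta R$; the remaining matches are essentially just an unwinding of definitions. Once $D_R = \delta^R_\mathrm{Hom}$ is established, the isomorphism of cohomology groups follows immediately.
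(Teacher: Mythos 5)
Your proposal is correct and takes essentially the same route as the paper: the cohomology isomorphism is obtained by showing $\delta^R_{\mathrm{Hom}} = D_R$ on the common underlying complex $C^\bullet_{\mathrm{Hom}}(\mathfrak{h},\mathfrak{g})$, exactly as in the paper, and your shuffle/sign bookkeeping (the $Sh(1,n)$, $Sh(n+1,0)$ and $Sh(2,n-1)$ identifications, and the formula $(\widetilde{\theta}R)(h_i,h_j)=R(h_j)\diamond h_i - R(h_i)\diamond h_j$) checks out. The only difference is that the paper dispenses with the representation axioms by citing a generalization of \cite[Proposition 5.8]{das-weighted}, whereas you verify them directly via $R([h,k]^R_\mathfrak{h})=[R(h),R(k)]_\mathfrak{g}$, the Hom-Jacobi identity, and the action compatibility; that verification is sound and self-contained.
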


\begin{proof}
    The first part is a generalization of \cite[Proposition 5.8]{das-weighted} in the Hom-Lie algebra context. Hence we will not repeat it here.

    For the second part, we let $\delta_\mathrm{Hom}^R : C^n_\mathrm{Hom} (\mathfrak{h}, \mathfrak{g}) \rightarrow  C^{n+1}_\mathrm{Hom} (\mathfrak{h}, \mathfrak{g})$ be the differential of the induced Hom-Lie algebra $(\mathfrak{h}, [~,~]_\mathfrak{h}^R, \beta)$ with coefficients in the representation $(\mathfrak{g}, \widetilde{ \diamond }, \alpha)$. Then we have
    \begin{align*}
        (\delta_\mathrm{Hom}^R f) (h_1, \ldots, h_{n+1}) =~& \sum_{i=1}^{n+1} (-1)^{i+1}~ \beta^{n-1} (h_i) ~\! \widetilde{ \diamond } ~ \! f(h_1, \ldots, \widehat{h_i}, \ldots, h_{n+1})\\
       & + \sum_{1 \leq i < j \leq n+1} (-1)^{i+j} ~ \! f ([h_i, h_j]_\mathfrak{h}^R, \beta (h_1), \ldots,\widehat{ \beta (h_i)}, \ldots, \widehat{ \beta (h_j)}, \ldots, \beta (h_{n+1}))\\
        =~& \widetilde{d}_\lambda (f) + [R, f]_\mathsf{D}^\sim = D_R (f),
    \end{align*}
for $f \in C^n_\mathrm{Hom} (\mathfrak{h}, \mathfrak{g})$ and $h_1, \ldots, h_{n+1} \in \mathfrak{h}$. Hence the result follows.
\end{proof}



\noindent {\bf Concluding remark.} For the Jackson $\mathfrak{sl}_2$ given in Example \ref{jack}, we observe that 
\begin{align*}
    \alpha ([e, f]) = \frac{1+q}{2} \alpha (h) = \frac{q (1+q)}{2} h ~~~ \text{ and } ~~~~ [\alpha (e) ,\alpha (f) ] = q^3 [e, f] = \frac{q^3 (1+q)}{2} h.
\end{align*}
Hence $\alpha ([e, f])  \neq [\alpha (e),\alpha (f)]$ in general which shows that the Jackson $\mathfrak{sl}_2$ is not a multiplicative Hom-Lie algebra. Thus, all the constructions made in this paper do not apply to this particular example. In future, we aim to construct the cup product, Fr\"{o}licher-Nijenhuis bracket and the derived bracket associated with such non-multiplicative Hom-Lie algebras.

\medskip

    \noindent {\bf Acknowledgements.} Anusuiya Baishya would like to acknowledge the financial support received from the Department of Science and Technology (DST), Government of India through INSPIRE Fellowship [IF210619]. Both authors thank the Department of Mathematics, IIT Kharagpur for providing the beautiful academic atmosphere where the research has been carried out.
    
\medskip

\noindent {\bf Conflict of interest statement.} On behalf of all authors, the corresponding author states that there is no conflict of interest.

\medskip

\noindent {\bf Data Availability Statement.} Data sharing does not apply to this article as no new data were created or analyzed in this study.

\end{document}